\DeclareSIUnit{\skalenteil}{Skt}
\newcommand\Rey{\mbox{\textit{Re}}}
\declaretheorem[name={Definition and Proposition},refname={Def.\&Prop.,Def.\&Prop.},Refname={Def.\&Prop.,Def.\&Prop.},style=plain,numberlike=cdrthm]{defnpro}
\newcommand{\refwl}[1]{\autoref{#1}\,(\nameref{#1})} 
\newcommand\nextline{\hspace{0.1cm}\\\hspace{-0.1cm}}
\title[Streamfunction-vorticity formulation for flows]{Streamfunction-vorticity formulation for incompressible viscid and inviscid flows on general surfaces}
\author[T. Brüers]{\firstname{Tim} \lastname{Brüers}}
\address{Georg August University of Göttingen, Germany}
\email{t.brueers@math.uni-goettingen.de}
\thanks{}
\author[C. Lehrenfeld]{\firstname{Christoph} \lastname{Lehrenfeld}}
\address{Georg August University of Göttingen, Germany}
\email{lehrenfeld@math.uni-goettingen.de}
\author[M. Wardetzky]{\firstname{Max} \lastname{Wardetzky}}
\address{Georg August University of Göttingen, Germany}
\email{wardetzky@math.uni-goettingen.de}
\keywords{Streamfunction-vorticity formulation, Navier--Stokes equation, Euler equation, Hodge decomposition, incompressible flows}
\subjclass{65N35; 15A15}
\begin{document}

\begin{abstract}
This paper presents a streamfunction-vorticity formulation for the Navier--Stokes and Euler equations on general surfaces.
Notably, this includes non-simply connected surfaces, on which the harmonic components of the velocity field play a fundamental role in the dynamics.
By relying only on scalar and finite-dimensional quantities, our formulation ensures that the resulting methods give exactly tangential and incompressible velocity fields, while also being pressure robust.
Compared to traditional methods based on velocity-pressure formulations, where one can only guarantee these structural properties by increasing the computational costs, this is a key advantage.
We rigorously validate our formulation by proving its equivalence to the well understood velocity-pressure formulation under reasonable regularity assumptions.
Furthermore, we demonstrate the applicability of the approach with numerical examples.
\end{abstract}

\maketitle


\section{Introduction}
The incompressible Navier--Stokes and Euler equations on surfaces model viscous and inviscid flows, usually expressed in a velocity--pressure formulation, henceforth called \emph{velocity formulation}:
\begin{align}
    &\partial_tV+\nabla_{V}V=F-\mathrm{grad}(p)-\nu\mathrm{rot}(\mathrm{curl}(V))+2\nu \kappa V, 
    \quad \mathrm{div}(V)=0.
    \label{eq:V-pstrong}
\end{align}
Here, $V$ denotes the velocity field, $p$ the static pressure, $F$ external forces, $\nu$ the kinematic viscosity, and $\kappa$ the Gaussian curvature, while $\nabla, \mathrm{grad}, \mathrm{div}, \mathrm{rot}, \mathrm{curl}$ are the corresponding surface differential operators. 
Both $p$ and $F$ are normalized by the constant fluid density. 
The terms involving $\nu$ represent the viscous forces, which vanish for the inviscid Euler equations ($\nu=0$), and the divergence constraint expresses incompressibility (or inextensibility) of the surface flow.

Numerical methods for \eqref{eq:V-pstrong} on flat domains and surfaces are well established, see e.g.\ \cite{NavierTime, Rannacher1, John, Voigt, Fries, Olashanskii, olshanskii2021inf, Nitschke2017}.  
However, the velocity formulation comes with structural challenges:  
(i) it requires vector-valued unknowns, (ii) tangentiality and incompressibility of discrete velocities can only be enforced weakly in many discretizations, and (iii) pressure robustness is lost unless special care is taken, leading to velocity errors that scale with pressure errors \cite{LINKE2014782}.  
Moreover, all methods in this class lead to saddle-point problems coupling velocity and pressure, which are computationally demanding and require delicate compatibility conditions for stability.

An alternative is the \emph{vorticity formulation}, obtained from \eqref{eq:V-pstrong} by introducing the scalar vorticity $\omega = \mathrm{curl}(V)$ and employing the Hodge decomposition
\begin{align}
  V = \mathrm{rot}(\psi) + H,
  \label{eq:HodgeIntro}
\end{align}
with a scalar streamfunction $\psi$ and a harmonic vector field $H$.
Although the harmonic component $H$ is vector-valued, its corresponding space $H_N$ is finite-dimensional, with a dimension equal to the first Betti number $b_1$ of the surface.
This makes it possible to numerically pre-compute a basis for this space that is exactly tangential and divergence-free.

This change of variables recasts the system into scalar equations for $\psi$ and $\omega$, complemented by an $b_1$-dimensional equation for $H$:
\begin{subequations}
\label{eq:evolution}
\begin{align}
    \omega &=\Delta_{\mathrm{dR}}(\psi),
    \label{eq:evolutionpsi}
    \\\partial_t\omega &=\mathrm{curl}(F)-\langle V,\mathrm{grad}(\omega)\rangle -\nu \Delta_{\mathrm{dR}}(\omega)+2\nu \mathrm{curl}(\kappa V),
    \label{eq:evolutionw}
    \\\partial_t H &=P_{H_N}(F-\omega JV-\nu \mathrm{rot}(\omega)+2\nu \kappa V).
    \label{eq:evolutionH}
\end{align}
\end{subequations}
In this system, $\Delta_{\mathrm{dR}}=-\mathrm{div}(\mathrm{grad}(\cdot))$ denotes the de Rham Laplacian, $P_{H_N}$ is the projection onto the space of harmonic fields, and the velocity $V$ is always given by the decomposition \eqref{eq:HodgeIntro}.
Together, these equations form a coupled problem consisting of two evolution equations for $\omega$ and $H$, and an elliptic (reconstruction) equation for $\psi$.

The formulation is therefore algorithmically attractive: it avoids saddle-point problems, involves primarily scalar unknowns, and yields exactly divergence-free, tangential discrete velocities.
In this formulation, the pressure decouples and is recovered \emph{a posteriori} from a Poisson problem. 
This decoupling, combined with the geometric properties of the Hodge decomposition, ensures that tangentiality, incompressibility, and pressure robustness are all satisfied by construction.

Despite its advantages, a comprehensive theory for the vorticity formulation on general surfaces has remained elusive, both in the differential geometry community (which has provided deep insights into the geometric structure of the Euler equations, see, e.g., \cite{ArnoldKhesin}) and the computational community.
The existing literature has largely sidestepped the full dynamics of the harmonic component by relying on highly restrictive assumptions of either a geometric or dynamic nature.

Geometric restrictions confined research to planar domains \cite{ChorinMarsden,dean1991iterative, achdou1992tuning, LiuDiscont, LiuWeinan, Tian, Ghadi, tezduyar1988petrov, Tezduyar, GlowinskiPironneauMultiply, MizukamiMultiply, LippkeWagnerMultiply, BeiraoDaVeiga2023, Adaketal2021, Adak2024} or simply connected surfaces \cite{nitschke_voigt_wensch_2012, Reuther2015, Reusken2018, Brandner, Brandner2021FiniteED, Gross}, special cases that allow the velocity to be expressed simply as $V=\mathrm{rot}(\psi)$. 
The widely employed dynamics restriction, in contrast, is to assume situations in which $H$ is constant in time \cite{elcott2007stable,Azencot2014,ando2015stream}. 
On general surfaces with evolving flows, however, neither type of assumption is justified, and the full dynamics of $H$ must be accounted for.

Recent advances \cite{FluidCohomology, ZhuYinChern2025}, drawing on the spectral approach of \cite{IdeaHarmonic}, provided a versatile key to lifting these limitations by deriving the evolution of $H$ as in \eqref{eq:evolutionH}.
Similarly, we lift these topological restrictions by carefully incorporating the harmonic fields into the system.

The main contribution of the present work is to establish the rigorous mathematical foundation of the vorticity formulation for both the Navier--Stokes and Euler scenarios. 
We derive the requisite weak forms of the vorticity formulation for both scenarios and prove their respective equivalence to the classical velocity formulations -- thereby validating the vorticity approach in full generality.

As a consequence, the vorticity formulation yields a computationally attractive alternative to velocity--pressure methods.  
It combines exact incompressibility, tangentiality, and pressure robustness with a reduced scalar structure, opening the door for efficient algorithms that exploit the full potential of this formulation.
We demonstrate this potential for a set of interesting examples.

\subsection*{Structure and main contributions}
The presentation of our central theoretical results starts in \Cref{sec:ScopeOfWork} with a symbolic derivation that motivates the weak vorticity formulation and illustrates its equivalence to the standard weak velocity formulation, see in particular \Cref{thm:EquiForm} and \ref{thm:EquiFormEu}. The requisite rigorous mathematical framework is then established across \Cref{sec:Preliminaries,sec:reg,sec:mixed}.\\
This framework is built in several stages. 
First, in \Cref{sec:Preliminaries}, we establish the necessary groundwork by generalizing key identities and regularity statements from the flat case to handle difficult terms in the Navier--Stokes and Euler equations.
Then, one of our main analytical contributions is the generalization of classical Navier--Stokes regularity results by Heywood and Rannacher \cite{Rannacher1} to general surfaces (\Cref{thm:regsol}) -- a result that to our knowledge is not available in the literature. 
We achieve this by extending the Stokes regularity theorem to surfaces (\Cref{thm:StokesReg}).
Finally, these analytical tools from \Cref{sec:Preliminaries,sec:reg} culminate in our main theoretical result: the rigorous proof of the equivalence between the weak vorticity and velocity formulations (\Cref{thm:EquiForm} and\ref{thm:EquiFormEu}).\\
We conclude by showcasing the practical power of our framework by presenting numerical simulations based on the vorticity formulation for general surfaces (see \Cref{sec:NumericalResults}). 
Readers only interested in implementation details can directly skip to this section.

\section{Overview of main results}
\label{sec:ScopeOfWork}
Our main objective is to develop a framework for the weak vorticity formulation of the incompressible Navier–Stokes and Euler equations, with the goal of enabling rigorous analysis and numerical methods based on it. At present, no such comprehensive theory exists for general surfaces. This stands in contrast to the weak velocity formulations, for which existence, uniqueness, and regularity results are well established in various settings (see, e.g., \Cref{sec:reg}). 
Our contribution is to establish a precise equivalence between the weak velocity and vorticity formulations, which enables the transfer of analytical results between them and paves the way for new developments in analysis and numerical methods. In this section, we state our main results and outline our approach, with detailed proofs and auxiliary statements deferred to the following sections.

\subsection{Velocity formulation for Navier-Stokes and Euler}
\label{sec:MainContri:velo}
We begin by briefly establishing the setting and notation. Let $M$ be a surface with a $C^{0,1}$ boundary. We introduce the key spaces of divergence-free velocity fields as:
\begin{align*}
    &J_{L^2}\coloneqq\{V\in H(\mathrm{div})\vert~\mathrm{div}(V)=0~\mathrm{and}~\langle V, N\rangle\vert_{\partial M}=0\},
    \\&J_{H^1_0}\coloneqq\{V\in H^1_0(\Gamma)\vert~\mathrm{div}(V)=0\}.
\end{align*}
We use standard notation for Sobolev spaces and denote the Bochner spaces by $L^p_T(B)$ for functions $f:(0,T)\to B$ (see \Cref{sec:not} for details).

To motivate and develop the weak vorticity formulations rigorously, our strategy is to construct them to be equivalent to the well-understood weak velocity formulations.
As a necessary first step, we must state the latter with precision. Both are posed on divergence-free spaces, which obviates the need for an explicit pressure term. 
The Navier--Stokes formulation is distinguished by its viscous term -- absent in the Euler equations -- which requires the symmetrized covariant derivative of the velocity field $V$, defined as
$D_V\coloneqq \frac{1}{2}(\nabla V+(\nabla V)^T)$, where $\nabla V$ is the covariant derivative of $V$.

\begin{definition}[name={[Velocity formulation]Velocity formulation {\cite[p.158]{NavierTime}, \cite[p.533]{TaylorPDE3}}}]
\label{def:VeloForm}
\nextline Let $T\in (0,\infty)$, $V_0\in J_{L^2}$ and $F\in L^2_T(H^{-1}(\Gamma))$ be given. Then:
\begin{enumerate}[i.)]
    \item $V\in L^2_T(J_{H^1_0})\cap L^\infty_T(J_{L^2})$ is a solution of the \emph{nonstationary Navier--Stokes equations in divergence-free formulation} if, for $\nu>0$, the equation
    \begin{align}
        &\partial_t(V,W)=\langle F,W\rangle-(\nabla_{V}V,W)-2\nu(D_V,\nabla W),~\forall W\in J_{H^1_0} \label{eq:NavStokes}\tag{$\mathrm{NS}_{V}$}
    \end{align}
    holds in $(C^\infty_0((0,T)))'$ and $V$ satisfies the initial condition $V_t\vert_{t=0}=V_0$.
    \item $V\in L^\infty_T(H^1(\Gamma)\cap J_{L^2})$ is a solution of the \emph{nonstationary Euler equations in divergence-free formulation} if, for $\nu=0$, the equation
    \begin{align}
        &\partial_t(V,W)=\langle F,W\rangle-(\nabla_{V}V,W),~\forall W\in H^1(\Gamma)\cap J_{L^2} \label{eq:Euler} \tag{$\mathrm{E}_{V}$}
    \end{align}
    holds in $(C^\infty_0((0,T)))'$ and $V$ satisfies the initial condition $V_t\vert_{t=0}=V_0$.
\end{enumerate}
\end{definition}

The solutions exhibit sufficient time regularity, specifically $V\in C([0,T];J_{L^2})$ (\Cref{lem:VeloForm}), ensuring that the initial conditions in the preceding definition are well-posed.
For the Navier--Stokes equations, the existence of unique weak solutions in the (no-slip) space $L^2_T(J_{H^1_0})\cap L^\infty_T(J_{L^2})$ is a classical result for flat domains \cite{Temam, NavierTime}, which can be extended to general surfaces with the techniques we develop in \Cref{sec:reg}.\\
In contrast, a corresponding well-posedness result does not hold in general for the Euler equations in the (no-penetration) space $L^\infty_T(H^1(\Gamma)\cap J_{L^2})$; existence and uniqueness typically require stronger regularity assumptions (cf.\ \Cref{thm:EulerReg}).
We adopt this particular weak definition for the Euler equations to allow for a unified treatment of both cases in our subsequent analysis.

Since the formulations above determine the velocity field, the corresponding pressure $p$ can be recovered \emph{a posteriori}, as detailed in the following proposition. 
The pressure reconstruction requires two definitions: (i) the space of functions with zero mean $L^2_0\coloneqq \{f\in L^2\vert~(f,1)=0\}$, and (ii) the formal $L^2$-adjoint of $\nabla$, denoted by $\delta$. 
This operator is defined by the relation $(\delta A,B)=(A,\nabla B)$ for all $A\in \Gamma^{(r,s+1)}_0$ and $B\in \Gamma^{(r,s)}_0$.

\begin{proposition}[name={[Pressure]Pressure (Navier--Stokes and Euler)}]
\label{prop:pres}
\nextline Let V be a weak solution to \Cref{eq:NavStokes} or \eqref{eq:Euler}, $\nu>0$ or $\nu=0$ respectively.
Then there exists a unique (distributional) pressure $p\in (C^\infty_0(M\times(0,T)))'$ solving
\begin{align}
    &\mathrm{grad}(p)=F-\partial_tV-\nabla_VV-2\nu\cdot\delta(D_V). \label{eq:DistribP} \tag{$P'$}
\end{align}
Furthermore, if $\partial_tV\in L^2_T(H^{-1}(\Gamma))$, then $p\in L^2_T(L^2_0)$ and satisfies for a.e.\ $t\in(0,T)$
\begin{align}
    &-(p_t,\mathrm{div}(W))=\langle F_t-\partial_tV_t,W\rangle-(\nabla_{V_t}V_t,W)-2\nu(D_{V_t},\nabla W),\label{eq:NavStokesP} \tag{$P$}
\end{align}
for all $W\in H^1_0(\Gamma)$.
These are the \emph{nonstationary Navier--Stokes/Euler equations in velocity-pressure formulation}.
\end{proposition}

The pressure is unique in $L^2_0$ for a.e.\ $t\in (0,T)$; in other words, it is unique up to a function of time.
The proof of \Cref{prop:pres} is given in \Cref{sec:mixed}.

\subsection{Vorticity formulation for Navier-Stokes}
\label{sec:MainContriVort}

Having precisely defined the weak velocity formulation, the next step is to motivate the corresponding weak vorticity formulation of the Navier--Stokes equations. 
The central tool for this derivation is the Hodge decomposition of $H^1(\Gamma)$ (see \Cref{thm:HodgeDecomp}), which provides a unique and $L^2$-orthogonal decomposition of a vector field into gradient, rotational, and harmonic components
\begin{align}
    H^1(\Gamma)=\mathrm{grad}(H^2\cap L^2_0)\oplus\mathrm{rot}(H^2\cap H^1_0)\oplus H^1(\Gamma)\cap H_N.
    \label{eq:HodgeAss}
    \tag{$\mathrm{As}_{\mathrm{H}}$}
\end{align}
In this decomposition, the rotational components are generated by the operator $\mathrm{rot}(\cdot)\coloneqq -J\mathrm{grad}(\cdot)$, where $J$ is the Hodge star operator, i.e., a rotation by 90°.
The final component, $H_N$, is the space of harmonic fields tangential to the boundary, defined as $H_N\coloneqq \{X\in J_{L^2}\cap H(\mathrm{curl})\vert~\mathrm{curl}(X)=0\}$.

A direct consequence of the Hodge decomposition \eqref{eq:HodgeAss} is its simplification for the velocity space $H^1(\Gamma)\cap J_{L^2}$ (which contains the no-slip space $J_{H^1_0}$). In this subspace, the gradient component vanishes due to orthogonality, leaving the decomposition
\begin{align*}
    H^1(\Gamma)\cap J_{L^2}=\mathrm{rot}(H^2\cap H^1_0)\oplus H^1(\Gamma)\cap H_N.
\end{align*}
Consequently, any velocity field $V \in H^1(\Gamma)\cap J_{L^2}$ can be expressed as $V=\mathrm{rot}(\psi)+H$ for a streamfunction $\psi\in H^2\cap H^1_0$ and a harmonic field $H\in H^1(\Gamma)\cap H_N$.

However, a key motivation for our weak vorticity formulation is to enable low-order numerical methods, specifically Galerkin approximations with piecewise constant velocity fields. 
This requires moving beyond the classical Hilbert space setting to a more general formulation posed in the larger velocity spaces of the form $L^s(\Gamma)\cap J_{L^2}$. 
For the sake of a clear presentation, the precise range of possible exponents $s$ is deferred to the discussion below \Cref{def:HarmStream}.
To obtain such velocity fields, $\psi$ and $H$ must accordingly be sought in the spaces $W^{1,s}\cap H^1_0$ and $L^s(\Gamma)\cap H_N$, respectively.

To derive a formulation that is well-posed in these general $L^s(\Gamma)$ spaces, we now symbolically derive the weak vorticity formulation of the Navier--Stokes equations, inspired by ideas from \cite{Navier} and \cite{FluidCohomology,ZhuYinChern2025} for vorticity formulations.\\
We start from the momentum balance equation for the velocity-pressure formulation \eqref{eq:NavStokesP} and rewrite both the convective and viscous terms using two key vector calculus identities on surfaces:
\begin{align*}
    \nabla_{V}V&=\omega JV+\frac{1}{2}\mathrm{grad}(\vert V\vert^2),~2\delta D_V=\mathrm{rot}(\mathrm{curl}(V))-2\kappa V.
\end{align*}
We refer to \refwl{lem:Nonlinterm} and \refwl{thm:IbPDV} for a detailed discussion of these identities.
After substituting these identities into the momentum balance, we introduce the vorticity $\omega_t\coloneqq \mathrm{curl}(V_t)$.
This yields the following weak form for a.e.\ $t\in(0,T)$
\begin{align}
    (\partial_t V_t,W)=(F_t-\mathrm{grad}(p_t)-\omega_t JV_t-\frac{1}{2}\mathrm{grad}(\vert V_t\vert^2)-\nu\mathrm{rot}(\omega_t)+2\nu \kappa V_t,W),\label{eq:proofLsEq}
\end{align}
for all $W \in L^s(\Gamma)$.
This form of the equation highlights three important aspects: (i) the diffusion term explicitly introduces the Gaussian curvature $\kappa$, revealing how viscosity behaves differently on curved surfaces, (ii) the presence of the $\mathrm{rot}(\omega_t)$ term indicates that for the formulation to be well-defined against $L^s$ test functions, the vorticity $\omega_t$ must possess at least $W^{1,r}$ regularity (where $r$ is the dual exponent of $s$), which in turn imposes a regularity requirement on the velocity $V$, and (iii) establishing the existence of a pressure gradient $\mathrm{grad}(p_t)$ in the appropriate dual space -- which is not guaranteed a priori for a pressure merely in $L^2_T(L^2)$ -- requires that all other terms in the equation are properly controlled. 
The latter requirement necessitates, making realistic regularity assumptions for terms like $\partial_t V$ and $F$, and carefully controlling the remaining terms in the equation.

Next, we symbolically derive the weak forms of equations \eqref{eq:evolutionpsi}-\eqref{eq:evolutionH} by testing \eqref{eq:proofLsEq} with specific subsets of $L^s(\Gamma)$.\\
To eliminate the pressure, we first choose a divergence-free test function $W = \mathrm{rot}(\phi)+Z$, where $\phi\in W^{1,s}\cap H^1_0$ and $Z\in L^s(\Gamma)\cap H_N$. 
By construction, this function is orthogonal to all gradient fields.
The momentum balance thus simplifies for a.e.\ $t\in(0,T)$ to
\begin{align*}
    (\partial_t V_t,W)&=(F_t-\omega_tJV_t-\nu
    \mathrm{rot}(\omega_t)+2\nu \kappa V_t,W).
\end{align*}
Rewriting the time-derivative term using the identities $(\partial_t V,\mathrm{rot}(\phi))=\partial_t(\omega,\phi)$ and $(\partial_tV,Z)=\partial_t(H,Z)$ yields the dynamic evolution equations for the vorticity $\omega$ and the harmonic field $H$, which are the weak forms of \Cref{eq:evolutionw,eq:evolutionH}.

Second, to recover the pressure, we test \eqref{eq:proofLsEq} with a gradient field, $W = \mathrm{grad}(q)$, for $q\in W^{1,s}$.
Since the velocity field $V$ is orthogonal to this test function, so is $\partial_t V$.
This reduces the balance to an equation for the gradient terms, for a.e.\ $t\in(0,T)$
\begin{align*}
    0 &=(F_t-\mathrm{grad}(p_t)-\omega_tJV_t-\frac{1}{2}\mathrm{grad}(\vert V_t\vert^2)-\nu\mathrm{rot}(\omega_t)+2\nu \kappa V_t, \mathrm{grad}(q)).
\end{align*}
This yields a recovery equation for the total pressure $p^* \coloneqq p+\frac{\vert V\vert^2}{2}-\frac{\Vert V\Vert_{L^2}^2}{2\mathrm{Vol}(M)}$, where the final term, which is constant in space (but generally depends on time), ensures that $p^*$ has zero mean, i.e., $p^*_t\in L^2_0$.

Finally, the system is closed by the kinematic equation relating vorticity to velocity, which holds for a.e.\ $t\in(0,T)$
\begin{align*}
    (\omega_t,\chi)=(V_t,\mathrm{rot}(\chi)),~\forall \chi\in W^{1,r}.
\end{align*}
This equation, the weak form of \Cref{eq:evolutionpsi}, follows from the definition $\omega \coloneqq \mathrm{curl}(V)$ due to the no-slip condition on $V$.
Having derived all the component equations, we are now in a position to formally define the weak vorticity formulation.

\begin{definition}[name={[vorticity formulation NS]Harmonic streamfunction-vorticity formulation (Navier--Stokes)}]
\label{def:HarmStream}
\nextline Let $s\in (2,\infty)$ with dual exponent $r\in(1,2)$, i.e., $\frac{1}{s}+\frac{1}{r}=1$ and $T\in(0,\infty)$.
Further, let $\psi_0\in H^1_{0}$, $H_0\in H_N$ and $F\in L^2_T(L^r(\Gamma))$ be given.
We introduce the quantities:
\begin{align}
    &(\psi,H)\in L^2_T(W^{1,s}\cap H^1_0) \times L^2_T(L^s(\Gamma)\cap H_N),\nonumber \\
    &\omega\in L^2_T(W^{1,r})\cap L^\infty_T(L^r),~p^*\in L^2_T(W^{1,r}\cap L^2_0), \nonumber
\end{align}
such that $V\coloneqq \mathrm{rot}(\psi)+H$ satisfies $\partial_tV\in L^2_T(L^r(\Gamma))$ and the relation
\begin{align}
    (\omega,\cdot)=&~(V,\mathrm{rot}(\cdot))\in L^2_T((W^{1,r})').
    \label{eq:001}
    \tag{$\mathrm{NS}^1_{\omega}$}
    \intertext{The tuple $(\psi,H,\omega,p^*)$ solves the \emph{harmonic streamfunction-vorticity formulation} if:}
    \partial_t(\omega,\cdot)+\partial_t(H,*)=&~(F-\omega JV-\nu \mathrm{rot}(\omega)+2\nu \kappa V,\mathrm{rot}(\cdot)+*)
    \label{eq:002}
    \tag{$\mathrm{NS}^2_{\omega}$}
    \\&~\in L^2_T\left(\left(W^{1,s}\cap H^1_0\right)'\times \left(L^s(\Gamma)\cap H_N\right)'\right),\nonumber
    \\(\mathrm{grad}(p^*),\mathrm{grad}(\cdot))=&~(F-\omega JV-\nu \mathrm{rot}(\omega)+2\nu \kappa V,\mathrm{grad}(\cdot))\label{eq:press01}
    \tag{$\mathrm{NS}_{p^*}$}
    \\&~\in L^2_T\left((W^{1,s})'\right),\nonumber    
\end{align}
holds with $\nu>0$ and initial conditions $(\psi_0,H_0)=(\psi_t,H_t)\vert_{t=0}$.
We denote the weak Bochner derivative of $(\omega,\cdot)\in L^2_T((W^{1,s}\cap H^1_{0,c})')$ and $(H,*)\in L^2_T((L^s(\Gamma)\cap H_N)')$ by $\partial_t(\omega,\cdot)$ and $\partial_t(H,*)$, respectively.
\end{definition}

\begin{remark}
We make two remarks on this definition:\\
(i) First, on the well-definedness of the formulation.
Similar to the velocity formulations defined previously, the temporal regularity is defined exclusively in terms of the $L^2_T$ and $L^\infty_T$ Bochner spaces, as this choice simplifies the subsequent proof of equivalence.
The fact that the equations are well-defined and that the initial conditions are attained under these assumptions is formally proven in \Cref{lem:HarmStreamVort}.
This lemma also justifies the condition $2<s<\infty$, which is required to control the convective term within the assumed $L^2_T$ space. 
The spatial well-definedness of all terms is a consequence of the chosen exponents and follows from \Cref{lem:s&r}.\\
(ii) Second, on the initial data. While the formulation is stated for given initial data $(\psi_0, H_0)$, in practice these are computed from a given initial velocity field $V_0 \in J_{L^2}$ by solving the Hodge decomposition.
\end{remark}

\begin{remark}
\label{rem:SpecCases}
We highlight two special cases where the equations simplify notably.
Let $\varphi\in W^{1,s}\cap H^1_0$ and $Z\in L^s(\Gamma)\cap H_N$ be arbitrary test functions.
\begin{itemize}
    \item If the Gaussian curvature $\kappa$ is constant, the $L^2$-orthogonality $\mathrm{rot}(H^1_0)\perp H_N$ allows us to decouple the curvature term
    \begin{align*}
        2\nu(\kappa V, \mathrm{rot}(\varphi)+Z)=2\nu \kappa(\mathrm{rot}(\psi),\mathrm{rot}(\varphi))+2\nu \kappa (H,Z).
    \end{align*}
    \item If the surface is closed ($\partial M=\emptyset$), the term $(\mathrm{rot}(\omega),Z)$ vanishes.
    This simplifies the viscous term to
    \begin{align*}
        (\mathrm{rot}(\omega),\mathrm{rot}(\varphi)+Z)=(\mathrm{rot}(\omega),\mathrm{rot}(\varphi)).
    \end{align*}
\end{itemize}
\end{remark}

The preceding symbolic derivation is reversible, meaning a solution of the vorticity formulation can be used to construct a corresponding solution for the velocity formulation. 
This reversibility is the core idea behind our main equivalence theorem, which is one of the central results of this work.

\begin{theorem}[Equivalence of Navier--Stokes formulations]
\label{thm:EquiForm}
\nextline Let $s\in (2,\infty)$ with dual exponent $r\in(1,2)$, i.e., $\frac{1}{s}+\frac{1}{r}=1$ and $T\in(0,\infty)$.
Further assume that the $L^2$-orthogonal and unique decomposition from \Cref{eq:HodgeAss} holds.
\begin{enumerate}[1.)]
    \item Let $(V,p)$ solve the Navier--Stokes equations in velocity formulation.
    Further assume $F,\partial_t V\in L^2_T(L^r(\Gamma))$ and $\mathrm{curl}(V)\in L^2_T(W^{1,r})\cap L^\infty_T(L^r)$.
    Define $(\omega,p^*)\coloneqq\left(\mathrm{curl}(V),p+\frac{\vert V\vert^2}{2}-\frac{\Vert V\Vert_{L^2}^2}{2\mathrm{Vol}(M)}\right)$, and choose $\psi$ and $H$ uniquely such that $V=\mathrm{rot}(\psi)+H$.
    Then the tuple $(\psi,H,\omega,p^*)$ solves the Navier--Stokes equations in vorticity formulation.
    \label{Velo->Vort}
    \item Let $(\psi,H,\omega,p^*)$ solve the Navier--Stokes equations in vorticity formulation, then the tuple $(V,p)\coloneqq\left(\mathrm{rot}(\psi)+H,p^*-\frac{\vert V\vert^2}{2}+\frac{\Vert V\Vert_{L^2}^2}{2\mathrm{Vol}(M)}\right)$ solves the Navier--Stokes equations in velocity formulation.
\end{enumerate}
\end{theorem}

\begin{remark}
The additional regularity assumed for $F$, $\partial_tV$, and $\mathrm{curl}(V)$ in 1.) is necessary for the preceding symbolic derivation to hold.
We note that these requirements are naturally satisfied by the solutions whose regularity is established in \Cref{thm:regsol} below, provided the external force $F$ possesses sufficient regularity.
\end{remark}

While the symbolic derivation provides the blueprint for the proof, a rigorous argument requires careful handling of all regularity details. 
For instance, precisely controlling the term $\mathrm{grad}(\vert V\vert^2)$ to show it possesses the required time regularity is a non-trivial step under the given assumptions. 
This control is fundamentally important for establishing the appropriate regularity of the pressure. 
For this reason, we provide a full, detailed proof of \Cref{thm:EquiForm} in \Cref{sec:mixed}, where all such technical subtleties are addressed.

\subsection{Vorticity formulation for Euler}
\label{sec:MainContriVortEu}

We now turn our attention to the Euler equations ($\nu=0$). 
As in the viscous case, we motivate the corresponding weak vorticity formulation by symbolically deriving it. 
The Hodge decomposition \eqref{eq:HodgeAss} remains the backbone of this approach. 
Consequently, the variables $(\psi,H,\omega,p^*)$ are defined analogously to the Navier-Stokes setting.

Before deriving the corresponding weak equations, we highlight a crucial difference between the Navier--Stokes and Euler cases: the boundary conditions for the velocity field. Navier--Stokes solutions satisfy a no-slip condition ($V\vert_{\partial M}=0$), while Euler solutions satisfy only a no-penetration condition ($\langle V, N \rangle\vert_{\partial M}=0$).\\
This difference directly impacts the weak kinematic equation relating vorticity to velocity. 
For the Euler case, deriving the weak form of $\omega \coloneqq \mathrm{curl}(V)$ generates a $\langle V,T \rangle\vert_{\partial M}$ term. 
Since this term is not constrained by the no-penetration condition, we must restrict the test functions to $W^{1,r}_0$ to ensure the boundary term vanishes.
Therefore, the kinematic equation takes for a.e.\ $t\in(0,T)$ the form
\begin{align*}
    (\omega_t,\varphi)=(V_t,\mathrm{rot}(\varphi))=(\mathrm{rot}(\psi_t),\mathrm{rot}(\varphi)),~\forall \varphi\in W^{1,r}_0.
\end{align*}
This is the weak form of \Cref{eq:evolutionpsi}.
Note that the required test space $W^{1,r}_0$ is a proper subspace of the space $W^{1,r}$ used in the Navier--Stokes case \eqref{eq:001}. 
Furthermore, unlike the Navier--Stokes case (unless $\partial M = \emptyset$), we can no longer guarantee that the vorticity has zero mean ($\omega_t \notin L^1_0$ in general).

A key motivation for developing weak vorticity formulations is their suitability for simple Galerkin approximations, particularly using low-order elements.
However, the fact that the Euler kinematic equation uses a test space ($W^{1,r}_0$) which is a proper subspace of its Navier-Stokes counterpart ($W^{1,r}$) creates a challenge.
For a well-posed Galerkin formulation, reducing the dimension associated with one equation can lead to an ill-posed system (an observation also made from different analytical viewpoints, see, e.g., \cite[Ch 1.3]{ChorinMarsden}). 
Therefore, for our unified framework to be stable, the test space for another equation must be correspondingly larger.
Since the dimension of the harmonic space $H_N$ is fixed by topology ($\mathrm{dim}(H_N)=b_1$) and the pressure $p^*$ is recovered a posteriori, the necessary change must occur within the weak form of the vorticity evolution equation \eqref{eq:evolutionw}.

We now derive the weak forms of the evolution equations \eqref{eq:evolutionw}, \eqref{eq:evolutionH} and the pressure recovery equation for the Euler case ($\nu=0$). The process largely mirrors the Navier--Stokes derivation, testing \eqref{eq:proofLsEq} (with $\nu=0$) against specific subspaces of $L^s(\Gamma)$.

The derivations for the harmonic field $H$ and the pressure $p^*$ proceed exactly as in the Navier--Stokes case. 
This yields the Euler analogs to the pressure equation \Cref{eq:press01} and the harmonic component of \Cref{eq:002}.

However, the derivation for the vorticity evolution -- the remaining component of \Cref{eq:002} -- requires adjustment. 
Testing with $W=\mathrm{rot}(\phi)$ for $\phi\in W^{1,s}\cap H^1_0$ and applying Green's formula leads, after rearrangement, for a.e.\ $t\in(0,T)$ to
\begin{align*}
    \partial_t(\omega_t,\phi) = (\mathrm{curl}(F_t) - V_t(\omega_t),\phi),
\end{align*}
where $V(\omega)\coloneqq\langle V,\mathrm{grad}(\omega)\rangle$.
As anticipated, to ensure a well-posed Galerkin system, we must extend the test space for this vorticity evolution equation. 
Assuming sufficient regularity for $\mathrm{curl}(F)$ and $\partial_t \mathrm{curl}(V)$, the final, larger test space is determined by the term with the lowest regularity: the convective transport $V(\omega)$. This term belongs only to $L^{\bar{r}}$ for some $\bar{r} \in (1,r)$ (as shown in the regularity proof for \Cref{thm:EquiFormEu} below).
The extension is justified by a density argument, relying on the fact that $W^{1,s}\cap H^1_0$ is dense in $L^{\bar{s}}$, where $\bar{s}$ denotes the dual exponent of $\bar{r}$, i.e., $\frac{1}{\overline{r}}+\frac{1}{\overline{s}}=1$.

Having established the structure of the component equations, we can now formally define the weak vorticity formulation for the Euler equations.

\begin{definition}[name={[vorticity formulation (E)]Harmonic streamfunction-vorticity formulation (Euler)}]
\label{def:HarmStreamEu}
\nextline Let $s\in (2,\infty)$ with dual exponent $r\in(1,2)$, i.e., $\frac{1}{s}+\frac{1}{r}=1$ and $T\in(0,\infty)$.
Further assume that $M$ satisfies the embedding
\begin{align}
    H_0(\mathrm{div})\cap H(\mathrm{curl})\hookrightarrow H^1(\Gamma)
    \label{ass:EulerForH1}
    \tag{$\mathrm{As}_{\mathrm{E}}$}
\end{align}
and let $H_0\in H_N$, $\omega_0\in L^r$ and $F\in L^2_T(L^r(\Gamma))$, such that $\mathrm{curl}(F)\in L^2_T(L^r)$, be given.
We introduce the quantities:
\begin{align}
    &(\psi,H)\in L^2_T(W^{1,s}\cap H^1_0)\times L^2_T(L^s(\Gamma)\cap H_N),\nonumber
    \\&\omega\in L^2_T(W^{1,r})\cap L^\infty_T(L^2),~p^*\in L^2_T(W^{1,r}\cap L^2_0),\nonumber
\end{align}
such that $V\coloneqq \mathrm{rot}(\psi)+H$ satisfies $\partial_tV,\partial_t\omega \in L^2_T(L^r(\Gamma))$ and
\begin{align}
    (\omega,\cdot)=&(\mathrm{rot}(\psi),\mathrm{rot}(\cdot))\in L^2_T(W^{-1,s}).
    \label{eq:001Eu}
    \tag{$\mathrm{E}^1_{\omega}$}
    \intertext{The tuple $(\psi,H,\omega,p^*)$ solves the \emph{harmonic streamfunction-vorticity formulation} if:}
    \partial_t\omega=&~\mathrm{curl}(F)-V(\omega)
    \in L^2_T(L^{\overline{r}}),\label{eq:002Eu}
    \tag{$\mathrm{E}^2_{\omega}$}
    \\\partial_t (H,\cdot)=&~(F-\omega JV,\cdot)
    \in L^2_T((L^s(\Gamma)\cap H_N)'),\label{eq:003Eu}\tag{$\mathrm{E}^3_{\omega}$}
    \\(\mathrm{grad}(p^*),\mathrm{grad}(\cdot))=&~(F-\omega JV,\mathrm{grad}(\cdot))\in L^2_T\left((W^{1,s})'\right),
    \label{eq:press01Eu}\tag{$\mathrm{E}_{p^*}$}
\end{align}
holds for some $\overline{r}\in (1,r)$
and initial conditions $(H_0,\omega_0)=(H_t,\omega_t)\vert_{t=0}$.
We denote the weak Bochner derivative of $(H,\cdot)\in L^2_T((L^s(\Gamma)\cap H_N)')$ by $\partial_t(H,\cdot)$.
For closed surfaces ($\partial M=\emptyset$), we also require the additional condition $\omega\in L^2_T(L^1_0)$.
\end{definition}

\begin{remark}
We make two remarks on this definition:\\
(i) First, on the well-definedness of the formulation.
Similar to the velocity formulations defined previously, the temporal regularity is defined exclusively in terms of the $L^2_T$ and $L^\infty_T$ Bochner spaces, as this choice simplifies the subsequent proof of equivalence.
The fact that the equations are well-defined and that the initial conditions are attained under these assumptions is formally proven in \Cref{lem:HarmStreamVortEu}.
Crucially, due to the weaker no-penetration boundary condition (compared to the no-slip condition in Navier--Stokes), the additional assumption \eqref{ass:EulerForH1} (see \Cref{thm:H0divHcurlInH1}) is necessary to ensure the initial conditions are well-defined.
The spatial well-definedness of all terms is a consequence of the chosen exponents and follows from \Cref{lem:s&r} and the Sobolev embedding theorem.\\
(ii) Second, on the initial data.
Unlike the Navier–Stokes case, where $(\psi,H)$ are specified, the initial conditions here are prescribed for the pair $(H,\omega)$.
While the formulation is stated for given initial data $(H_0,\omega_0)$, in practice these are computed from a given initial velocity field $V_0 \in J_{L^2}$ with $\mathrm{curl}(V_0)\in L^r$ by solving the Hodge decomposition and computing the $\mathrm{curl}$.
\end{remark}

Similar to the Navier--Stokes case, the preceding symbolic derivation is reversible. This reversibility underpins our main equivalence theorem, a central result of this work.

\begin{theorem}[Equivalence of Euler formulations]
\label{thm:EquiFormEu}
\nextline Let $s\in (2,\infty)$ with dual exponent $r\in(1,2)$, i.e., $\frac{1}{s}+\frac{1}{r}=1$ and $T\in(0,\infty)$.
Further assume that the $L^2$-orthogonal and unique decomposition from \Cref{eq:HodgeAss} holds.
\begin{enumerate}[1.)]
    \item Let $(V,p)$ solve the Euler equations in velocity formulation.
    Further assume $F,\partial_t V\in L^2_T(L^r(\Gamma))$, $\mathrm{curl}(F),\partial_t\mathrm{curl}(V)\in L^2_T(L^r)$ and $\mathrm{curl}(V)\in L^2_T(W^{1,r})$.
    Define $(\omega,p^*)\!\coloneqq\!\left(\!\mathrm{curl}(V),p\!+\!\frac{\vert V\vert^2}{2}\!-\!\frac{\Vert V\Vert_{L^2}^2}{2\mathrm{Vol}(M)}\!\right)$, and choose $\psi$ and $H$ uniquely such that $V=\mathrm{rot}(\psi)+H$.
    Then the tuple $(\psi,H,\omega,p^*)$ solves the Euler equations in vorticity formulation.
    \item Let $(\psi,H,\omega,p^*)$ solve the Euler equations in vorticity formulation, then the tuple $(V,p)\coloneqq\left(\mathrm{rot}(\psi)+H,p^*-\frac{\vert V\vert^2}{2}+\frac{\Vert V\Vert_{L^2}^2}{2\mathrm{Vol}(M)}\right)$ solves the Euler equations in velocity formulation.
\end{enumerate}
\end{theorem}

\begin{remark}
    \Cref{thm:EquiFormEu} closely parallels \Cref{thm:EquiForm}, differing only in the additional regularity assumptions for $\partial_t\mathrm{curl}(V)$ and $\mathrm{curl}(F)$.
    These assumptions are strictly necessary to justify the extended test spaces identified in the preceding symbolic derivation.
    We note that these additional requirements are naturally satisfied by the solutions whose regularity is established in \Cref{thm:EulerReg} below, provided the external force $F$ possesses sufficient regularity.
\end{remark}

While the symbolic derivation provides the blueprint for this proof, a rigorous argument requires careful handling of all regularity details.
The Euler formulation shares generic analytical challenges already noted for the Navier--Stokes case, but faces additional hurdles due to the specific boundary-related adjustments discussed above.
For this reason, we provide a full, detailed proof of \Cref{thm:EquiFormEu} in \Cref{sec:mixed}, where all such technical subtleties are addressed.
\section{Preliminaries}
\label{sec:Preliminaries}

Before proceeding to the rigorous proofs of the equivalence theorems, we must first collect and generalize the necessary technical tools.
Although to our knowledge some of these results have not previously been shown for surfaces in this generality, we defer most of the proofs to the supplementary material to maintain the flow of the main argument.

We begin by focusing on the functional-analytic framework, establishing Sobolev embeddings, interpolation inequalities, and crucial estimates for the convective and diffusive terms (see \Cref{sec:TheoryEquivThm}).
Following this, we review the Hodge decomposition on general surfaces (\Cref{sec:HodgeDecomp}). This decomposition underpins the entire vorticity formulation by decomposing any tangential, divergence-free vector field into a streamfunction and a harmonic component.

\subsection{Key Estimates and Identities}
\label{sec:TheoryEquivThm}

\paragraph{General Principle.} Throughout this section and the following, we rely on the standard principle that many results for bounded domains in $\mathbb{R}^2$ carry over to compact surfaces via a partition of unity argument.
We do not repeat this justification for subsequent theorems where it applies.

We begin with a key estimate that is a specific consequence of the general Sobolev embedding theorem.
For any $A\in H^1(\Gamma^{(k,l)})$, we have
\begin{align}
    \Vert A \Vert_{L^p} &\leq C(p,M)\Vert A\Vert_{H^1},~1\leq p<\infty, \label{eq:SobEmbLp} \tag{$L^p_\leq$}
\end{align}
where $C(p,M)$ is a positive constant.
This inequality is essential for our formulation, both for handling general Sobolev scales and for controlling nonlinear terms like convection.

Next, we apply the Sobolev embedding and Hölder's inequality to analyze the vorticity space $W^{1,r}$. This analysis is necessary, for example, to justify the spatial well-definedness of the equations in the vorticity formulations.

\begin{lemma}[Embedding of $W^{1,r}$]
\label{lem:s&r}
\nextline Let $s\in (2,\infty)$ with dual exponent $r\in(1,2)$, i.e., $\frac{1}{s}+\frac{1}{r}=1$.
Then the embeddings $W^{1,r}\hookrightarrow L^q\hookrightarrow L^2$ hold for $\frac{1}{q}=\frac{1}{2}-\frac{1}{s}$.
Consequently, for any $w\in W^{1,r}$, $X\in L^2(\Gamma)$, and $Y\in L^s(\Gamma)$, the product $w\cdot \langle X,Y\rangle$ belongs to $L^1(\Gamma)$.
\end{lemma}

Another important inequality is the Gagliardo--Nirenberg interpolation inequality, which we need in the upcoming \Cref{thm:reg(V.d)V} to derive regularity results for $\vert V\vert^2$.
It follows from the corresponding results for bounded domains in $\mathbb{R}^2$ \cite{GagNirenberg1,GagNirenberg2} combined with the Poincaré inequality.
For completeness, the proof can be found in the supplementary material.

\begin{theorem}[Gagliardo--Nirenberg interpolation inequality]
\label{thm:GagNirenberg}
\nextline Let $a\in [1,\infty]$, $p\in[1,\infty)$ and $\theta\in [0,1]$ satisfy the relation $\frac{2}{p}=\frac{2}{a}-\theta$.
Then there exists a positive constant $C(p,\theta,M)$ such that for any $T\in W^{1,a}(\Gamma^{(r,s)})$, the following estimate holds
\begin{align}
    \Vert T\Vert_{L^p}\leq C(p,\theta,M)\left(\Vert \nabla T\Vert_{L^a}^\theta\Vert T\Vert_{L^a}^{1-\theta}+\Vert T\Vert_{L^a}\right).
    \label{eq:GagNirenberg}
    \tag{GN$_\leq$}
\end{align}
Furthermore, if $T\in W^{1,a}_0$, this estimate tightens to
\begin{align}
    \Vert T\Vert_{L^p}\leq C(p,\theta,M)\Vert \nabla T\Vert_{L^a}^\theta\Vert T\Vert_{L^a}^{1-\theta},
    \label{eq:GagNirenbergPoincare}
    \tag{GN$^0_{\leq}$}
\end{align}
provided that either (i) $\partial M\neq \emptyset$ and $a\in (1,\infty)$ or (ii) $\partial M=\emptyset$, $a\in [1,\infty)$ and $r=0=s$.
\end{theorem}

Next, we present a generalization of Gaffney's inequality \cite{gaffney1951harmonic}. 
For vector fields $X \in H_0(\mathrm{div})\cap H_0(\mathrm{curl})$, this result explicitly bounds the $L^2$ norm of $\nabla X$ by the norms of $\mathrm{div}(X)$ and $\mathrm{curl}(X)$. 
The proof employs the de Rham Laplacian, defined as $\Delta_{\mathrm{dR}}(V)\coloneqq \mathrm{rot}(\mathrm{curl}(V))-\mathrm{grad}(\mathrm{div}(V))$.

This inequality is of fundamental importance for the Navier--Stokes equations. 
Due to the no-slip boundary condition, the velocity field $V$ naturally belongs to this function space.
Since $V$ is also divergence-free, this estimate allows us to derive regularity based solely on its vorticity -- a crucial step in proving the equivalence of the formulations.

\begin{lemma}[Gaffney's inequality]
\label{lem:GaffneysIneq}
\nextline There exists a positive constant $C(M)$ such that for all $X\in H_0(\mathrm{div})\cap H_0(\mathrm{curl})$, the following estimates hold.
If $\partial M\neq \emptyset$, then
\begin{align}
    \Vert \nabla X\Vert_{L^2}\leq C(M) \left(\Vert\mathrm{div}(X) \Vert_{L^2}^2+\Vert\mathrm{curl}(X)\Vert_{L^2}^2\right)^\frac{1}{2}.
    \label{eq:GaffneyIneq}
    \tag{G$^0_{\leq}$}
\end{align}
If $\partial M=\emptyset$, an additional $L^2$-term is required to control $\mathrm{ker}(\mathrm{div})\cap \mathrm{ker}(\mathrm{curl})$, leading to
\begin{align}
    \Vert \nabla X\Vert_{L^2}\leq C(M) \left(\Vert X\Vert_{L^2}^2+\Vert\mathrm{div}(X) \Vert_{L^2}^2+\Vert\mathrm{curl}(X)\Vert_{L^2}^2\right)^\frac{1}{2}.
    \label{eq:GenGaffneyIneq}
    \tag{G$_\leq$}
\end{align}
In particular, these estimates imply the equality $H^1_0(\Gamma)=H_0(\mathrm{div})\cap H_0(\mathrm{curl})$.
\end{lemma}

\begin{proof}
We first prove the general estimate \eqref{eq:GenGaffneyIneq}. 
By standard density results, it suffices to show the inequality for smooth vector fields $V \in \Gamma_0$.
Using integration by parts -- noting that boundary terms vanish since $V\in \Gamma_0$ -- followed by the Weitzenböck formula on surfaces, $\delta \nabla V = \Delta_{\mathrm{dR}}V - \kappa V$ (see, e.g., \cite[p.26]{schwarz2006hodge}), we obtain
\begin{align*}
    \Vert\nabla V\Vert_{L^2}^2&=(\delta \nabla V,V)_{L^2}=(\Delta_{\mathrm{dR}}V,V)_{L^2}-(\kappa V,V)_{L^2}
    \\&=\Vert\mathrm{div}(V)\Vert_{L^2}^2+\Vert\mathrm{curl}(V)\Vert_{L^2}^2-(\kappa V,V)_{L^2}.
\end{align*}
Since $M$ is compact, the negative Gaussian curvature $-\kappa$ is bounded from above.
This implies $-(\kappa V,V)\leq K(M)\Vert V\Vert^2$, proving \Cref{eq:GenGaffneyIneq} and consequently the equality $H^1_0(\Gamma)=H_0(\mathrm{div})\cap H_0(\mathrm{curl})$.
\\
To prove the boundary estimate \eqref{eq:GaffneyIneq} when $\partial M\neq\emptyset$, we argue by contradiction.
Assume the inequality does not hold. 
Then there exists a sequence $Y_n\in H^1_0(\Gamma)$ such that $\Vert \nabla Y_n\Vert_{L^2}=1$ and $\Vert \mathrm{div}(Y_n)\Vert_{L^2}+\Vert \mathrm{curl}(Y_n)\Vert_{L^2}<\frac{1}{n}$.
By Rellich's lemma and the general estimate \eqref{eq:GenGaffneyIneq}, we extract a subsequence that is Cauchy in $\Vert \cdot\Vert_{H^1}$.
Its limit $Y\in H^1_0(\Gamma)$ must satisfy $\mathrm{div}(Y)=0=\mathrm{curl}(Y)$, but $\Vert \nabla Y\Vert_{L^2}=1$.\\
If $\partial M \in C^\infty$, this immediately contradicts \cite[Thm 3.4.4]{schwarz2006hodge}, which states that the only harmonic field vanishing on the boundary is zero.
For the general case $\partial M \in C^{0,1}$, we utilize the ambient manifold $W$ from our definition of such a surface (cf.\ \Cref{def:surface}).
We choose a larger surface $\tilde{M}$ with smooth boundary such that $M \subset \tilde{M} \subset W$. 
Extending $Y$ by zero yields a field $\tilde{Y}$ on $\tilde{M}$ that retains all key properties: $\tilde{Y}\in H^1_0(\Gamma,\tilde{M})$, $\mathrm{div}(\tilde{Y})=0$, $\mathrm{curl}(\tilde{Y})=0$, and $\Vert \nabla \tilde{Y}\Vert_{L^2}=1$.
Applying \cite[Thm 3.4.4]{schwarz2006hodge} to $\tilde{M}$ yields $\tilde{Y}=0$, again a contradiction. Thus, the estimate \Cref{eq:GaffneyIneq} must hold.
\end{proof}

A very practical consequence of Gaffney's inequality is the following corollary, which is of great importance for the Navier--Stokes case in proving the equivalence of the formulations. It allows us to relate velocity fields in $J_{H^1_0}$ to those with less regularity in $L^s(\Gamma)\cap J_{L^2}$. In particular, the ability to recover higher regularity from lower regularity assumptions is a key feature.

\begin{corollary}[Connection between $J_{H^1_0}$ and $L^s(\Gamma)\cap J_{L^2}$]
\label{cor:LsJL2ToJH10}
\nextline Let $s\in (2,\infty)$ with dual exponent $r\in(1,2)$, i.e., $\frac{1}{s}+\frac{1}{r}=1$.
Then the following equality of sets holds
\begin{align*}
    \{(V,\omega)\in &J_{H^1_0}\times L^2\vert~(\mathrm{curl}(V),\chi)=(\omega,\chi),~\forall \chi \in L^2\}
    \\&=\{(V,\omega)\in \left(L^s(\Gamma)\cap J_{L^2}\right)\times L^2\vert~(V,\mathrm{rot}(\chi))=(\omega,\chi),~\forall \chi \in W^{1,r}\}.
\end{align*}
\end{corollary}

\begin{proof}
This proof follows the same steps as \cite[Lem III.2.1]{Navier} and is provided in the supplementary material for completeness.
\end{proof}

We next present a lemma and theorem crucial for analyzing the regularity of the nonlinear term $\nabla_VV$. 
We begin with the decomposition that is central to our earlier symbolic derivation, as it introduces the vorticity while isolating a gradient component.

\begin{lemma}[name={[Nonl.\ term]Nonlinear term}]
\label{lem:Nonlinterm}
\nextline Let $V\in W^{1,p}(\Gamma)$ for $p\in \left[\frac{4}{3},\infty\right]$, then
\begin{align*}
    \nabla_VV=\mathrm{curl}(V)JV+\frac{1}{2}\mathrm{grad}\left(\vert V\vert^2\right)\in L^1(\Gamma).
\end{align*}
\end{lemma}

\begin{proof}
For smooth vector fields $V,X\in \Gamma$, we compute
\begin{align*}
    \frac{1}{2}\langle \mathrm{grad}(\vert V\vert^2),X\rangle=\frac{1}{2}X(\vert V\vert^2)=\langle \nabla_X V,V\rangle=\langle X,\nabla_VV+[(\nabla V)^T-(\nabla V)](V)\rangle.
\end{align*}
A short computation in normal coordinates shows that $[(\nabla V)^T-(\nabla V)](V)=-\mathrm{curl}(V)JV$, which proves the desired equation for $V\in \Gamma$.
Using a density argument combined with the Sobolev embedding for $p=\frac{4}{3}$ we conclude that the equality holds at least in $L^1(\Gamma)$.
All remaining cases of $p$ follow from Hölder's inequality due to the compactness of $M$.
\end{proof}

In the previous lemma, the condition $V\in W^{1,p}(\Gamma)$ guarantes that $\vert V\vert^2$ is at least in $W^{1,1}$.
More precisely, for $V \in H^1(\Gamma)$, a simple application of Hölder's inequality and the Sobolev embedding \eqref{eq:SobEmbLp} yields
$$\Vert\mathrm{grad}(\vert V\vert^2)\Vert_{L^r}\leq 2\Vert \nabla V\Vert_{L^2} \Vert V\Vert_{L^q}\leq C(M)\Vert V\Vert_{H^1}^2,$$
for any $r\in (1,2)$ and $q\in(2,\infty]$ such that $\frac{1}{r}=\frac{1}{2}+\frac{1}{q}$.
This implies $\vert V\vert^2\in W^{1,r}$.\\
However, the situation is more involved when $V \in L^2_T(J_{H^1_0})\cap L^\infty_T(J_{L^2})$ and we aim to prove that $\nabla_VV\in L^2_T(L^r(\Gamma))$. 
In this Bochner space setting, the spatial estimates from before are insufficient.
To resolve this, we need to assume additional regularity for $\mathrm{curl}(V)$.
The core idea is that via the Gagliardo--Nirenberg interpolation inequality (\Cref{thm:GagNirenberg}) we can control the problematic term $\Vert V\Vert_{L^q}$ using $\Vert \nabla V\Vert_{L^2}$. 
Then, since $\mathrm{div}(V)=0$, Gaffney's inequality (\Cref{lem:GaffneysIneq}) allows the additional vorticity regularity to control this resulting term.
We rigorously establish this result in the next theorem.

\begin{theorem}[$\vert V\vert^2$ regularity]
\label{thm:reg(V.d)V}
\nextline Let $r\in(1,2)$, $T\in(0,\infty)$, $V\in  L^2_T(J_{H^1_0})\cap L^\infty_T(J_{L^2})$ and $\mathrm{curl}(V)\in  L^2_T(W^{1,r})\cap L^\infty_T(L^r)$.
Then $\vert V\vert^2\in L^2_T(W^{1,r})$.
In particular this implies that $\nabla_VV\in L^2_T(L^r(\Gamma))$.
\end{theorem}

\begin{proof}
The proof follows the strategy outlined above; full details are provided in the supplementary material.
\end{proof}

Lastly, we present the theorem used to express the viscosity term $2\nu(D_V,\nabla W)$ in terms of vorticity and curvature.

\begin{theorem}[$\delta D_V$]
\label{thm:IbPDV}
\nextline For any $V,W\in H^1(\Gamma)$, the equality $(D_V,D_W)=(D_V,\nabla W)$ holds.
Furthermore, if $V\in J_{H^1_0}$ and $\mathrm{curl}(V)\in W^{1,r}$ for some $r\in(1,2)$, we have
\begin{align}
    2(D_V,\nabla W)=(\mathrm{rot}(\mathrm{curl}(V))-2\kappa V,W),
    \label{eq:divDV}
\end{align}
for all $W\in H^1_0(\Gamma)$.
\end{theorem}

\begin{proof}
First, for any $V,W\in \Gamma$ the identity $(D_V,D_W)=(D_V,\nabla W)$ follows from the fact that $D_V$ is symmetric.
This is easily seen for smooth fields and extends to $H^1(\Gamma)$ by density.

Next, we prove \eqref{eq:divDV} following an idea from \cite[p.89]{Navier}.
Let $W\in \Gamma_0$.
Using the first part of the proof and Green's formula, we have
\begin{align*}
    2(D_V,\nabla W)&=2(\delta(D_W),V)=(\Delta_{\mathrm{dR}}W,V)-2(\kappa W,V)
    \\&=(\mathrm{rot}(\mathrm{curl}(V)),W)-2(\kappa V,W).
\end{align*}
The second equality uses the identity $2\delta (D_W)=\Delta_{\mathrm{dR}}W - 2\kappa W$, which follows from combining $2\delta (D_W)=\delta(\nabla W)-\kappa W-\mathrm{grad}(\mathrm{div}(W))$ (see \cite[Lem 2.1; II.E]{NavStokesSphere,MembraneDyna}) with the Weitzenböck formula $\delta (\nabla W)=\Delta_{\mathrm{dR}}W-\kappa W$ \cite[p.26]{schwarz2006hodge} and the fact that $\mathrm{div}(V)=0$.
Finally, a density argument combined with the Sobolev embedding \eqref{eq:SobEmbLp} extends this result to all $W\in H^1_0(\Gamma)$.
\end{proof}

\subsection{Hodge decomposition}
\label{sec:HodgeDecomp}
We now turn to the Hodge decomposition, which forms the geometric backbone of all vorticity equations discussed in this work. 
As the earlier symbolic derivation demonstrates, this decomposition is essential for splitting the velocity field into a scalar streamfunction and a harmonic component. 
While this fundamental geometric tool is well-known, precise statements for general $H^k(\Gamma)$ spaces on surfaces with a $C^{k,1}$ boundary can be surprisingly difficult to locate in the literature.

To properly set the stage for this theorem, we first need a deeper understanding of the space of harmonic fields $H_N$. We therefore begin by deriving a stronger version of Gaffney's inequality (\Cref{lem:GaffneysIneq}) that relies on relaxed boundary conditions.

\begin{theorem}[Embedding of $H_0(\mathrm{div})\cap H(\mathrm{curl})$]
\label{thm:H0divHcurlInH1}
\nextline Let $M$ be a surface with $\partial M\in C^{k,1}$ for $k \geq 1$. 
Then the following continuous embedding holds
$$ \{V\in H_0(\mathrm{div})\cap H(\mathrm{curl})\vert~\mathrm{div}(V),\mathrm{curl}(V)\in H^{k-1}\}\hookrightarrow H^{k}(\Gamma).$$
Define the space of harmonic fields as $H_N\coloneqq \{X\in J_{L^2}\cap H(\mathrm{curl})\vert~\mathrm{curl}(X)=0\}$.
Then $H_N$ is a finite-dimensional subspace of $H^k(\Gamma)$.
\end{theorem}

\begin{proof}
For the base case $k=1$, the result for bounded domains in $\mathbb{R}^2$ is established in \cite[Prop I.3.1]{Navier}. 
Crucially, this result also holds for general Riemannian metrics, which allows the extension to surfaces via a standard partition of unity argument.
For $k\geq 2$, the result is obtained by combining the base case with higher regularity results for the Dirichlet and Neumann Laplace equations (see \cite[Thm I.1.8, I.1.10]{Navier} and \cite[Lem 3.4.7, Cor 3.4.8]{schwarz2006hodge}).
\\
To prove the finite dimensionality of $H_N$, we apply this theorem (with $k=1$) to any $H \in H_N$. 
Since $\mathrm{div}(H)=0$ and $\mathrm{curl}(H)=0$, we obtain the norm equivalence $\Vert H\Vert_{H^1}\leq C(M)\Vert H\Vert_{L^2}$.
This estimate, combined with Rellich's lemma, implies that the unit ball of $H_N$ in $L^2(\Gamma)$ is compact. 
Therefore, $H_N$ must be finite-dimensional.
\end{proof}

\begin{remark}
The dimension of the harmonic fields is determined solely by a topological invariant of the surface: $\mathrm{dim}(H_N)=b_1(M)$, where $b_1(M)$ is the first Betti number. 
Even for surfaces with only $C^{0,1}$ boundary, one can show that $H_N$ remains finite-dimensional and satisfies this property, though the arguments are more involved (see, e.g., \cite[Thm 11.1]{mitrea2001layer}).
\end{remark}

\begin{theorem}[Hodge decomposition]
\label{thm:HodgeDecomp}
\nextline Let $M$ be a surface with a $C^{k,1}$ boundary for $k\in \mathbb{N}_0$.
Then the following unique $L^2$-orthogonal decomposition holds
\begin{align*}
    &H^k(\Gamma)=\mathrm{grad}(H^{k+1}\cap L^2_0)\oplus \mathrm{rot}(H^{k+1}\cap H^1_0)\oplus H^k(\Gamma)\cap H_N.
\end{align*}
In particular, for $k=0$ this intersection is $H_N$, while for $k \geq 1$, \Cref{thm:H0divHcurlInH1} implies that $H^k(\Gamma)\cap H_N = H_N$.
\end{theorem}

\begin{proof}
For surfaces with $C^{0,1}$ boundary, the base case ($k=0$) is a consequence of \cite[Thm 8.2]{Mitrea2008}, obtained by dualizing their results for differential forms to vector fields.
When $k\geq 1$ and $V\in H^k(\Gamma)$ we use this base case to write $V=\mathrm{grad}(q)+\mathrm{rot}(\psi)+H$ for $q\in H^1\cap L^2_0$, $\psi\in H^1_0$ and $H\in H_N$.
The regularity of the harmonic component, $H\in H^k(\Gamma)$, follows immediately from \Cref{thm:H0divHcurlInH1}.
For the potentials $q$ and $\psi$, applying $\mathrm{div}$ and $\mathrm{curl}$ to $V$ reveals that they solve Neumann and Dirichlet Laplace problems, respectively, with source terms in $H^{k-1}$.
Consequently, standard elliptic regularity results for surfaces (see \cite[Thm I.1.8, I.1.10]{Navier} and \cite[Lem 3.4.7, Cor 3.4.8]{schwarz2006hodge}) imply that $q\in H^{k+1}$ and $\psi\in H^{k+1}$.
\end{proof}

\begin{remark}
We conclude with three important notes on the scope of the Hodge decomposition.\\
First, the standard $H^1(\Gamma)$ decomposition used in our main theorems (e.g., \Cref{thm:EquiForm} and \Cref{thm:EquiFormEu}) generally requires a $C^{1,1}$ boundary and does not hold for $C^{0,1}$ boundaries.\\
Second, this requirement can be relaxed in specific cases. 
For $s\in(2,4]$, stronger regularity results for the Laplace problem on $C^{0,1}$ surfaces exist \cite{GENG20122427,shen2005bounds}. 
By arguments similar to those used above, these results imply a corresponding $L^s(\Gamma)$ Hodge decomposition, which would allow proving \Cref{thm:EquiForm} for this range of $s$ without assuming a $C^{1,1}$ boundary.
We do not pursue this generalization here, but mention it for completeness.\\
Finally, if $M$ is topologically a disc with a finite number of disjoint subdiscs removed, the harmonic space $H_N$ can be parameterized by potentials in $\mathrm{rot}(H^{k+1})$.
In this special case, the entire Hodge decomposition can be expressed using purely scalar functions (see \cite[Thm I.3.2]{Navier} and \cite[Prop 5]{FluidCohomology}).
\end{remark}
\section{Regularity of the velocity formulation}
\label{sec:reg}
The equivalence theorems (\Cref{thm:EquiForm} and \ref{thm:EquiFormEu}) constitute a central result of this work, yet they rely on specific regularity assumptions for the velocity field. 
This section is dedicated to providing the rigorous justification for these conditions.

We achieve this by establishing regularity results for the velocity formulation on surfaces, generalizing classical theorems from bounded domains in $\mathbb{R}^2$. This extension necessitates the development of specific analytic tools, such as a generalized Stokes regularity theorem and a Poincaré--Morrey inequality, which are essential for handling the viscous term on general surfaces. Beyond verifying the equivalence, these results also provide the necessary functional-analytic basis for subsequent convergence analysis of the equivalent vorticity formulations.

\subsection{Regularity results for the Navier--Stokes equation}

In this section, we extend a regularity result for the Navier--Stokes equations by Heywood and Rannacher \cite[p.7-14]{Rannacher1}, which they proved for bounded domains in $\mathbb{R}^2$. However, the original proof relies on the specific characterization of the viscosity term ($2\nu(D_V,\nabla W)=\nu(\nabla V,\nabla W)$) that holds only due to the flatness, meaning it cannot be applied directly to general surfaces.

Thus, our aim is to investigate the general structure of the viscosity term and prove that the underlying techniques can be carried over to surfaces. 
We begin by stating the corresponding theorem and subsequently discuss the required adjustments for the proof.
In preparation for this, we define the finite-dimensional space of Killing fields -- fields whose flows are isometries -- as $\mathcal{K}\coloneqq\{K\in H^1(\Gamma)\vert ~D_K=0,~\langle K,N\rangle\vert_{\partial M}=0\}$. 
Note that for closed surfaces ($\partial M=\emptyset$), this space plays a fundamental role in establishing the regularity results.
We denote by $P_{\mathcal{K}}:L^2(\Gamma)\to \mathcal{K}$ the $L^2$-orthogonal projection onto this space.

\begin{theorem}[Navier--Stokes regularity]
\label{thm:regsol}
    \nextline Let $M$ have a $C^{2,1}$ boundary.
    Assume $V_0\in H^2(\Gamma)\cap J_{H^1_0}$ and $F,\partial_t F\in L^{\infty}_{\infty}(L^2(\Gamma))$ with $\partial_tF\in C([0,\infty);L^2(\Gamma))$.
    Further let $P_{\mathcal{K}}(F)\in L^1_{\infty}(L^2(\Gamma))$ if $\partial M=\emptyset$ and define $\tau_t\coloneqq \mathrm{min}\{1,t\}$.
    Then there exists a unique solution $(V,p)$ of \Cref{eq:NavStokesP} for $\nu>0$, such that the tuple $(V,\partial_tV,p,\sqrt{\tau} \partial_tV)\in C([0,\infty);H^2(\Gamma)\cap J_{H^1_0}\times J_{L^2}\times H^1\cap L^2_0\times J_{H^1_0})$ and
    \begin{align}
        &\stackrel[t\in [0,\infty)]{}{\mathrm{sup}}\left\{\Vert V_t\Vert_{H^2}+\Vert \partial_tV_t\Vert_{L^2}+\Vert p_t\Vert_{H^1}+\sqrt{\tau_t}\Vert \partial_tV_t\Vert_{H^1}\right\}\leq C(M),
        \label{Est:NS1}
        \tag{$\mathrm{NS}^1_{\mathrm{R}}$}
    \end{align}
    where $C(M)$ is a positive constant.
    In addition, $\Vert V_t-V_0 \Vert_{H^2}\to 0$ as $t\to 0$ and it holds that
    \begin{align}
        \stackrel[t\in [0,\infty)]{}{\mathrm{sup}}\left\{e^{-t}\int_0^t e^{s}\left(\Vert \partial_s V_s\Vert_{H^1}^2+\tau_s\left(\Vert\partial_s V_s\Vert_{H^2}^2+\Vert \partial_s^2 V_s\Vert_{L^2}^2+\Vert \partial_s p_s\Vert_{H^1}^2\right)\right) \mathrm{d}s\right\}\leq C(M).
        \label{Est:NS2}
        \tag{$\mathrm{NS}^2_{\mathrm{R}}$}
    \end{align}
\end{theorem}

\begin{remark}
The time weighting factors in the estimates serve two distinct purposes. 
First, the factor $\tau_t\coloneqq \mathrm{min}\{1,t\}$ is introduced to damp potentially diverging contributions near $t=0$, as uniform bounds cannot be guaranteed for general initial data.
Conversely, the exponential factor $e^{-t}$ in \Cref{Est:NS2} guarantees that the expression $\sup_{t\in [0,\infty)}e^{-t}\int_0^t e^{s}(\dots)\,\mathrm{d}s$ stays bounded as long as the integrand is bounded.
If one is only interested in estimates for finite time $T>0$, the exponential factors may be replaced by $1$.
Under these conditions, the integral estimate \eqref{Est:NS2} implies the following weak time regularity: $\partial_tV\in L^2_T(J_{H^1_0})$, $\sqrt{\tau}\partial_tV\in L^2_T(H^2(\Gamma)\cap J_{H^1_0})$, $\sqrt{\tau}\partial_t^2V\in L^2_T(J_{L^2})$, and $\sqrt{\tau}\partial_tp\in L^2_T(H^1\cap L^2_0)$.\\
Finally, we note that while Heywood and Rannacher \cite[Thm 2.3]{Rannacher1} proved this theorem for bounded domains in $\mathbb{R}^2$ with $C^2$ boundaries, the slightly stronger assumption of $C^{2,1}$ boundary regularity used here is required by the generalized Stokes regularity theorem (\Cref{thm:StokesReg}), below.
\end{remark}

Next we briefly discuss some adaptations that have to be made to the proof of Heywood and Rannacher in order to extend it to surfaces. 
We begin by detailing the necessary steps for surfaces with a boundary, i.e.\ $\partial M\neq \emptyset$.
Afterwards, we address the case $\partial M=\emptyset$.
\begin{enumerate}
    \item \textbf{Poincaré Inequality Adaptation:} The original proof frequently relies on the standard Poincaré inequality, i.e., $\Vert \cdot\Vert_{H^1}\leq C(M)\Vert \nabla\cdot\Vert_{L^2}$, which often leads to simplifications in combination with the Euclidean viscosity term $\nu(\nabla V,\nabla \cdot)$. 
    Since on surfaces the viscosity term is $2\nu(D_V, D_{\cdot})$ (see \Cref{thm:IbPDV}), we must replace the classical Poincaré inequality with a Poincaré--Morrey inequality (derived in \Cref{cor:PoincareMorrey} below), which establishes the analogous bound $\Vert \cdot\Vert_{H^1}\leq C(M)\Vert D_{\cdot}\Vert_{L^2}.$
    \item \textbf{Stokes Regularity Adaptation:} For second-order estimates, the original proof relies on controlling the $H^2$-norm by the $L^2$-norm of $P(\Delta_{\mathrm{dR}}\cdot)$, where $P:L^2(\Gamma)\to J_{L^2}$ is the orthogonal projection into $J_{L^2}$.
    This statement is a consequence of the Euclidean Stokes regularity theorem. 
    The operator $P(\Delta_{\mathrm{dR}}\cdot)$ arises naturally because the Euclidean viscosity term takes the strong form $\delta\nabla=\Delta_{\mathrm{dR}}$. 
    However, since the strong form of the viscosity term on surfaces is $2\delta D$, we require a generalized Stokes regularity theorem (derived in \Cref{thm:StokesReg} below).
    This allows us to control the $H^2$-norm by the $L^2$-norm of $P(\delta D_{\cdot})$.
\end{enumerate}
With these two adapted estimates, the exact norm estimates and bounds from the original proof can be carried over directly, which concludes the derivation for surfaces with nonempty boundaries.

Finally, we address the case of closed surfaces, i.e.\ $\partial M=\emptyset$.
The primary distinction here is that the solution space $J_{H^1_0}$ allows nontrivial elements of the finite-dimensional space of Killing fields $\mathcal{K}$. 
The presence of such fields prevents the direct application of the Poincaré--Morrey inequality and the generalized Stokes regularity theorem (established below in \Cref{cor:PoincareMorrey} and \Cref{thm:StokesReg}). 
Note that this issue does not arise when $\partial M\neq\emptyset$, as the boundary conditions inherent to $J_{H^1_0}$ force all Killing fields to vanish (see \Cref{thm:KillingField} below).

To control the contributions from Killing fields, we decompose the space as $J_{H^1_0}=\mathcal{K}\oplus J_{\mathcal{K}}^\perp$, where $J_{\mathcal{K}}^\perp\coloneqq \{V\in J_{H^1_0}\vert~V\in \mathrm{ker}(P_{\mathcal{K}})\}$ forms the $L^2$-orthogonal complement to $\mathcal{K}$.
Accordingly, we uniquely split the velocity field into $V=V_{\mathcal{K}}+V_{\perp}$.\\
By exploiting the identities $0=(\nabla_WW,K)=(D_W,\nabla K)=(\nabla_KK,W)$, which hold for any $W\in J_{H^1_0}$ and $K\in \mathcal{K}$, we can split the Navier--Stokes equations \eqref{eq:NavStokes} for a.e.\ $t\in (0,T)$ into:
\begin{align}
    \langle\partial_tV_{\mathcal{K},t},K\rangle=&(F_t,K),~\forall K\in \mathcal{K},\label{eq:VK}\tag{$\mathcal{K}$}
    \\\langle\partial_tV_{\perp,t},W\rangle=&(F_t,W)-(\nabla_{V_{\perp,t}}V_{\perp,t},W)-(\nabla_{V_{\mathcal{K},t}}V_{\perp,t},W)\nonumber
    \\&-(\nabla_{V_{\perp,t}}V_{\mathcal{K},t},W)-2\nu(D_{V_{\perp,t}},\nabla W),~\forall W\in J_{\mathcal{K}}^\perp.\label{eq:VJKperp}\tag{$\mathcal{K}^\perp$}
\end{align}
The initial conditions for $V_{\mathcal{K}}$ and $V_{\perp}$ are determined by the projection of $V_0=V_{\mathcal{K},0}+V_{\perp,0}$. Thus, it remains to show that this system for $V_{\mathcal{K}}$ and $V_{\perp}$ has a unique solution with the required regularity.

First, we observe that the equation for the Killing component \eqref{eq:VK} admits the explicit unique solution $V_{\mathcal{K},t}=V_{\mathcal{K},0}+\int_0^tP_{\mathcal{K}}(F_s)\mathrm{d}s$.
Given the assumptions on the force $F$ and the properties of Killing fields (see \Cref{thm:KillingField} below), it follows that $V_{\mathcal{K}},\partial_tV_{\mathcal{K}},\partial_t^2V_{\mathcal{K}}\in C([0,\infty);\mathcal{K})\cap L^\infty_{\infty}(\mathcal{K})$, satisfying all necessary regularity conditions.\\
Next, we address the equation for the orthogonal component \eqref{eq:VJKperp}. 
Since the space $J_{\mathcal{K}}^\perp$ admits both the Poincaré--Morrey inequality and the Stokes regularity theorem, we can proceed with the same norm estimates as in the case with boundary ($\partial M\neq \emptyset$).
The only potential difference arises from the convective terms involving $V_{\mathcal{K}}$. 
However, using the identity $(\nabla_K K, W)=0$, these terms can always be rewritten in the form $(\nabla_V V, W)$. 
This structural property, combined with the established regularity of $V_{\mathcal{K}}$, allows us to close the required estimates for $V_{\perp}$.

The remainder of this section is dedicated to establishing the Poincaré--Morrey inequality and the Stokes regularity theorem, thereby providing the rigorous justification for the adaptations discussed above. 
We begin with Korn's inequality on $J_{H^1_0}$, which serves as a fundamental prerequisite for the Poincaré--Morrey estimate.

\begin{theorem}[Korn inequality]
\label{thm:KornIneq}
\nextline There exists a positive constant $C(M)$ such that $\Vert \nabla V\Vert_{L^2}\leq C(M)\left(\Vert V\Vert_{L^2}^2+\Vert D_V\Vert_{L^2}^2\right)^{\frac{1}{2}}$ holds for all $V\in J_{H^1_0}$.
In particular, for closed surfaces ($\partial M=\emptyset$), this estimate implies that the space $\mathcal{K}\subset J_{H^1_0}$ is finite-dimensional.
\end{theorem}

\begin{proof}
Let $V\in\mathscr{V}\coloneqq\{X\in\Gamma_0\mid\mathrm{div}(X)=0\}$.
We compute
\begin{align*}
    2\Vert D_V\Vert_{L^2}^2
    =2(D_V,\nabla V)=\Vert\nabla V\Vert_{L^2}^2-(\kappa V,V)\geq\Vert\nabla V\Vert_{L^2}^2-C(M)\Vert V\Vert_{L^2}^2.
\end{align*}
We obtain the first equality via \Cref{thm:IbPDV}, while the second follows from Green's formula and the identity $2\delta (D_V)=\delta(\nabla V)-\kappa V-\mathrm{grad}(\mathrm{div}(V))$ (see \cite[Lem 2.1]{NavStokesSphere} or \cite[II.E]{MembraneDyna}).
For the final inequality, we rely on the compactness of $M$, which implies that the Gaussian curvature $\kappa$ is bounded from above by some positive constant $C(M)$.
The result extends to $J_{H^1_0}$ by density (see \Cref{thm:DecompH-1AndDensity} below).

For closed surfaces ($\partial M=\emptyset$), combining this Korn inequality with Rellich's lemma proves that the unit ball of the Killing field space $\mathcal{K}$ is compact in $L^2(\Gamma)$. Consequently, $\mathcal{K}$ is finite-dimensional.
\end{proof}

\begin{remark}
As one would expect, the space $\mathcal{K}$ remains finite-dimensional even for surfaces with non-empty boundary (see, e.g., \cite[Lemma 6]{priebe1994solvability}).
However, in this case, Killing fields do not contribute to the solution space: as shown in \Cref{thm:KillingField} 3.) below, the intersection $J_{H^1_0}\cap \mathcal{K}$ is trivial.
\end{remark}

Using Korn's inequality, we next establish the Poincaré--Morrey inequality on appropriate subspaces of $J_{H^1_0}$.

\begin{theorem}[Poincar\'e--Morrey inequality on subspaces of $J_{H^1_0}$]
\label{thm:PoincareMorrey}
\nextline Let $\mathcal{V}\subset J_{H^1_0}$ be a closed subspace such that $\mathcal{V}\cap \mathcal{K}=\{0\}$.
Then there exists a positive constant $C(M)$ such that the Poincar\'e--Morrey inequality
\begin{align}
    \Vert V\Vert_{H^1}\leq C(M)\Vert D_V\Vert_{L^2}
    \label{eq:PoincareMorrey}
    \tag{PM$_{\leq}$}
\end{align}
holds for all $V\in \mathcal{V}$.
\end{theorem}

\begin{proof}
We prove \Cref{eq:PoincareMorrey} by contradiction. 
Assume the inequality does not hold. 
Then there exists a sequence $Y_n\in \mathcal{V}$ such that $\Vert Y_n\Vert_{H^1}=1$ and $\Vert D_{Y_n}\Vert_{L^2}<\frac{1}{n}$.
By Rellich's lemma and Korn's inequality (\Cref{thm:KornIneq}), we can extract a subsequence that is Cauchy in $\Vert \cdot\Vert_{H^1}$.
Since $\mathcal{V}$ is closed, the limit $Y$ belongs to $\mathcal{V}$ and must satisfy $D_Y=0$ and $\Vert Y\Vert_{H^1}=1$.
The condition $D_Y=0$ implies $Y \in \mathcal{K}$. 
However, since $\mathcal{V}\cap\mathcal{K}=\{0\}$, we must have $Y=0$, which contradicts the normalization $\Vert Y\Vert_{H^1}=1$.
Thus, the estimate holds.
\end{proof}

Finally, we apply the general result to the specific solution spaces of the Navier--Stokes equations.

\begin{corollary}[Poincar\'e--Morrey inequality]
\label{cor:PoincareMorrey}
\nextline The Poincar\'e--Morrey inequality \eqref{eq:PoincareMorrey} holds on $J_{H^1_0}$ if $\partial M\neq \emptyset$, and on $J_{\mathcal{K}}^\perp$ if $\partial M=\emptyset$.
\end{corollary}

\begin{proof}
In both cases, the spaces $J_{H^1_0}$ and $J_{\mathcal{K}}^\perp$ are closed subspaces of $H^1(\Gamma)$. 
Furthermore, the requirement that the intersection with $\mathcal{K}$ is trivial is satisfied: for $J_{H^1_0}$ (when $\partial M \neq \emptyset$), this follows from \Cref{thm:KillingField} 3.) below, while for $J_{\mathcal{K}}^\perp$, it holds by construction. 
Thus, the result follows directly from \Cref{thm:PoincareMorrey}.
\end{proof}

\begin{remark}
For closed surfaces ($\partial M=\emptyset$), \Cref{thm:KornIneq} ensures that $\mathcal{K}$ is finite-dimensional.
This property guarantees the unique $L^2$-orthogonal decomposition $J_{H^1_0}=\mathcal{K}\oplus J_{\mathcal{K}}^\perp$.
\end{remark}

With the Poincaré--Morrey inequality established, we prove the following Stokes regularity theorem.
We observe that for closed surfaces, the variational formulation is only solvable if the force vanishes on the Killing fields, as $D$ and $\mathrm{div}$ vanish on this subspace.

\begin{theorem}[Stokes regularity theorem]
\label{thm:StokesReg}
\nextline Let $F\in H^{-1}(\Gamma)$.
In the case of closed surfaces ($\partial M=\emptyset$), we additionally assume the compatibility condition $\langle F,K\rangle=0$ for all $K\in \mathcal{K}$.
Then the problem
\begin{align}
    (D_V,D_W)-(p,\mathrm{div}(W))=\langle F,W\rangle,~\forall W\in H^1_0(\Gamma)
    \label{eq:StokesEquMfd}
\end{align}
admits a unique solution $V\in \begin{cases}J_{H^1_0},&\mathrm{if}~\partial M\neq\emptyset\\J_{\mathcal{K}}^\perp,&\mathrm{if}~\partial M=\emptyset\end{cases}$ and $p\in L^2_0$ satisfying
\begin{align*}
    \Vert V\Vert_{H^1}+\Vert p\Vert_{L^2}\leq C(M)\Vert F\Vert_{H^{-1}},
\end{align*}
for a positive constant $C(M)$.
Furthermore, if $M$ has a $C^{2,1}$ boundary and $F\in L^2(\Gamma)$, then $V\in H^2(\Gamma)$, $p\in H^1(\Gamma)$ and
\begin{align*}
    \Vert V\Vert_{H^2}+\Vert p\Vert_{H^1}\leq C(M)\Vert F\Vert_{L^2}.
\end{align*}
\end{theorem}

\begin{proof}
The existence and uniqueness of the solution pair $(V,p)$ follow from Brezzi's splitting theorem \cite[Thm III.4.3]{Braess}, which applies here due to the Poincaré--Morrey inequality (\Cref{cor:PoincareMorrey}) and the upcoming \Cref{thm:divIsom}, which we postpone to \Cref{sec:mixed}.\\
Next, we establish the higher regularity results assuming $F\in L^2(\Gamma)$. 
Our approach is inspired by \cite[Ch 7.3.3]{Grisvard} and \cite[Thm I.5.5]{Navier}, relating the Stokes solution to the biharmonic equation.
Let $W\in \Gamma_0$. 
Using the identity \eqref{eq:divDV} and Green's formula, we obtain the relation
\begin{align*}
    (\mathrm{curl}(V),\mathrm{curl}(W))=(2F+2\kappa V,W)+(2p,\mathrm{div}(W)).
\end{align*}
By density, this identity extends to all $W\in H^1_0(\Gamma)$.\\
We now employ the Hodge decomposition (\Cref{thm:HodgeDecomp}) to write $V=\mathrm{rot}(\psi)+H$, with $\psi\in H^2\cap H^1_0$ and $H\in H_N$.
Crucially, due to the $C^{2,1}$ boundary, \Cref{thm:H0divHcurlInH1} establishes $H_N \hookrightarrow H^2(\Gamma)$. 
This yields the bound $\Vert H\Vert_{H^2}\leq C(M)\Vert H\Vert_{L^2}\leq C(M)\Vert V\Vert_{L^2}$, where the last step uses the $L^2$-orthogonality of the decomposition.\\
It remains to prove that $\psi\in H^3$. To do this, we choose a test function of the form $W=\mathrm{rot}(\varphi)$ with $\varphi \in H^2_0$. Substituting this into the previous identity yields
\begin{align*}
    (\Delta_{\mathrm{dR}}\psi,\Delta_{\mathrm{dR}}\varphi)=(2F+2\kappa V,\mathrm{rot}(\varphi)), \forall\varphi \in H^2_0.
\end{align*}
In other words, $\psi$ is a weak solution to the biharmonic equation with a source term $2F+2 \kappa V$ in $L^2$ (since $F\in L^2$ and $\kappa\in L^\infty$).
Relying on standard regularity estimates for the biharmonic equation (see, e.g., \cite[Thm I.1.11]{Navier}; the generalization to surfaces with $C^{2,1}$ boundary is straightforward but tedious), we conclude that $\psi\in H^3$ and satisfies the bound $\Vert \psi\Vert_{H^3}\leq C(M)\Vert F\Vert_{L^2}$. 
This establishes the desired regularity for $V$.\\
Finally, the regularity of the pressure follows from its construction and the established regularity of $F$ and $V$, implying $p\in H^1$ with $\Vert p\Vert_{H^1}\leq C(M)\Vert F\Vert_{L^2}$.
\end{proof}

\begin{remark}
\label{rem:StokesReg}
To establish the regularity result, we reduced the Stokes problem to the biharmonic equation. 
This approach, known from \cite[Ch 7.3.3]{Grisvard} and \cite[Thm I.5.5]{Navier}, allows for a very elementary proof but has the drawback of requiring a $C^{2,1}$ boundary. 
For bounded domains in $\mathbb{R}^2$, it is established that these regularity results hold for $C^2$ boundaries (see \cite{cattabriga1961}).
Thus, we expect that the present result could be extended to $C^2$ surfaces by different techniques.
\end{remark}

\subsection{Regularity results for the Euler equation}
For the Euler equations, a substantial body of regularity theory already exists in the literature (see, e.g., \cite[Appendix III]{Temam} for an overview). 
Unlike the Navier--Stokes case, where we had to generalize planar results to surfaces, regularity results for the Euler equations on surfaces with smooth boundaries are well-established.

Therefore, we state the following theorem.
The specific formulation presented here -- including the functional setting and the treatment of the time-dependent external force -- follows the work of Temam \cite[Thm 2.1]{Temam1976}, who established the result for bounded planar domains with a $C^5$ boundary.
To justify the validity of this result on general surfaces, we refer to the geometric framework developed by Ebin and Marsden \cite{EbinMarsden1970}.
In their seminal work, they transfer the problem from the classical equations to the task of finding geodesics on the group of volume-preserving diffeomorphisms, a reformulation that allows them to apply methods from global analysis and infinite-dimensional geometry.
Although their main exposition focuses on time-independent forces for simplicity, they explicitly remark that the external force may depend on time.
We expect that their geometric techniques can be adapted to the functional setting of Temam, thereby yielding the following result for surfaces.

\begin{theorem}[Euler regularity]
\label{thm:EulerReg}
\nextline Let $M$ have a $C^\infty$ boundary.
Assume $V_0\in H^3(\Gamma)\cap J_{L^2}$ and $F\in L^1_{\infty}(H^3(\Gamma))$.
Then there exists a unique solution $(V,p)\in L^\infty_{\infty}(H^3(\Gamma)\cap J_{L^2}\times H^2\cap L^2_0)$ to \Cref{eq:NavStokesP} (with $\nu=0$) such that $V\in C([0,\infty);H^3(\Gamma)\cap J_{L^2})$ and $\Vert V_t-V_0\Vert_{H^3} \to 0$ as $t\to 0$.
Furthermore, if $F\in C([0,\infty);H^2(\Gamma))$, then the solution satisfies $(\partial_tV,p)\in C([0,\infty);H^2(\Gamma)\cap J_{L^2}\times H^3\cap L^2_0)$.
\end{theorem}
\clearpage
\section{Equivalence of velocity and vorticity formulations}
\label{sec:mixed}
Having justified the necessary regularity assumptions in the previous section, our main goal here is to provide the rigorous justification for the symbolic equivalence proofs presented in \Cref{sec:MainContriVort} and \Cref{sec:MainContriVortEu}. This involves two main tasks: verifying that the vorticity formulations (including their initial conditions) are well-defined, and proving the equivalence theorems for both the Navier--Stokes and Euler equations.

We begin, however, by addressing foundational aspects of the velocity formulation that were previously deferred to maintain the narrative flow. Specifically, we establish the continuity of the initial conditions and outline the rigorous construction of the pressure (\Cref{prop:pres}).
Note that for the latter, the full technical proof is provided only in the supplementary material to ensure readability.

\begin{lemma}[Well-defined initial condition of \Cref{def:VeloForm} (\nameref{def:VeloForm})]
\label{lem:VeloForm}
\nextline Let $V$ be a solution to the velocity formulation of the Navier--Stokes \eqref{eq:NavStokes} or Euler \eqref{eq:Euler} equations. Then $V\in C([0,T],J_{L^2})$.
\end{lemma}

\begin{proof}
For the Navier--Stokes case, applying the Gagliardo--Nirenberg interpolation inequality \eqref{eq:GagNirenberg} (with $p=4$ and $a=2$) shows that the right-hand side of \Cref{eq:NavStokes} lies in $L^2_T((J_{H^1_0})')$ (cf.\ \cite[p.157-159]{NavierTime} for bounded domains in $\mathbb{R}^2$). Thus, identifying $\partial_t V$ with this expression in the sense of the weak Bochner derivative for Gelfand triples, we obtain
\begin{align}
    \partial_tV= F-(\nabla_{V}V,\cdot)-2\nu(D_V,\nabla \cdot) \in L^2_T((J_{H^1_0})').
    \label{eq:dtVNavStokes}
    \tag{$\mathrm{NS}_{\partial_tV}$}
\end{align}
This regularity implies that $V\in C([0,T],J_{L^2})$, ensuring that the evaluation at $t=0$ is well-defined.\\
The same argument applies to the Euler equation \eqref{eq:Euler}, with the modification that the derivative lies in a different dual space
\begin{align}
    \partial_tV= F-(\nabla_{V}V,\cdot)\in L^2_T((H^1(\Gamma)\cap J_{L^2})').
    \label{eq:dtVEuler}
    \tag{$\mathrm{E}_{\partial_tV}$}
\end{align}
This yields the corresponding continuity result for the inviscid case.
\end{proof}

We now turn to the proof of Proposition \refwl{prop:pres}, which asserts the existence and regularity of the pressure $p$. 
Establishing this requires two fundamental results regarding the divergence operator. 
The first shows that $\mathrm{div}$ is an isomorphism on an appropriate subspace, while the second characterizes functionals that vanish on divergence-free fields. 
Since the proofs of these statements for bounded domains in $\mathbb{R}^2$ rely on standard Banach space arguments, they generalize directly to surfaces.

\begin{theorem}[Divergence is an isomorphism]
\label{thm:divIsom}
\nextline Let $H^1_0(\Gamma)=J_{H^1_0}\oplus (J_{H^1_0})^\perp$ be the orthogonal decomposition with respect to the inner product $(\nabla\cdot,\nabla\cdot)$.
Then the divergence operator induces an isomorphism $\mathrm{div}:(J_{H^1_0})^\perp\to L^2_0$.
\end{theorem}

\begin{proof}
For bounded domains in $\mathbb{R}^2$, this is proven in \cite[Cor I.2.4]{Navier}.
\end{proof}

\begin{theorem}[Density results and Pressure]
\label{thm:DecompH-1AndDensity}
\nextline Let $\mathscr{V}\coloneqq \{V\in\Gamma_0\mid\mathrm{div}(V)=0\}$. If $F\in H^{-1}(\Gamma)$ satisfies $\langle F, W\rangle = 0$ for all $W\in \mathscr{V}$, then there exists a unique $p\in L^2_0$ such that $\langle F,\cdot\rangle= -(p,\mathrm{div}(\cdot))$.
In particular, this implies the density results $J_{L^2}=\overline{\mathscr{V}}^{\Vert\cdot\Vert_{L^2}}$ and $J_{H^1_0}= \overline{\mathscr{V}}^{\Vert\cdot\Vert_{H^1}}$.
\end{theorem}

\begin{proof}
For bounded domains in $\mathbb{R}^2$, the existence of the pressure is established in \cite[Thm I.2.3]{Navier}, while the density results are proven in \cite[Cor I.2.5, Thm I.2.8]{Navier}.
\end{proof}

We are now in a position to rigorously construct the pressure as stated in \Cref{prop:pres}. We note that while this procedure is standard for the Navier--Stokes equations, its application to the Euler equations is less common in this functional setting.
However, consistent with our unified definition, this construction remains valid and applies analogously.\\

\begin{proof}[Proof of \Cref{prop:pres}]
The proof strategy corresponds to the standard Navier--Stokes approach found in \cite[p.160]{NavierTime} and relies fundamentally on \Cref{thm:divIsom} and \Cref{thm:DecompH-1AndDensity} established above. For completeness, the full detailed proof is provided in the supplementary material.
\end{proof}

\subsection{Vorticity formulation (Navier--Stokes)}
\label{sec:mixedNavStokes}
In this subsection, we complete the proof of the equivalence between the velocity and vorticity formulations for the Navier--Stokes equations.
We begin by establishing the well-definedness of the vorticity formulation in the following lemma, which serves as a necessary prerequisite for the main equivalence proof.

\begin{lemma}[Well-definedness of \refwl{def:HarmStream}]
\label{lem:HarmStreamVort}
\nextline Let $s\in (2,\infty)$ with dual exponent $r\in(1,2)$, i.e., $\frac{1}{s}+\frac{1}{r}=1$ and $T\in(0,\infty)$.
For any tuple $(\psi,H,\omega,p^*)$ satisfying the regularity assumptions of \Cref{def:HarmStream}, the following holds:
\begin{enumerate}[i.)]
    \item $(\psi,H)\in C([0,T];H^1_0\times H_N)$, ensuring that the initial conditions are well-defined.
    \label{InitCond} 
    \item The equations \Cref{eq:002,eq:press01} are well-defined.
     \label{EqWellDef}
\end{enumerate}
\end{lemma}

\begin{proof}~
\begin{enumerate}[i.)]
    \item We first establish the time continuity of the resulting velocity field $V=\mathrm{rot}(\psi)+H$.
    By construction, $V\in L^2_T(L^s(\Gamma)\cap J_{L^2})$. 
    The kinematic equation \eqref{eq:001} implies that $\mathrm{curl}(V)=\omega\in L^2_T(W^{1,r})$. 
    Applying \Cref{lem:s&r}, we find that $\omega\in L^2_T(L^2)$. Consequently, \Cref{cor:LsJL2ToJH10} applies, yielding the enhanced regularity $V\in L^2_T(J_{H^1_0})$.
    To conclude the continuity of $V$, we examine its time derivative.
    Since $\partial_tV\in L^2_T(L^r(\Gamma))$, the Sobolev Embedding Theorem (\Cref{eq:SobEmbLp}) implies $\partial_t V \in L^2_T((J_{H^1_0})')$ within the context of the Gelfand triple $J_{H^1_0}\subset J_{L^2}\subset(J_{H^1_0})'$. 
    This regularity implies that $V \in C([0,T];J_{L^2})$.\\
    Since the decomposition $V=\mathrm{rot}(\psi)+H$ is $L^2$-orthogonal, the continuity of $V$ directly implies $(\psi,H) \in C([0,T];H^1_0\times H_N)$, ensuring the initial conditions are well-defined.
    We also note that this continuity of $V$ extends to the vorticity viewed as a functional, implying $(\omega,\cdot)\in C([0,T];(H^1)')$.
    \item Equations \Cref{eq:002} and \Cref{eq:press01} are required to hold in $L^2_T$ with values in the appropriate spatial dual spaces. 
    For most linear terms, this follows immediately from Hölder's inequality. 
    We therefore focus on the three critical terms: the nonlinear convection $\omega JV$, the time derivatives, and the curvature term $\kappa V$.\\
    \textbf{Nonlinear Term:} Since $V \in C([0,T];J_{L^2})\subset L^\infty_T(J_{L^2})$ and $\omega \in L^2_T(W^{1,r})$, we can estimate the term $\omega JV$ against a test function $X\in L^s(\Gamma)$ following the reasoning in \Cref{lem:s&r}: $\Vert(\omega JV,X)\Vert_{L^2_T(\mathbb{R})}\leq \Vert\omega\Vert_{L^2_T(W^{1,r})}\Vert V\Vert_{L^\infty_T(J_{L^2})}\Vert X\Vert_{L^s}$.
    Thus, the nonlinear term lies in the required dual spaces.
    Crucially, note that this estimate requires $s\in (2,\infty)$ and would fail for smaller $s$.\\
    \textbf{Time Derivatives:} We need to show that $\partial_t(\omega,\cdot)\in L^2_T((W^{1,s}\cap H^1_0)')$ and $\partial_t (H,\cdot)\in L^2_T((L^s(\Gamma)\cap H_N)')$.
    Let $\phi\in C^\infty_0(0,T)$.
    Using \Cref{eq:001} and the fact that $\partial_tV\in L^2_T(L^r(\Gamma))$, we compute for any $\chi\in W^{1,s}$
    \begin{align*}
        \int_0^T(\omega_t,\chi)(-\partial_t\phi_t)\mathrm{d}t&=\int_0^T (V_t,\mathrm{rot}(\chi))(-\partial_t\phi_t)\mathrm{d}t=\int_0^T \langle\partial_t V,\mathrm{rot}(\chi)\rangle\phi_t\mathrm{d}t.
    \end{align*}
    Similarly, utilizing the orthogonality $\mathrm{rot}(H^1_0)\perp^{L^2}H_N$, we find for any $Z\in L^s(\Gamma)\cap H_N$
    \begin{align*}
        \int_0^T(H_t,Z)(-\partial_t\phi_t)\mathrm{d}t&=\int_0^T(V_t,Z)(-\partial_t\phi_t)\mathrm{d}t=\int_0^T\langle\partial_t V,Z \rangle\phi_t\mathrm{d}t.
    \end{align*}
    Since the spaces $W^{1,s}$ and $L^s(\Gamma)\cap H_N$ are reflexive, these identities imply the existence of weak Bochner derivatives, denoted by $\partial_t(\omega,\cdot)\in L^2_T((W^{1,s})')$ and $\partial_t(H,\cdot)\in L^2_T((L^s(\Gamma)\cap H_N)')$.  
    The embedding $(W^{1,s})' \hookrightarrow (W^{1,s}\cap H^1_0)'$ ensures the vorticity derivative has the required regularity.\\
    \textbf{Curvature Term:} Finally, since $\kappa\in L^2(\Gamma)$, the Sobolev Embedding Theorem (\Cref{eq:SobEmbLp}) allows us to bound the curvature term
    \begin{align*}
        \Vert(\kappa V,X)\Vert_{L^2_T(\mathbb{R})}\leq \Vert \kappa\Vert_{L^2}\Vert V\Vert_{L^2_T(J_{H^1_0})}\Vert X\Vert_{L^s}, \quad \forall X\in L^s(\Gamma).
    \end{align*}
\end{enumerate}    
\end{proof}

With the well-definedness of the formulation established, we are now in a position to conclude the proof of the equivalence between the velocity and vorticity formulations by deriving the required regularity statements.

\begin{proof}[Regularity proof of \refwl{thm:EquiForm}]
In this proof, we provide the rigorous justification for the symbolic derivation in \Cref{sec:MainContriVort} by establishing the missing regularity statements.
\begin{enumerate}[1.)]
    \item \label{1.)NS}
    We proceed in two steps.
    First, in \ref{RegP}.), we establish the pressure regularity $p\in L^2_T(W^{1,r}\cap L^2_0)$.
    Then, in \ref{PsiHwp}.), we define the quantities $(\psi,H,\omega,p^*)$ and verify they possess the regularity required by \refwl{def:HarmStream}.
    \begin{enumerate}[a.)]
        \item \label{RegP} Let $V\in L^2_T(J_{H^1_0})\cap L^\infty_T(J_{L^2})$ be a solution to \eqref{eq:NavStokes}. 
        We impose the additional regularity assumptions: $F,\partial_t V\in L^2_T(L^r(\Gamma))\subset L^2_T(H^{-1}(\Gamma))$) and $\mathrm{curl}(V)\in L^2_T(W^{1,r})\cap L^\infty_T(L^r)$.\\
        The pressure $p$ is defined via \refwl{prop:pres}, and thus \eqref{eq:NavStokesP} holds. 
        Utilizing the vector calculus identities established in the symbolic derivation (specifically \refwl{lem:Nonlinterm} and \refwl{thm:IbPDV}), we can rewrite the momentum equation in the form of \eqref{eq:proofLsEq}.
        Crucially, the regularity required for this step is provided by \Cref{thm:reg(V.d)V}, which ensures that the nonlinear term satisfies $\vert V\vert^2\in L^2_T(W^{1,r})$.
        Since every other term in this equation belongs to $L^2_T(L^r(\Gamma))$, it follows that the pressure gradient $\mathrm{grad}(p)$ also belongs to this space. Consequently, we conclude that $p\in L^2_T(W^{1,r}\cap L^2_0)$.
        \item \label{PsiHwp}
        \textbf{Streamfunction and Harmonic Field:} Since $V\in L^2_T(J_{H^1_0})\cap L^\infty_T(J_{L^2})$, we invoke the Hodge decomposition \eqref{eq:HodgeAss}. 
        We rely on the fact that this decomposition on $H^1(\Gamma)$ implies the existence of a corresponding unique $L^2$-orthogonal decomposition on $L^2(\Gamma)$.
        Consequently, there exist unique potentials $\psi\in L^2_T(H^2\cap H^1_0)\cap L^\infty_T(H^1_0)$ and $H\in L^2_T(H^1\cap H_N)\cap L^\infty_T(H_N)$ such that $V=\mathrm{rot}(\psi)+H$.
        The Sobolev Embedding Theorem (\Cref{eq:SobEmbLp}) then ensures that $(\psi,H)\in L^2_T(W^{1,s}\cap H^1_0)\times L^2_T(L^s(\Gamma)\cap H_N)$ for $s\in(2,\infty)$.
        Furthermore, since $V\in C([0,T];J_{L^2})$ (see \Cref{lem:VeloForm}), we conclude -- analogously to \Cref{lem:HarmStreamVort} \ref{InitCond}.) -- that $(\psi,H) \in C([0,T];H^1_0\times H_N)$. This allows us to define the initial conditions $(\psi_0,H_0)=(\psi_t,H_t)\vert_{t=0}$.
        Finally, the regularity $\partial_tV\in L^2_T(L^r(\Gamma))$ implies that the time derivatives of the components satisfy $\partial_t(\mathrm{rot}(\psi)+H)\in L^2_T(L^r(\Gamma))$.\\
        \textbf{Vorticity:} We define $\omega\coloneqq\mathrm{curl}(V)$. By assumption, $\omega\in L^2_T(W^{1,r})\cap L^\infty_T(L^r)$. Using the fact that $V\in L^2_T(J_{H^1_0})$ and the embedding $W^{1,r}\subset L^2$ (\refwl{lem:s&r}), we confirm that \eqref{eq:001} holds in $L^2_T((W^{1,r})')$, as derived in the symbolic proof.
        Regarding the time derivative, we proceed exactly as in \Cref{lem:HarmStreamVort} ii.). The regularity of $\partial_t V$ implies that the functionals satisfy $\partial_t(\omega,\cdot)\in L^2_T((W^{1,s}\cap H^1_0)')$ and $\partial_t(H,\cdot)\in L^2_T((L^s(\Gamma)\cap H_N)')$.
        This rigorously justifies the derivation of \eqref{eq:002}.\\
        \textbf{Total Pressure:} Finally, we define the total pressure $p^*\coloneqq p+\frac{\vert V\vert^2}{2}-\frac{\Vert V\Vert_{L^2}^2}{2\mathrm{Vol}(M)}$.
        Recall from step \ref{RegP}.) that $p\in L^2_T(W^{1,r}\cap L^2_0)$. 
         By applying \Cref{thm:reg(V.d)V} to control $\vert V\vert^2$, using \refwl{lem:s&r}, and noting that $V\in L^\infty_T(J_{L^2})$, we deduce that $p^*\in L^2_T(W^{1,r}\cap L^2_0)$.
    \end{enumerate}
    \item \label{2.)NS}
    Let $(\psi,H,\omega,p^*)$ be a solution to \eqref{eq:001} to \eqref{eq:press01}, and define $V\coloneqq \mathrm{rot}(\psi)+H$.
    According to \Cref{lem:HarmStreamVort}, we have $V\in L^2_T(J_{H^1_0})\cap C([0,T];J_{L^2})$.
    This ensures that $V \in L^\infty_T(J_{L^2})$ and that the initial condition $V_0\coloneqq V_t\vert_{t=0}\in J_{L^2}$ is well-defined.
    Furthermore, the regularity $\partial_t V\in L^2_T(L^r(\Gamma))$ is satisfied by definition (see \Cref{def:HarmStream}).
    Additionally, \Cref{eq:001} implies $\mathrm{curl}(V)=\omega$.
    Since the solution space for $\omega$ is $L^2_T(W^{1,r})\cap L^\infty_T(L^r)$, we conclude that $\mathrm{curl}(V)$ shares this regularity.
\end{enumerate}
\end{proof}

\begin{remark}
\label{rem:equivthm}
If only the restricted decomposition $J_{H^1_0}=\mathrm{rot}(H^2\cap H^1_0)\oplus (H^1(\Gamma)\cap H_N)$ is available (rather than the full $H^1(\Gamma)$ decomposition), the previous proof still establishes the equivalence between the velocity $V$ solving \eqref{eq:NavStokes} and the triple $(\psi,H,\omega)$ solving the kinematic and dynamic \cref{eq:001,eq:002}.
\end{remark}

\subsection{Vorticity formulation (Euler)}
In this subsection, we complete the proof of the equivalence between the velocity and vorticity formulations of the Euler equations.
Many steps in this analysis are analogous to the Navier--Stokes case discussed previously.
We therefore begin by establishing the following lemma, which guarantees the well-definedness of the vorticity formulation and serves as a prerequisite for the main equivalence result.

\begin{lemma}[Well-definedness of \refwl{def:HarmStreamEu}]
\label{lem:HarmStreamVortEu}
\nextline Let $s\in (2,\infty)$ with dual exponent $r\in(1,2)$, i.e., $\frac{1}{s}+\frac{1}{r}=1$ and $T\in(0,\infty)$.
For any tuple $(\psi,H,\omega,p^*)$ satisfying the regularity assumptions of \Cref{def:HarmStreamEu}, the following holds:
\begin{enumerate}[i.)]
    \item $(H,\omega)\in C([0,T];H_N\times L^r)$, ensuring that the initial conditions are well-defined.
    \item \Cref{eq:002Eu,eq:press01Eu} are well-defined.
\end{enumerate}
\end{lemma}

\begin{proof}
~
\begin{enumerate}[i.)]
    \item Analogously to \Cref{lem:HarmStreamVort}, we derive from \Cref{eq:001Eu} that $V \in L^2_T(L^s(\Gamma) \cap J_{L^2})$ and $\mathrm{curl}(V) = \omega \in L^2_T(W^{1,r}) \subset L^2_T(L^2)$.
    Invoking Assumption \Cref{ass:EulerForH1}, it follows that $V \in L^2_T(H^1(\Gamma) \cap J_{L^2})$.
    By the Sobolev Embedding Theorem (\Cref{eq:SobEmbLp}), we have $\partial_t V \in L^2_T((H^1(\Gamma) \cap J_{L^2})')$ within the context of the Gelfand triple $H^1(\Gamma) \cap J_{L^2} \subset J_{L^2} \subset (H^1(\Gamma) \cap J_{L^2})'$.
    This regularity implies that $V \in C([0,T]; J_{L^2})$.
    
    As in \Cref{lem:HarmStreamVort}, the orthogonality of the decomposition implies that $(\psi, H) \in C([0,T]; H^1_0) \times C([0,T]; H_N)$.
    Furthermore, the condition $\partial_t \omega \in L^2_T(L^r)$ implies $\omega \in C([0,T]; L^r)$, ensuring that the initial conditions are well-defined.
    Finally, we note that \Cref{eq:001Eu} uniquely determines the initial streamfunction $\psi_0 \coloneqq \psi_t\vert_{t=0}$ as the solution to
    $(\mathrm{rot}(\psi_0), \mathrm{rot}(\varphi)) = (\omega_0, \varphi) \quad \forall \varphi \in H^1_0.$
    \item For \Cref{eq:003Eu} and \eqref{eq:press01Eu}, the arguments from \Cref{lem:HarmStreamVort} apply directly, ensuring these equations are well-defined.
    We therefore focus our investigation on the vorticity transport equation \eqref{eq:002Eu}.
    The regularity assumptions on $\partial_t \omega$ and $\mathrm{curl}(F)$ guarantee that both terms reside in $L^2_T(L^{\bar{r}})$ for any $\bar{r} \in (1,r)$.
    
    Finally, we address the advection term $V(\omega)$.
    Since $V \in L^\infty_T(J_{L^2})$ and $\mathrm{curl}(V) \in L^\infty_T(L^2)$ (as implied by the regularity of $\omega$), Assumption \Cref{ass:EulerForH1} allows us to conclude that $V \in L^\infty_T(H^1(\Gamma) \cap J_{L^2})$.
    Combining this spatial regularity with $\omega \in L^2_T(W^{1,r})$ and invoking the Sobolev Embedding Theorem (\Cref{eq:SobEmbLp}), we obtain $V(\omega) \in L^2_T(L^{\bar{r}})$ for any $\bar{r} \in (1,r)$.
\end{enumerate}
\end{proof}

With the well-definedness of the formulation established, we are now in a position to conclude the proof of the equivalence between the velocity and vorticity formulations by deriving the missing regularity statements.
The proof proceeds analogously to that of \Cref{thm:EquiForm}.

\begin{proof}[Regularity proof of \Cref{thm:EquiFormEu}]
In this proof, we provide the rigorous justification for the symbolic derivation in \Cref{sec:MainContriVortEu} by establishing the missing regularity statements.
The structure of the proof mirrors exactly the regularity proof of the equivalence theorem (\Cref{thm:EquiForm}) for the Navier--Stokes equations.
To streamline the presentation, we refer to the corresponding steps in the proof of \Cref{thm:EquiForm} using the subscript $\mathrm{NS}$.
\begin{enumerate}[1.)]
    \item We proceed with the same two steps as in Step \ref{1.)NS}$_{\mathrm{NS}}$.
    \begin{enumerate}[a.)]
        \item \label{regPEu} Repeating the arguments given in \ref{RegP}.)$_{\mathrm{NS}}$ shows that \Cref{eq:NavStokesP} (with $\nu=0$) can be cast in the form of \Cref{eq:proofLsEq}, implying that $p \in L^2_T(W^{1,r} \cap L^2_0)$.
        The only difference stems from the fact that we cannot directly apply \Cref{thm:reg(V.d)V} due to the different boundary conditions.
        However, this poses no difficulty: we can instead utilize the estimate preceding \Cref{thm:reg(V.d)V}, which, in combination with the regularity $V \in L^\infty_T(H^1(\Gamma))$, yields $\vert V\vert^2\in L^2_T(W^{1,r})$.
        \item \label{PsiHwpEu} We repeat the arguments from Step \ref{PsiHwp}.)$_{\mathrm{NS}}$ to verify the regularity of $(\psi,H,\omega,p^*)$.
        Invoking the Hodge decomposition for $V \in L^\infty_T(H^1(\Gamma) \cap J_{L^2})$ and using the Sobolev embeddings, we obtain:
        \begin{align*}
            &\psi\in L^2_T(W^{1,s}\cap H^1_0)\cap C([0,T];H^1_0),
            \\&H\in L^2_T(L^s(\Gamma)\cap H_N)\cap C([0,T];H_N),
            \\&\partial_t(\mathrm{curl}(\psi)+H)\in L^2_T(L^r(\Gamma)),
            \\&\omega\in L^2_T(W^{1,r})\cap L^\infty_T(L^2)\cap C([0,T];L^r).
        \end{align*}
        This ensures the initial conditions $H_0\coloneqq H_t\vert_{t=0}\in H_N$ and $\omega_0\coloneqq\omega_t\vert_{t=0}\in L^r$ are well-defined.\\
        Using the regularity $V \in L^\infty_T(H^1(\Gamma))$ and the embedding $W^{1,r} \subset L^2$ (\refwl{lem:s&r}), we conclude that \Cref{eq:001Eu} holds in $L^2_T(W^{-1,s})$.
        Furthermore, noting that $\partial_t \mathrm{curl}(V)=\partial_t\omega\in L^2_T(L^r)$, we apply the arguments from \Cref{lem:HarmStreamVortEu} ii.) to show that $(\partial_t V,\cdot)=\partial_t(H,\cdot)$ in $L^2_T((L^s(\Gamma) \cap H_N)')$ and that $V(\omega)\in L^{\bar{r}}$ for any $\bar{r}\in (1,r)$.
        This justifies the derivation of \Cref{eq:002Eu,eq:003Eu}.
    
        Finally, we define $p^*\coloneqq p+\frac{\vert V\vert^2}{2}-\frac{\Vert V\Vert_{L^2}^2}{2\mathrm{Vol}(M)}$.
        Since $p\in L^2_T(W^{1,r}\cap L^2_0)$ and $\vert V\vert^2\in L^2_T(W^{1,r})$, it follows that $p^*\in L^2_T(W^{1,r}\cap L^2_0)$.
    \end{enumerate}
    \item We repeat the arguments from Step \ref{2.)NS}.)$_{\mathrm{NS}}$.
    Additionally, we utilize the property $\omega \in L^\infty_T(L^2)$ in conjunction with Assumption \Cref{ass:EulerForH1} to prove that $V \in L^\infty_T(H^1(\Gamma) \cap J_{L^2}) \cap C([0,T]; J_{L^2})$.
    Finally, in order to conclude that $\omega = \mathrm{curl}(V)$, we must explicitly use the condition $\omega \in L^2_T(L^1_0)$ in the case where $\partial M = \emptyset$.       
\end{enumerate}    
\end{proof}

\Cref{rem:equivthm} holds for the Euler case with respect to the corresponding equations.
\section{Discretization and numerical examples}
\label{sec:NumericalResults}
In this section, we translate the streamfunction-vorticity framework developed in the previous sections into a computable numerical algorithm for general surfaces. 
The method mirrors the non-discrete theory by relying on a discrete representation of the Hodge decomposition to decouple the flow components. 
As established previously, this structural approach ensures that the resulting discrete velocity fields are exactly tangential and divergence-free. 
Moreover, as the pressure decouples, the method is pressure-robust.

We proceed by first establishing an abstract discrete setting based on this underlying discrete Hodge decomposition. 
We then formulate the semi-discrete (spatially discrete) equations for both the Navier--Stokes and Euler cases. 
Subsequently, we present the fully discrete algorithms using an explicit time-stepping scheme. 
Following this, we introduce the concrete low-order finite element realization (using $\mathbb{P}^1$-based streamfunction and $\mathbb{P}^0$-velocity) employed in our implementation. 
Finally, we conclude with numerical examples that demonstrate the scheme's ability to correctly deal with surfaces with non-trivial topology, thereby validating our vorticity formulation.

\subsection{Abstract semi-discrete setting}
\label{sec:AbstractSetting}
The core of our numerical strategy relies on a discrete realization of the Hodge decomposition. 
Let $M_h$ be a triangulated approximation of the surface $M$ with a corresponding set of triangles $\mathcal{T}_h$, and let $\mathbb{X}_h$ be a finite-dimensional (Hilbert) space approximating the space of tangential vector fields $L^2(\Gamma)$.
Our formulation requires the existence of three auxiliary finite element (Hilbert) spaces -- a potential space $\mathbb{Q}_h$, a streamfunction space $\mathbb{S}_h$, and a space of harmonic vector fields $\mathbb{H}_{h,N}\subset \mathbb{X}_h$ -- which, together with discrete gradient and rotated gradient operators $\mathrm{grad}_h:\mathbb{Q}_h\to \mathbb{X}_h$ and $\mathrm{rot}_h:\mathbb{S}_h\to\mathbb{X}_h$, satisfy the unique orthogonal decomposition
$$\mathbb{X}_h = \mathrm{grad}_h(\mathbb{Q}_h) \oplus \mathrm{rot}_h(\mathbb{S}_h) \oplus \mathbb{H}_{h,N}.$$
A crucial topological requirement is that the dimension of the discrete harmonic space $\mathbb{H}_{h,N}$ matches the first Betti number of the (non-discrete) surface $M$, which is the dimension of $H_N$.

Finally, for the discretization of velocity fields in $J_{L^2}$, we restrict our attention to the subspace $\mathbb{V}_h\subset\mathbb{X}_h$ characterized by the reduced decomposition $\mathbb{V}_h=\mathrm{rot}_h(\mathbb{S}_h)\oplus\mathbb{H}_{h,N}$.
This corresponds to the kernel of $\mathrm{div}_h$, which we define via duality as $\mathrm{div}_h\coloneqq\mathrm{grad}_h^*:\mathbb{X}_h\to \mathbb{Q}_h$.
We denote the discrete space for the approximation of the vorticity $\omega$ by $\mathbb{W}_h$.
We require that $\mathbb{S}_h \subset \mathbb{W}_h$, that $1 \in \mathbb{W}_h$, and that $\mathrm{rot}_h$ extends to a mapping $\mathbb{W}_h \to \mathbb{X}_h$.
Note that we will denote all discrete inner products by $(\cdot,\cdot)_h$, and we assume that the inner products properly extends to terms of the form $fX$, where $f$ is a discrete scalar function and $X\in \mathbb{X}_h$.

Based on the abstract spaces defined above, we now present the semi-discrete (spatially discrete) streamfunction-vorticity formulations. 
Throughout this formulation, the discrete velocity field is defined by $V_h = \mathrm{rot}_h(\psi_h) + H_h$.

\paragraph{Navier--Stokes equations.}
The semi-discrete solution $(\psi_h, H_h, \omega_h)$ is defined as a Bochner function taking values in $\mathbb{S}_h \times \mathbb{H}_{h,N} \times \mathbb{W}_h$. 
This solution satisfies the following system, which is the discrete counterpart to \eqref{eq:001} and \eqref{eq:002}:
\begin{align}
    (\omega_h,\chi_h)_h&=(V_h,\mathrm{rot}_h(\chi_h))_h, 
    \tag{$\mathrm{NS}^1_{\omega_h}$}
    \label{eq:003} \\
    (\partial_t \omega_h,\varphi_h)_h+(\partial_t H_h,Z_h)_h&=(F_h-\omega_h {J}_h V_h-\nu \mathrm{rot}_h(\omega_h)+2\nu \kappa_h V_h,\mathrm{rot}_h(\varphi_h)+Z_h)_h,\label{eq:004}
    \tag{$\mathrm{NS}^2_{\omega_h}$}
\end{align}
for all test functions $(\varphi_h, Z_h, \chi_h) \in \mathbb{S}_h \times \mathbb{H}_{h,N} \times \mathbb{W}_h$. 
Here, $F_h$ is the external force represented by a Bochner function with values in $\mathbb{X}_h$, $\kappa_h$ is the discrete Gaussian curvature (assumed given) and $J_h:\mathbb{X}_h\to\mathbb{X}_h$ denotes the discrete rotation operator. 
The system is supplemented with initial conditions $(\psi_{h,0}, H_{h,0})$ obtained by decomposing the discrete initial velocity $V_{h,0} \in \mathbb{V}_h$, which is defined as the projection of the non-discrete initial velocity $V_0$ onto $\mathbb{V}_h$.

\paragraph{Euler equations.}
For the Euler equations, the spatially discrete system takes the form, mirroring the non-discrete formulation \eqref{eq:001Eu} to \eqref{eq:003Eu}:
\begin{align}
    (\omega_h, \varphi_h)_h&=(\mathrm{rot}_h(\psi_h), \mathrm{rot}_h(\varphi_h))_h,
    \tag{$\mathrm{E}^1_{\omega_h}$} 
    \label{eq:004Eu}\\
    (\partial_t\omega_h,\chi_h)_h&=(\mathrm{curl}(F)_h-V_h(\omega_h),\chi_h)_h, 
    \tag{$\mathrm{E}^2_{\omega_h}$} 
    \label{eq:005Eu} \\
    (\partial_tH_h,Z_h)_h&=(F_h-\omega_h J_h V_h,Z_h)_h,
    \tag{$\mathrm{E}^3_{\omega_h}$} 
    \label{eq:006Eu}
\end{align}
for all test functions $(\varphi_h, Z_h, \chi_h) \in \mathbb{S}_h \times \mathbb{H}_{h,N} \times \mathbb{W}_h$. 
Here, $\mathrm{curl}(F)_h$ denotes the $\mathrm{curl}$ of the external force represented by a Bochner function with values in $\mathbb{W}_h$ and $V_h(\omega_h) \coloneqq \langle V_h, \mathrm{grad}_h(\omega_h) \rangle$ denotes the discrete advection term.
Note that due to the exchanged roles of the test functions $\varphi_h$ and $\chi_h$ compared to \eqref{eq:003} and \eqref{eq:004}, we can exploit the orthogonality $\mathbb{H}_{h,N} \perp \mathrm{rot}_h (\mathbb{S}_h)$ to simplify \eqref{eq:004Eu}.
Similar to before, the system requires initial conditions $(H_{h,0},\omega_{h,0})$.
We obtain $H_{h,0}$ by extracting the harmonic component of $V_{h,0} \in \mathbb{V}_h$, while $\omega_{h,0}$ is defined as the projection of the initial vorticity $\omega_0\coloneqq \mathrm{curl}(V_0)$ onto $\mathbb{W}_h$.
In the case of a closed surface ($\partial M_h=\emptyset$), we impose the additional requirement that $\omega_{h}$ has a vanishing spatial mean, i.e.\ $(\omega_h,1)_h=0$.
This ensures consistency with the non-discrete framework, where the zero-mean property is a necessary condition for $\omega$ to be the curl of $V$.

\paragraph{Pressure.}
For both the Navier--Stokes and Euler ($\nu=0$) equations, the discrete total pressure $p^*_h\in \mathbb{Q}_h$ is recovered a posteriori from $(\psi_h,H_h,\omega_h)$ via the discrete Laplace problem
\begin{align}
    (\mathrm{grad}_h(p^*_h),\mathrm{grad}_h(q_h))_h&= (F_h-\omega_h J_h V_h -\nu\mathrm{rot}_h(\omega_h)+2\nu\kappa_h V_h,\mathrm{grad}_h(q_h))_h,
    \label{eq:press02}
    \tag{$P_h$}
\end{align}
for all test functions $q_h \in \mathbb{Q}_h$.

\subsection{Algorithms and low-order finite element realization}
\label{sec:LowOrderAlgo}

In this subsection, we discretize the semi-discrete Navier--Stokes and Euler equations in time. 
For this purpose, we employ a standard first-order explicit Euler scheme. 
We note that a similar fully discrete scheme was implemented in \cite{LiuWeinan} for the Navier--Stokes equations, though restricted to the special case of simply connected domains in $\mathbb{R}^2$ (which implies $\mathbb{H}_{h,N}=\{0\}$). 
Furthermore, our spatial procedure coincides with the one described in \cite[Algo 2]{FluidCohomology} for the Euler equations, with the distinction that they employ a higher-order Runge--Kutta scheme in time.
After stating the algorithms, we conclude the subsection by detailing the concrete low-order finite element realization used in our experiments.

\paragraph{Navier--Stokes algorithm.}
Let $\Delta t>0$ be the time step size. We denote the approximation of a variable $X$ at time $t_n=n\Delta t$ by $X^n$.\\
Given the initial data $\psi_h^0\in\mathbb{S}_h$ and $H_h^0\in\mathbb{H}_{h,N}$, we use $V_h^0=\mathrm{rot}_h(\psi_h^0)+H_h^0$ to compute $\omega_h^0$ via \eqref{eq:003}.
Thus, assuming we are given the solution $(\psi_h^n, H_h^n, \omega_h^n)$ at time $t_n$, we use $V_h^n = \mathrm{rot}_h(\psi_h^n) + H_h^n$ to define the functional $$R^n(W)\coloneqq (F_h^n-\omega_h^n J_h V_h^n-\nu\mathrm{rot}_h(\omega_h^n)+2\nu\kappa_h V_h^n,W)_h,$$ for $W\in\mathbb{V}_h$.
With this definition, we perform the step $t_n \to t_{n+1}$ as follows:
\begin{enumerate}[1.)]
    \item Compute $\psi_h^{n+1}\in\mathbb{S}_h$ by solving the Laplace problem
    \begin{align*}
        (\mathrm{rot}_h(\psi_h^{n+1}),\mathrm{rot}_h(\varphi_h))_h&=(\omega_h^n,\varphi_h)_h+\Delta t R^n(\mathrm{rot}_h(\varphi_h))_h,~\forall \varphi_h \in \mathbb{S}_h.
    \intertext{\item Compute $H_h^{n+1}\in\mathbb{H}_{h,N}$ by solving}
        (H_h^{n+1},Z_h)_h&=(H_h^n,Z_h)_h+\Delta t R^n(Z_h),~\forall Z_h\in\mathbb{H}_{h,N}.
    \intertext{\item Using $V_h^{n+1}=\mathrm{rot}_h(\psi_h^{n+1})+H_h^{n+1}$, we compute $\omega_h^{n+1}\in\mathbb{W}_h$ by solving}
        (\omega_h^{n+1},\chi_h)_h&=(V_h^{n+1},\mathrm{rot}_h(\chi_h))_h,~\forall \chi_h\in\mathbb{W}_h.
    \end{align*}
\end{enumerate}

\begin{remark}
    Similar to \Cref{rem:SpecCases}, the algorithm simplifies in specific geometric settings:
    \begin{itemize}
        \item If the discrete Gaussian curvature $\kappa_h$ is globally constant, the curvature terms decouple due to orthogonality:
        \begin{align*}
            (\kappa_h V_h^n,\mathrm{rot}_h(\varphi_h))_h&=\kappa_h(\mathrm{rot}_h(\psi_h^n),\mathrm{rot}_h(\varphi_h))_h,\\
            (\kappa_h V_h^n,Z_h)_h&=\kappa_h(H_h^n,Z_h)_h.
        \intertext{\item If the surface is closed ($\partial M_h=\emptyset$), then part of the viscous term vanishes from the harmonic update}
        (\mathrm{rot}_h(\omega_h^n),Z_h)_h&=0.
        \end{align*}
    \end{itemize}
\end{remark}

\paragraph{Euler algorithm.}
Given the initial data $H_h^0 \in \mathbb{H}_{h,N}$ and $\omega_h^0 \in \mathbb{W}_h$, where we impose a zero-mean condition on $\omega_h^0$ if $\partial M_h = \emptyset$, we solve \eqref{eq:004Eu} to find $\psi_h^0$, which implies $V_h^0=\mathrm{rot}_h(\psi_h^0) + H_h^0$.
Assuming we are given the solution $(\psi_h^n,H_h^n,\omega_h^n)$ at time $t_n$, we use $V_h^n=\mathrm{rot}_h(\psi_h^n)+H_h^n$ and perform the step $t_n\to t_{n+1}$ as follows:
\begin{enumerate}[1.)]
    \item Update $\omega_h^{n+1}\in\mathbb{W}_h$ by solving
    \begin{align*}
        (\omega_h^{n+1},\chi_h)_h&=(\omega_h^n,\chi_h)_h+\Delta t (\mathrm{curl}_h(F_h^n)-V_h^n(\omega_h^n),\chi_h)_h,~\forall \chi_h \in \mathbb{W}_h.
    \intertext{\item Update $H_h^{n+1} \in \mathbb{H}_{h,N}$ by solving}
        (H_h^{n+1},Z_h)_h&=(H_h^n,Z_h)_h+\Delta t (F_h^n-\omega_h^n J_h V_h^n,Z_h)_h,~\forall Z_h\in\mathbb{H}_{h,N}.
    \intertext{\item Recover $\psi_h^{n+1}\in\mathbb{S}_h$ by solving the Laplace problem}
        (\mathrm{rot}_h(\psi_h^{n+1}),\mathrm{rot}_h(\varphi_h))_h&=(\omega_h^{n+1},\varphi_h)_h,~\forall\varphi_h\in\mathbb{S}_h.
    \end{align*}
\end{enumerate}
Finally, we note that if $\partial M_h=\emptyset$, the update in step 1.) preserves  the vanishing mean of $\omega_h^{n+1}$ by construction.
A key distinction from the Navier--Stokes algorithm lies in the solution order: here, we explicitly update $\omega_h^{n+1}$ and $H_h^{n+1}$ before recovering $\psi_h^{n+1}$.
In the Navier--Stokes case, the computational roles of $\omega_h^{n+1}$ and $\psi_h^{n+1}$ are effectively interchanged.

\paragraph{Low-order finite element realization.}
To implement the fully discrete scheme, we select concrete finite element spaces that fulfill the structural requirements of the abstract Hodge decomposition on a given triangulation $\mathcal{T}_h$ of the polyhedral surface $M_h$. 
We adopt the following low-order setting:
\begin{itemize}
    \item $\mathbb{X}_h \coloneqq [\mathbb{P}^0(\mathcal{T}_h)]^2$: piecewise constant vector fields.
    \item $\mathbb{W}_h \coloneqq S^1(\mathcal{T}_h)$: continuous $\mathbb{P}^1$-Lagrange elements.
    \item $\mathbb{S}_h \coloneqq S^1_0(\mathcal{T}_h)$: $S^1(\mathcal{T}_h)$ elements $\varphi$ with $\varphi\vert_{\partial M_h}=0$ or zero mean if $\partial M_h=\emptyset$.
    \item $\mathbb{Q}_h \coloneqq \mathrm{CR}^1(\mathcal{T}_h) \cap L^2_0(\mathcal{T}_h)$: Crouzeix--Raviart elements with zero mean.
\end{itemize}
Throughout this realization, $(\cdot,\cdot)_h$ denotes the standard $L^2$-inner product on $M_h$.
We employ the standard element-wise gradient for $\mathrm{grad}_h$ and define the discrete rotation via $\mathrm{rot}_h(\cdot)\coloneqq -J_h\mathrm{grad}_h(\cdot)$. 
Here, $J_h$ represents the element-wise rotation by $90^\circ$ in the tangent plane. 
Since $M_h \subset \mathbb{R}^3$, this operation is realized as $J_h(X_h) = N_{M_h} \times X_h$ for any $X_h \in \mathbb{X}_h$, where $N_{M_h}$ denotes the element-wise unit normal field.

Crucially, it holds that $\mathrm{grad}_h(\mathrm{CR}^1(\mathcal{T}_h))\perp \mathrm{rot}_h(S^1_0(\mathcal{T}_h))$.
We therefore define the discrete harmonic space as the orthogonal complement within $\mathbb{X}_h$
$$
\mathbb{H}_{h,N} \coloneqq \big( \mathrm{grad}_h(\mathbb{Q}_h) \oplus \mathrm{rot}_h(\mathbb{S}_h) \big)^\perp \subset \mathbb{X}_h.
$$
It can be shown that the elements of $\mathbb{H}_{h,N}$ have continuous normal components across edges and are exactly tangential to the boundary $\partial M_h$ \cite[Lem 3.1]{Poelke}. 
Moreover, since $M_h$ is homeomorphic to $M$, the dimension of this space satisfies $\mathrm{dim}(\mathbb{H}_{h,N}) = \mathrm{dim}(H_N)$ (see \cite[Thm 3.2]{Poelke}), ensuring that the discrete cohomology matches the non-discrete one.
Consequently, this choice of spaces satisfies all requirements of the abstract discrete setting.

\paragraph{Basis for discrete harmonic fields.}
For completeness, we briefly outline the construction of a basis for $\mathbb{H}_{h,N}$, following the procedure described in \cite[Algo 4]{FluidCohomology} (see e.g.\ \cite[Sec 4.1]{Poelke} for alternative methods).
We first determine the dimension $l \coloneqq \mathrm{dim}(\mathbb{H}_{h,N})$ using the Euler characteristic relation
\begin{align*}
    l=\vert \mathcal{E}_h\vert-\vert \mathcal{V}_h\vert-\vert \mathcal{T}_h\vert +1+\begin{cases}1,~&\partial M=\emptyset\\0,~&\partial M\neq\emptyset\end{cases},
\end{align*}
where $\vert \mathcal{V}_h\vert$, $\vert \mathcal{E}_h\vert$, and $\vert \mathcal{T}_h\vert$ denote the number of vertices, edges, and triangles in the mesh, respectively.
With $l$ determined, we generate $l$ random vector fields $X_1, \ldots, X_l \in \mathbb{X}_h$.
To extract the harmonic component of each field, we compute its discrete Hodge decomposition by projecting out the rotational and gradient parts.
This is achieved by solving the following Poisson problems for $(\xi_k, \zeta_k) \in \mathbb{S}_h \times \mathbb{Q}_h$:
\begin{align*}
    (\mathrm{rot}_h(\xi_k), \mathrm{rot}_h(\varphi_h))_h&=(X_k, \mathrm{rot}_h(\varphi_h))_h,~\forall \varphi_h \in \mathbb{S}_h, \\
    (\mathrm{grad}_h(\zeta_k),\mathrm{grad}_h(q_h))_h&=(X_k, \mathrm{grad}_h(q_h))_h,~\forall q_h \in \mathbb{Q}_h,
\end{align*}
and subsequently setting $Y_k\coloneqq X_k-\mathrm{rot}_h(\xi_k)-\mathrm{grad}_h(\zeta_k)\in\mathbb{H}_{h,N}$.
Since the initial fields are chosen randomly, the resulting projections $\{Y_k\}_{k=1}^l$ are almost surely linearly independent.
Finally, we apply the Gram--Schmidt process to $\{Y_k\}_{k=1}^l$ to obtain an orthonormal basis for $\mathbb{H}_{h,N}$.

\subsection{Numerical examples}
\label{sec:NumEx}
To validate our approach, we consider three test cases that highlight the method's performance on domains with non-trivial topology -- i.e., domains where harmonic fields play a fundamental role -- and varying boundary conditions.\\
First, we address the planar Schäfer Turek benchmark.
This example serves to quantitatively assess our discrete schemes by computing relevant quantities of interest and comparing them to established values from the literature.
It further poses the challenge of mixed boundary conditions that differ from the standard Dirichlet settings.
For a detailed discussion on a more general discrete Hodge decomposition capable of incorporating this wider class of boundary conditions, we refer the reader to the supplementary material.\\
Next, we extend our validation to curved manifolds by simulating the Schäfer Turek benchmark on a pierced ring and the Kelvin--Helmholtz instability on a torus.
These examples serve to verify that our discrete schemes exhibit the correct qualitative behavior on curved surfaces.

\paragraph{Schäfer Turek benchmark}
In this section, we analyze the Schäfer--Turek benchmark problem 2D-2 from \cite{schafer1996benchmark} in the vorticity formulation.
This benchmark models laminar flow ($Re=100$) through a rectangular channel containing a slightly off-center circular obstacle, leading to a \emph{Kármán vortex street}; see \Cref{fig:TurSchäfer} for a visualization.
For the precise setup, we refer the reader to the supplementary material.

This benchmark poses a significant challenge for vorticity-based formulations for two reasons.
First, the domain is not simply connected, necessitating the correct handling of harmonic fields to capture the physics.
Second, the model requires mixed boundary conditions (Dirichlet on inlet, walls, and obstacle; vanishing tangential velocity and pressure on the outlet) that differ from the pure Dirichlet settings analyzed previously.
These can be incorporated by utilizing a generalized Hodge decomposition from \cite[Thm 1.1]{gol2011hodge} that accommodates mixed boundary conditions.
For more details on the class of admissible boundary conditions and their relation to the stress tensor, we refer the reader to the supplementary material.

Note that the implementation of this mixed Hodge decomposition and the discretization process follows the strategy outlined previously.
However, the enforcement of the boundary condition $\langle V,T\rangle = 0$ at the outlet requires careful treatment, particularly on coarser meshes, as it creates a jump in the tangential velocity across the boundary elements; this in turn induces spurious oscillations in the vorticity near the outlet.
To suppress these artifacts and stabilize the solution, we introduce a local coupling term of order $\mathcal{O}(h^2)$ in the vicinity of the outlet.

To evaluate the simulation quantitatively, we compute the total hydrodynamic force acting on the obstacle boundary $\partial M_{\circ}$, which simplifies under the no-slip condition to $F_\circ = \int_{\partial M_\circ} (-p N + \nu \omega T) \, \mathrm{d}s$.
From the $x$- and $y$-components of $F_\circ$, we compute the dimensionless drag ($C_D$) and lift ($C_L$) coefficients.
Additionally, we compute the Strouhal number ($\mathrm{St}$), which characterizes the frequency of the periodic vortex shedding.
For the precise definitions of these dimensionless quantities and the flow parameters used, we refer the reader to the supplementary material.

Numerical values for these quantities are reported in \Cref{tab:SchTurek} for various mesh resolutions.
While the drag coefficients $C_D^{\min}$ and $C_D^{\max}$ converge closely to the reference values, the lift coefficients $C_L^{\min}$ and $C_L^{\max}$ converge to values slightly distinct from the literature.
This discrepancy likely stems from the outlet boundary conditions.
Our formulation imposes $\langle V,T\rangle=0$ and $\langle\sigma(V,p)N,N\rangle=0$ at $\partial M_{\mathrm{out}}$, whereas the reference values were computed using the standard ``do-nothing'' condition ($\sigma(V,p)N=0$) common in velocity-pressure formulations.

\begin{figure}[tbp]
    \begin{center}
        \hspace*{-2.0cm}
        \rotatebox{37.5}{\includegraphics[width=0.42\textwidth]{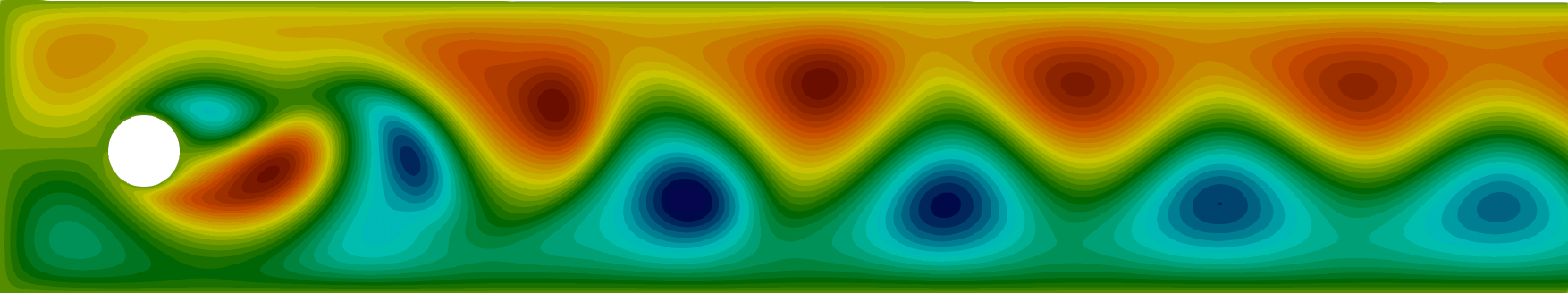}}
        \hspace*{-5.1cm}
        \rotatebox{37.5}{\hspace{1.25cm}\textbf{Streamfunction $\psi$}}
        \hspace*{-1.85cm}
        \rotatebox{37.5}{\includegraphics[width=0.42\textwidth]{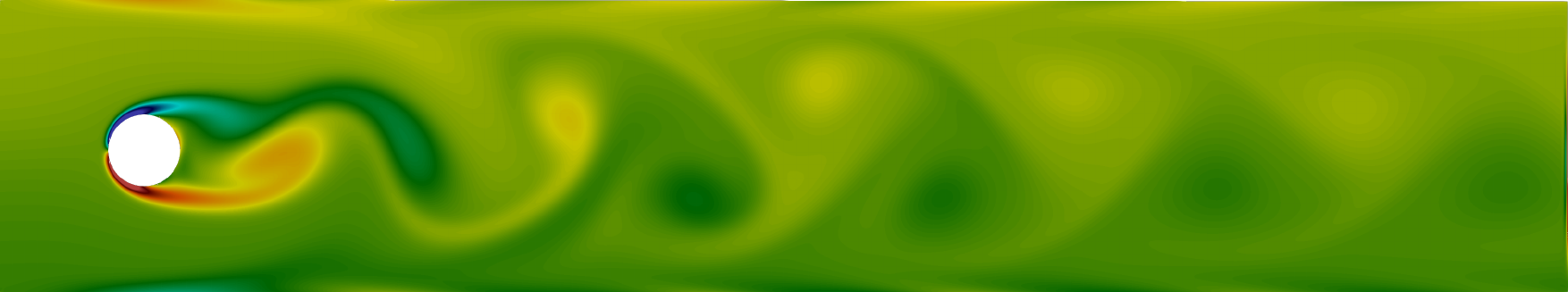}}
        \hspace*{-5.1cm}
        \rotatebox{37.5}{\hspace{1.25cm}\textbf{Vorticity $\omega$}}
        \hspace*{-.8cm}
        \rotatebox{37.5}{\includegraphics[width=0.42\textwidth]{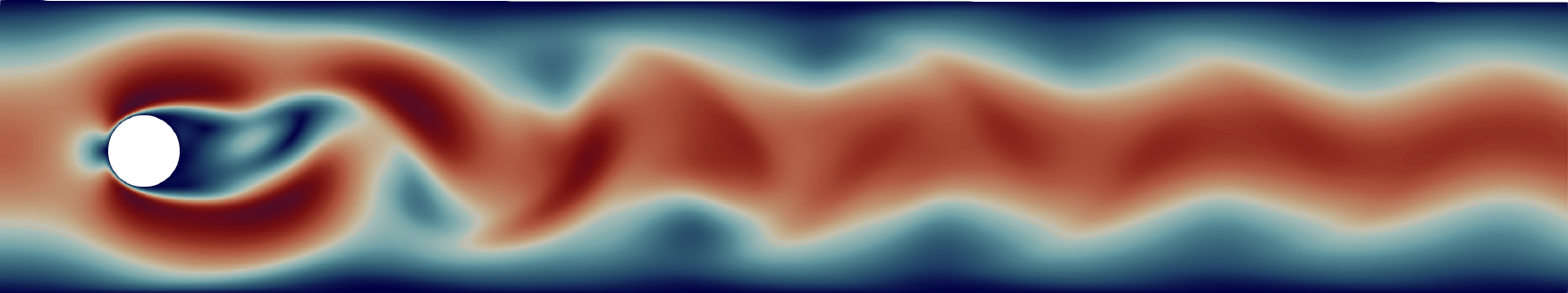}}
        \hspace*{-5.1cm}
        \rotatebox{37.5}{\hspace{1.25cm}\textbf{Velocity $\vert V\vert$}}
    \end{center}
    \caption{Schäfer Turek benchmark. The streamfunction $\psi$, vorticity $\omega$, and velocity field $\vert V\vert$ are displayed, visualizing the shedding of vortices behind the hole.}
    \label{fig:TurSchäfer}
\end{figure}

\begin{table}[tbp]
    \begin{tabular}{c|c|l|l|l|l|l}
        $\Delta t$ & $h$ & $C_D^{\text{min}}$ 
        & $C_D^{\text{max}}$ & $C_L^{\text{min}}$ & $C_L^{\text{max}}$ & St \\\hline
        $6.4 \cdot 10^{-4}$ & $4 \cdot 10^{-3}$ & 3.02488 & 3.09034 & -1.02589 & 1.01468 & 0.30390 \\\hline
        $3.2 \cdot 10^{-4}$ & $2 \sqrt{2} \cdot 10^{-3}$ & 3.11382 & 3.17828 & -1.01779 & 1.00241 & 0.30283\\\hline 
        $1.6 \cdot 10^{-4}$ & $2 \cdot 10^{-3}$ & 3.14485 & 3.20881 & -1.01494 & 0.99732 & 0.30234 \\\hline 
        $0.8 \cdot 10^{-4}$ & $\sqrt{2} \cdot 10^{-3}$ & 3.15727 & 3.22098 & -1.01429 & 0.99501 & 0.30208 \\\hline
        \multicolumn{2}{c|}{Ref. values \cite{LS_CMAME_2016} and \cite{2D-2}} & 3.16430 & 3.22757 & -1.02053 &  0.98580 & 0.30188
    \end{tabular}
    \caption{Schäfer Turek benchmark 2D-2 results for different resolutions.}
    \label{tab:SchTurek}
\end{table}

\FloatBarrier
\paragraph{Schäfer Turek benchmark on a pierced ring}
In this section, we analyze the classical Schäfer Turek benchmark on a pierced ring.
Specifically, we define our surface $M$ as the lateral area of a cylinder aligned with the $z$-axis, having unit diameter and unit height.
This domain is pierced slightly above the equator ($z=0.025$) by a circular hole of diameter $d=1/8$, located on the negative $y$-axis.

To drive the flow, we apply a constant external force with magnitude $\vert F\vert=0.1$ in the clockwise azimuthal direction (tangential to the cylinder surface).
We simulate the dynamics governed by the Navier--Stokes equations with a viscosity of $\nu=0.001$.
The resulting flow accelerates until the driving force is balanced by viscous dissipation, establishing a mean velocity $V_{\mathrm{mean}}$.
Using the hole diameter $d$ as the characteristic length scale and $V_{\mathrm{mean}}$ as the velocity scale, we obtain a Reynolds number of $\Rey\approx 100$.
Consistent with expectations from the flat case discussed above, we observe the formation of a characteristic \emph{Kármán vortex street}, as illustrated in \Cref{fig:PiercedRing}.

\begin{figure}[tbp]
    \begin{center}
    \begin{minipage}{0.32\textwidth}
        \hspace{0.4cm} \includegraphics[width=0.8\textwidth,trim= 2.3cm 4.5cm 1.5cm 5.9cm, clip=True]{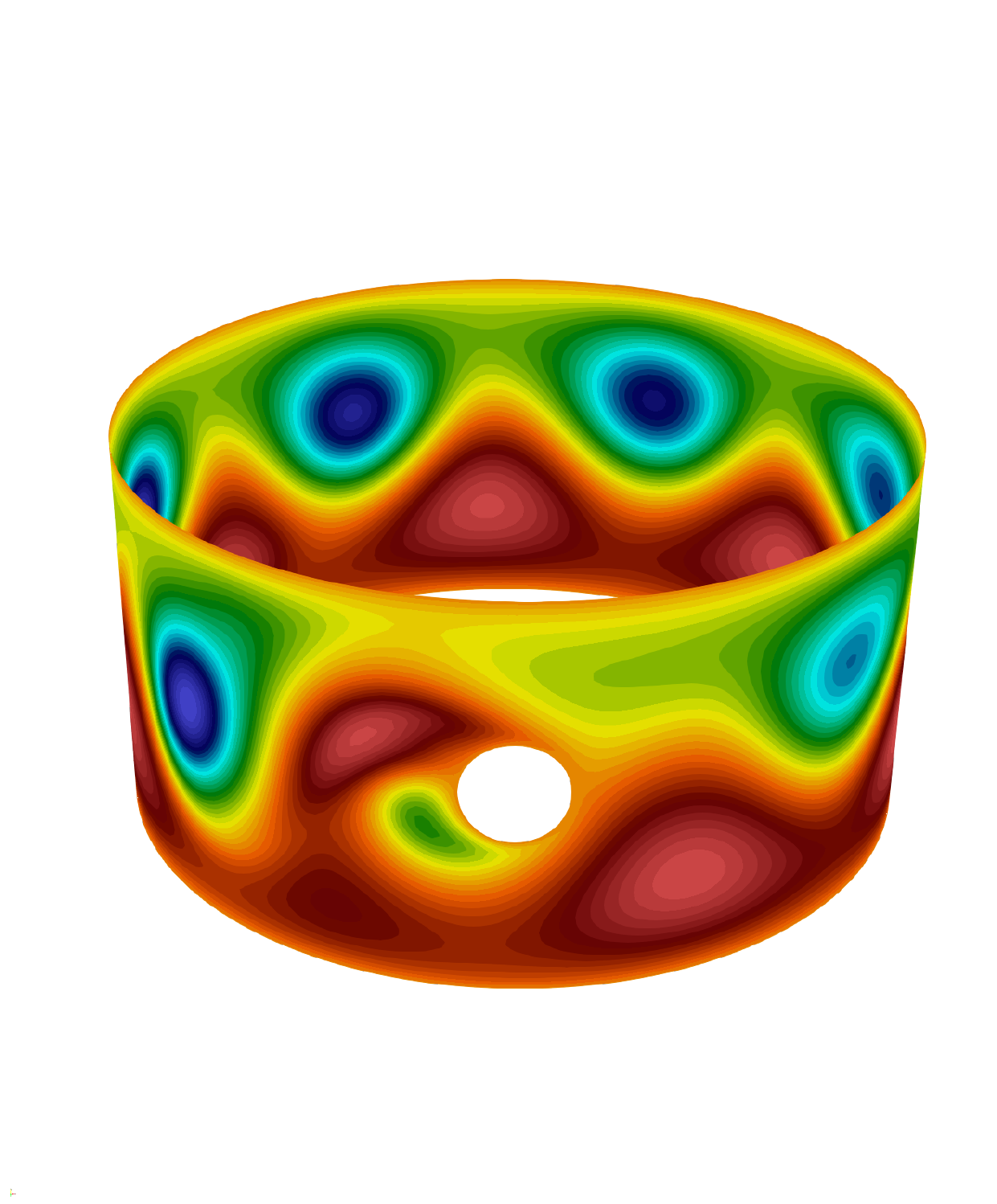}
    \end{minipage}
    \begin{minipage}{0.32\textwidth}
        \hspace{0.4cm} \includegraphics[width=0.8\textwidth,trim= 2.3cm 4.5cm 1.5cm 5.9cm, clip=True]{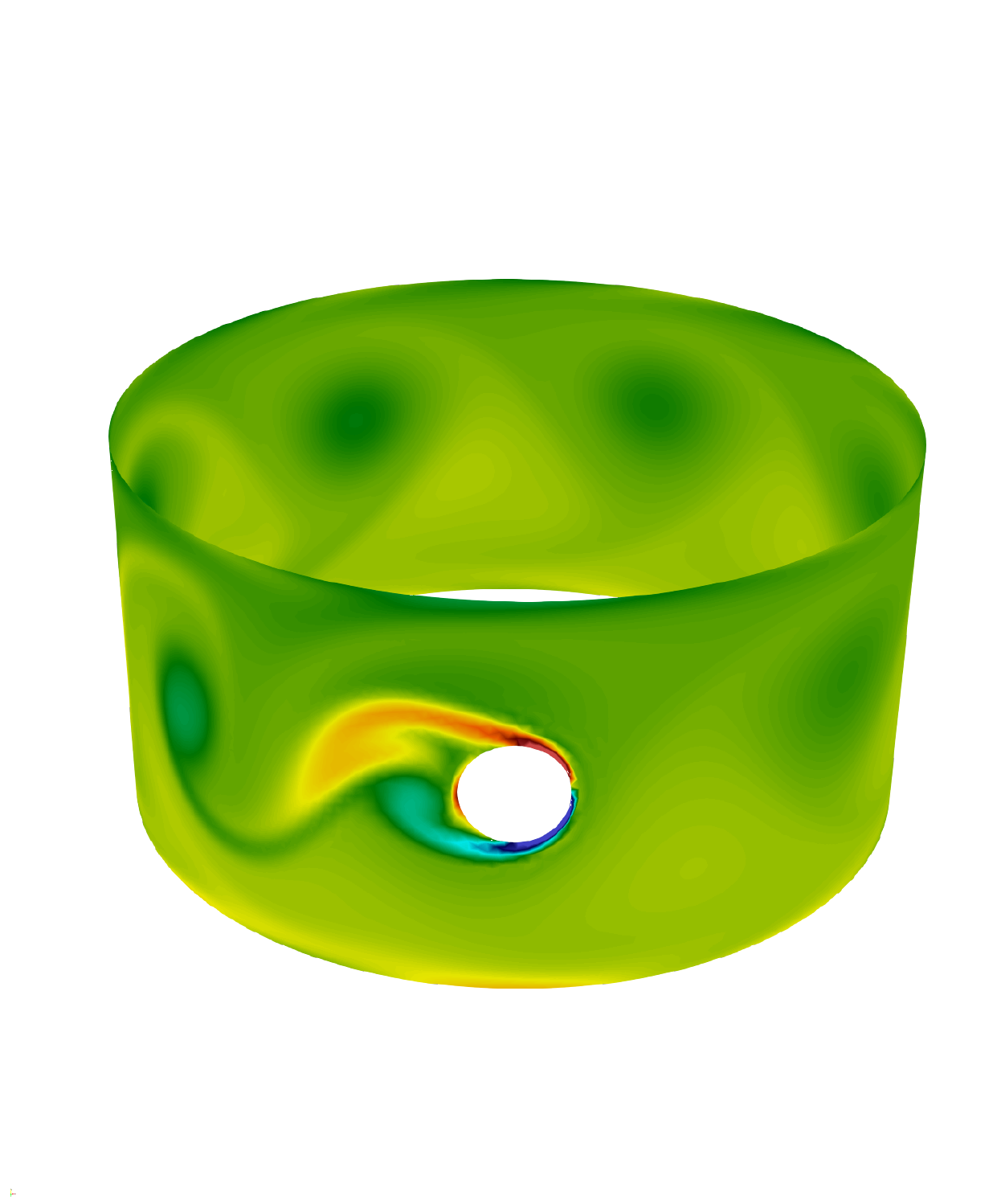}
    \end{minipage}
    \begin{minipage}{0.32\textwidth}
        \hspace{0.4cm} \includegraphics[width=0.8\textwidth,trim= 2.3cm 4.5cm 1.5cm 5.9cm, clip=True]{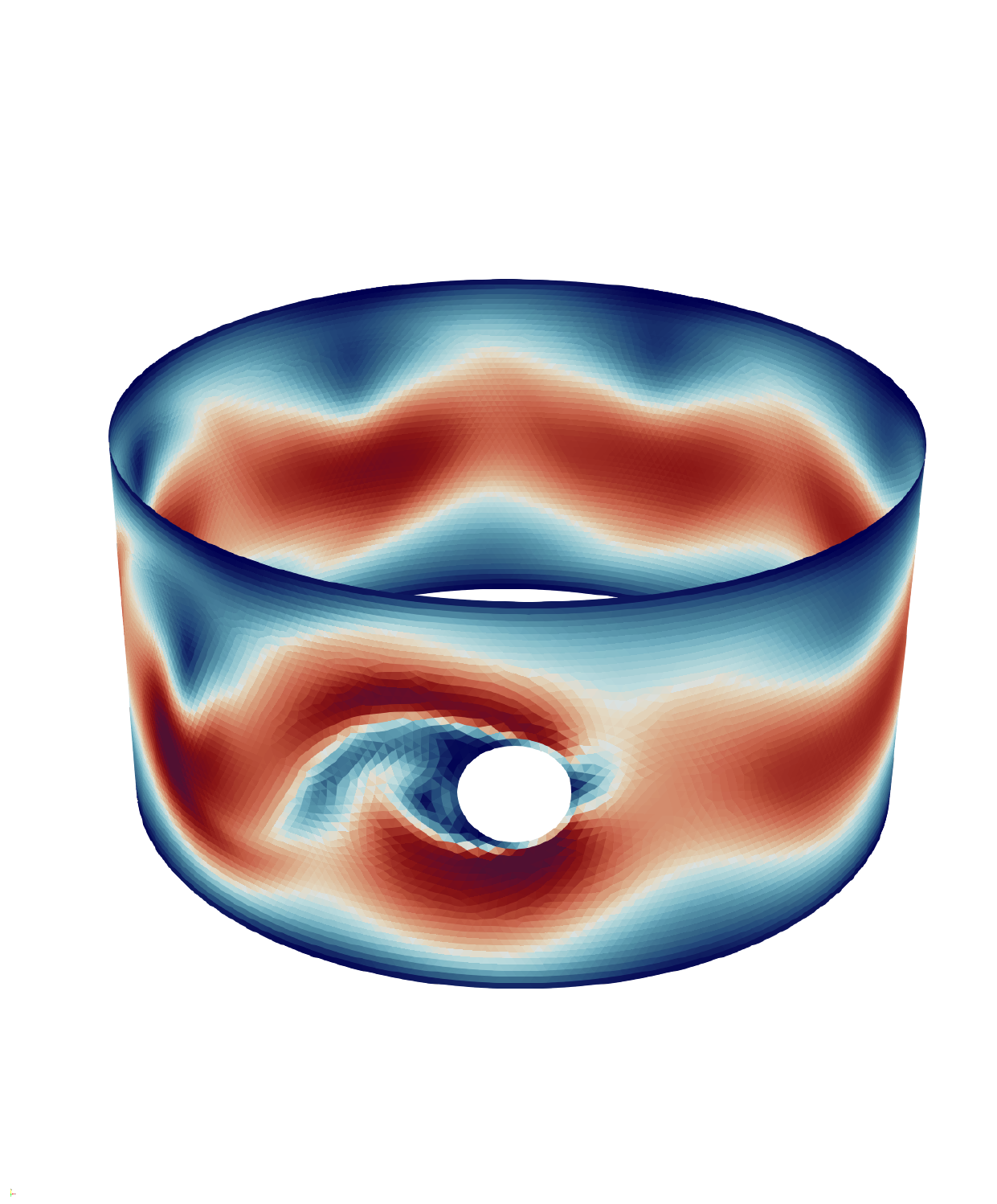}
    \end{minipage}\\
    \vspace{0.1cm}
    \begin{minipage}{0.32\textwidth}
        \begin{center}
            \hspace{0cm}\textbf{Streamfunction $\psi$}
        \end{center}
    \end{minipage}
    \begin{minipage}{0.32\textwidth}
        \begin{center}
            \hspace{0cm}\textbf{Vorticity $\omega$}
        \end{center}
    \end{minipage}
    \begin{minipage}{0.32\textwidth}
        \begin{center}
            \hspace{0cm}\textbf{Velocity $\vert V\vert$}    
        \end{center}
    \end{minipage}\\
    \vspace{-0.4cm}
    \end{center}
    \caption{Schäfer Turek benchmark on a pierced ring. The streamfunction $\psi$, vorticity $\omega$, and velocity field $\vert V\vert$ are displayed, visualizing the shedding of vortices behind the hole.}
    \label{fig:PiercedRing}
\end{figure}

\paragraph{Kelvin--Helmholtz instability on a torus}
In this section, we analyze the Kelvin--Helmholtz instability on a torus governed by the Navier--Stokes equations with viscosity $\nu=0.001$ and vanishing external force ($F=0$).
It is important to emphasize that for the torus, the first Betti number is $b_1=2$, implying that the dimension of the harmonic space is $\dim(\mathbb{H}_{h,N})=2$.
Consequently, harmonic fields play a significant role in the flow dynamics.

We utilize a setup adapted from the classical square periodic domain (see e.g.\ \cite[Ch 2]{schroeder2019}) to the toroidal geometry.
The flow is driven by an initial velocity field $V_0$ designed to generate a shear layer with a Reynolds number of $\Rey=200$.
Physically, $V_0$ creates a counter-rotating flow between the upper and lower halves of the torus, superimposed with a perturbation that breaks the rotational symmetry.
This destabilizes the shear layer, leading to the characteristic roll-up of vortices and their subsequent interaction shown in \Cref{fig:Torus}.
For the precise parameterization and the explicit construction of $V_0$, we refer the reader to the supplementary material.

In the flat planar case with periodic boundary conditions in one direction (topologically representing a ring or annulus), it is well-established that the instability evolves into four distinct vortices, which subsequently merge into two and finally into a single vortex structure, with the long-time behavior being extremely sensitive to small perturbations \cite[Sec 4]{schroeder2019}.
Theoretically, this aligns with the fact that for the Euler equations, the dynamics of four interacting vortices are known to be chaotic, whereas configurations with 1, 2, or 3 vortices are integrable \cite[Sec 11.B]{ArnoldKhesin}.

We observe a qualitatively similar evolution on the torus.
In particular, the chaotic nature of the four-vortex configuration makes the long-time behavior extremely sensitive to small perturbations.
In our simulation, the lack of perfect symmetry in the unstructured mesh acts as such an additional perturbation.
Consequently, while the flow maintains symmetry initially, there exists a time $t^*$ after which the solution becomes visibly asymmetric.
This is particularly evident during the first merging phase, where the two vortex pairs do not merge at diametrically opposite locations.
This axial asymmetry grows with increasing time.
We note that similar symmetry breaking behavior was observed in \cite[Sec 4.2]{KHTorus} during the process of four vortices merging into two.
\begin{figure}[tbp]
    \begin{minipage}{1\textwidth}
    \begin{center}
    \hspace{-0.25cm}
    \includegraphics[width=0.32\textwidth]{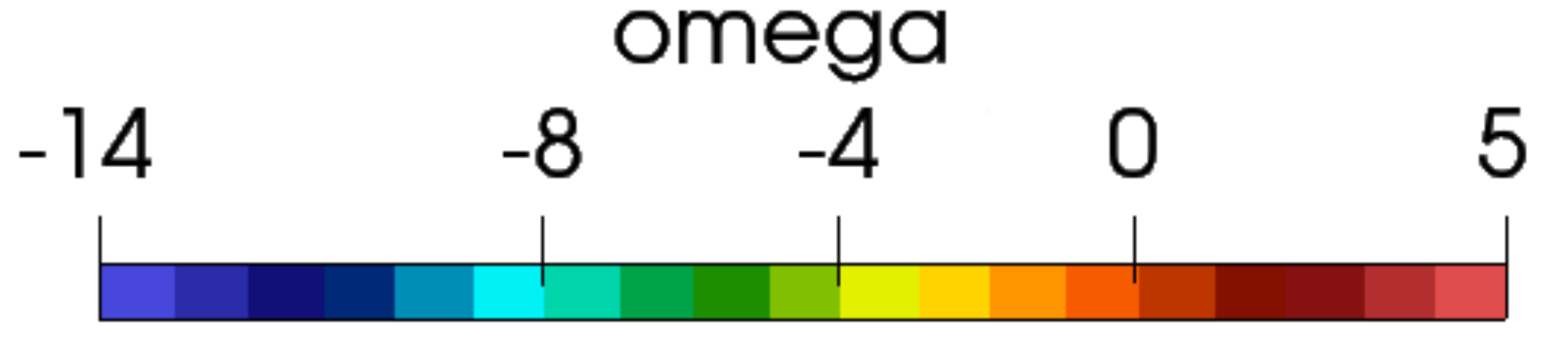} \\
    \includegraphics[width=0.28\textwidth,trim= 5.5cm 0cm 5.5cm 0cm, clip=True]{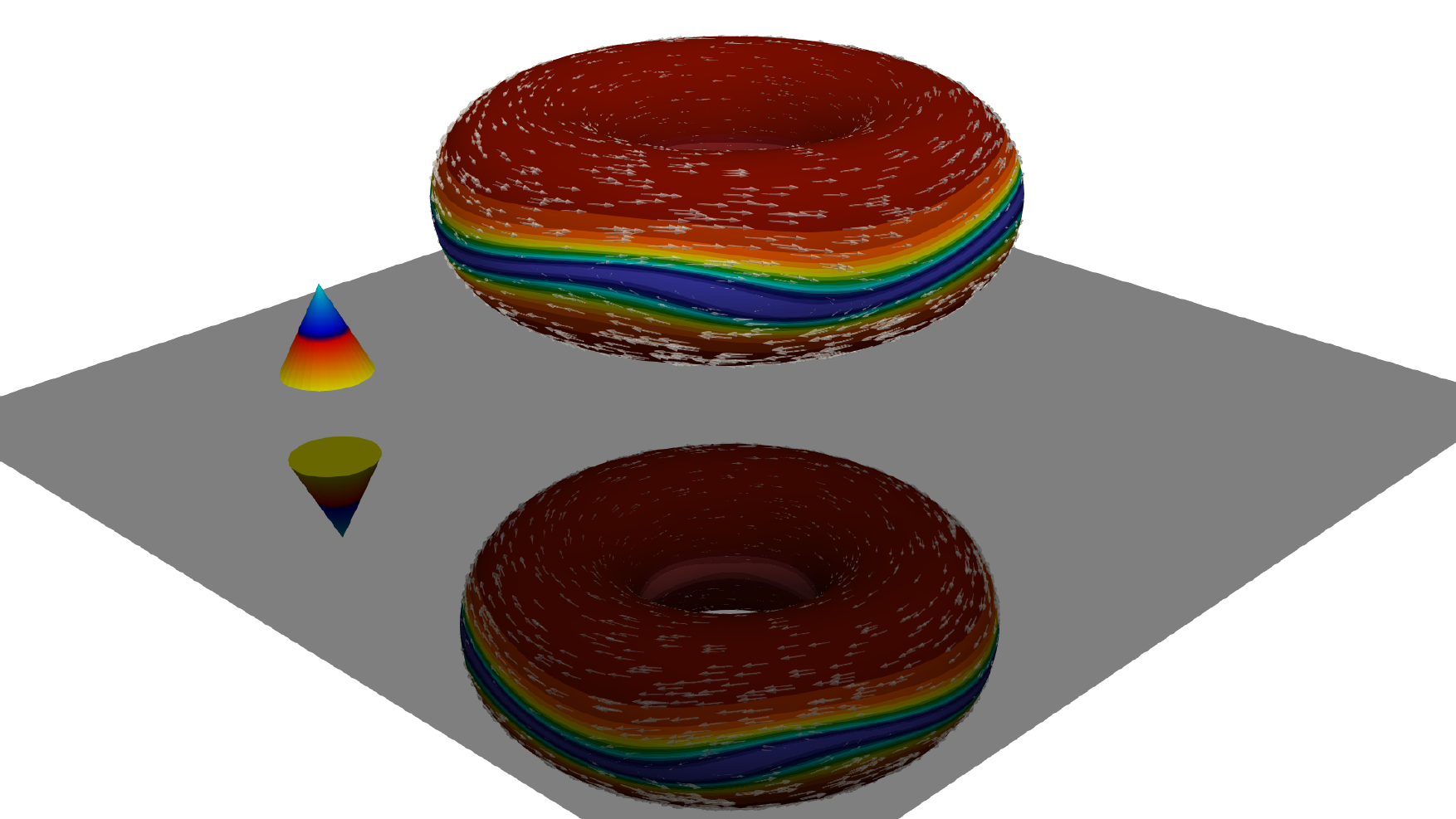}
    \includegraphics[width=0.28\textwidth,trim= 5.5cm 0cm 5.5cm 0cm, clip=True]{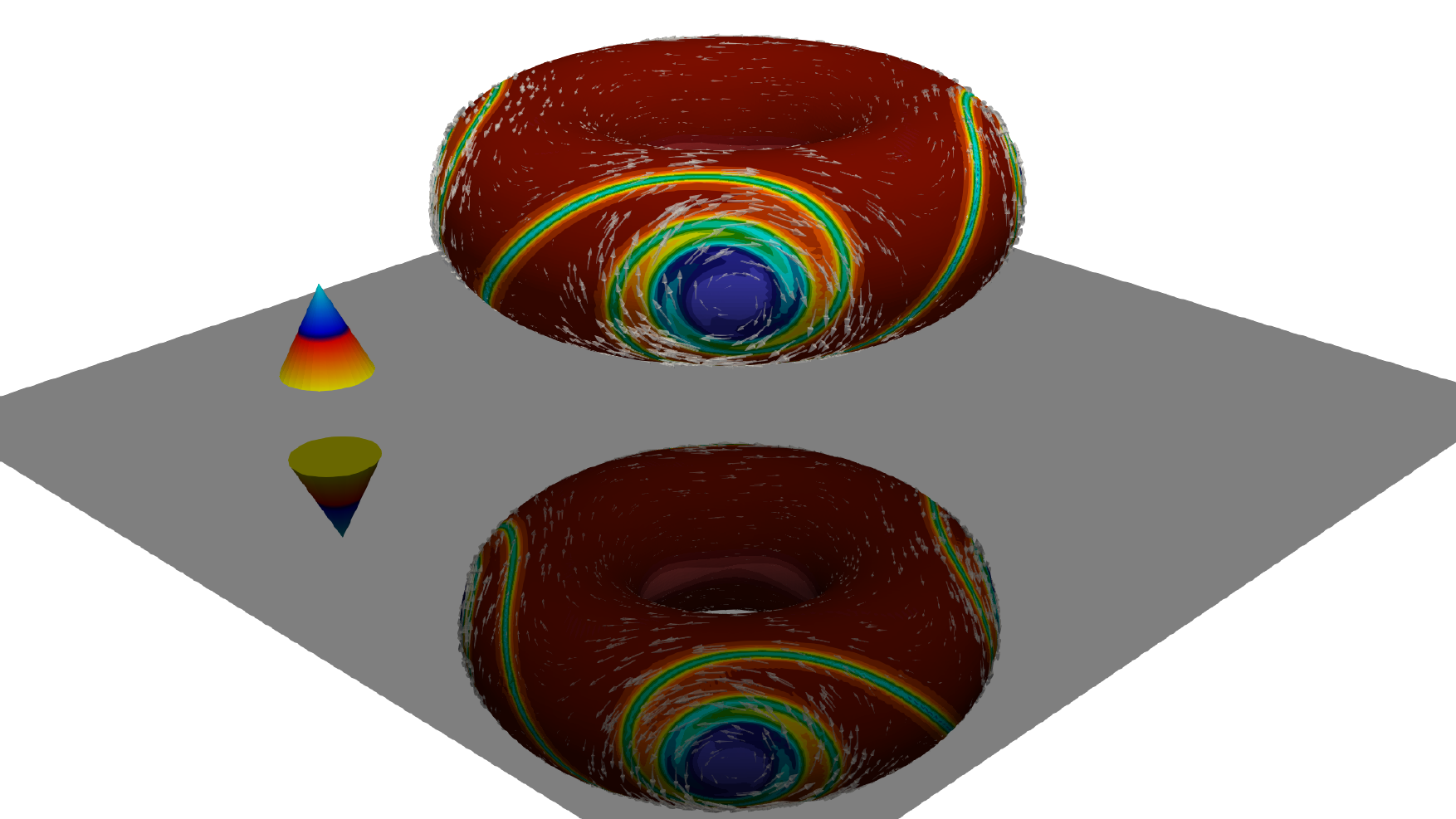}
    \includegraphics[width=0.28\textwidth,trim= 5.5cm 0cm 5.5cm 0cm, clip=True]{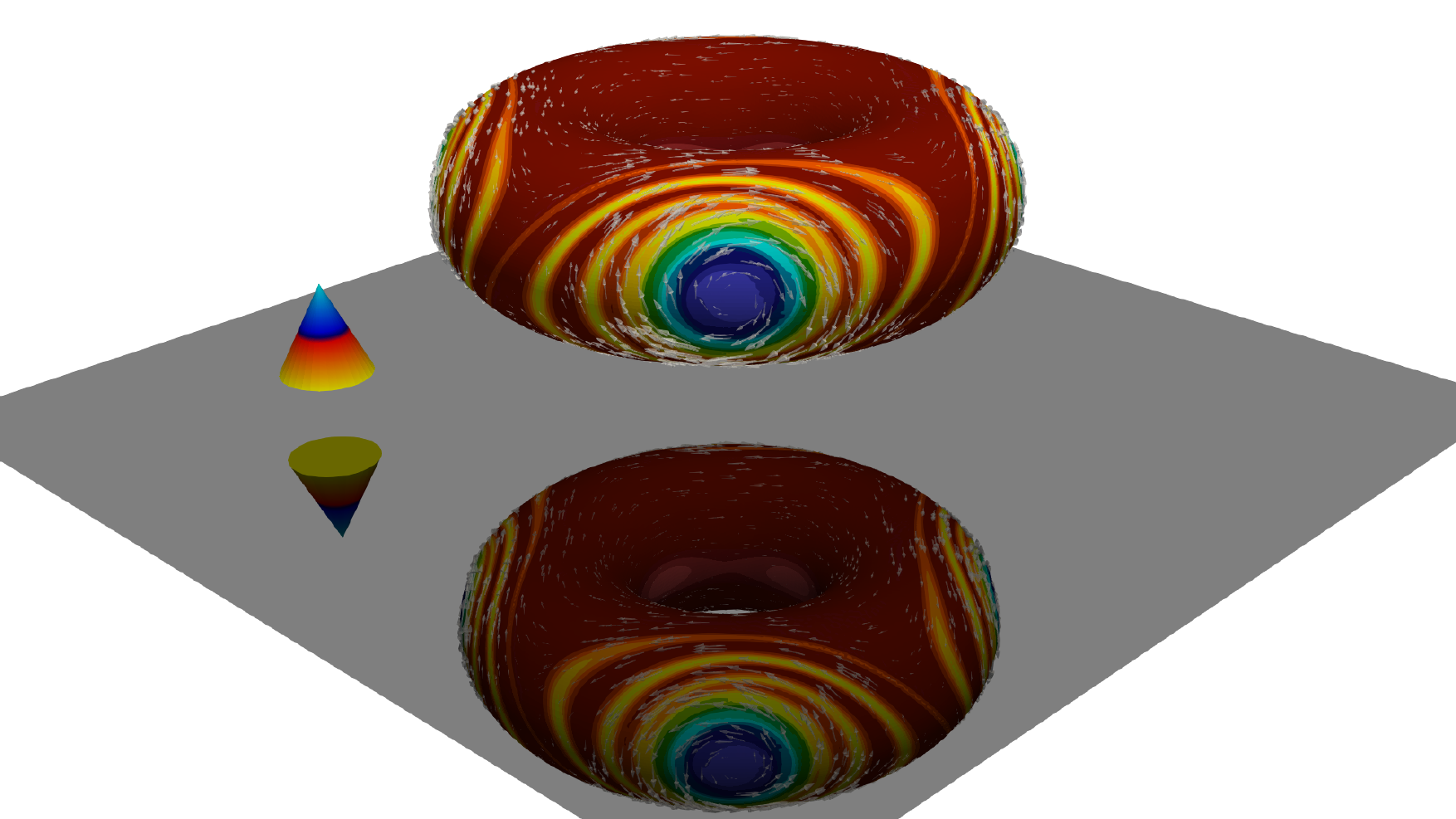} \\
    \includegraphics[width=0.28\textwidth,trim= 5.5cm 0cm 5.5cm 0cm, clip=True]{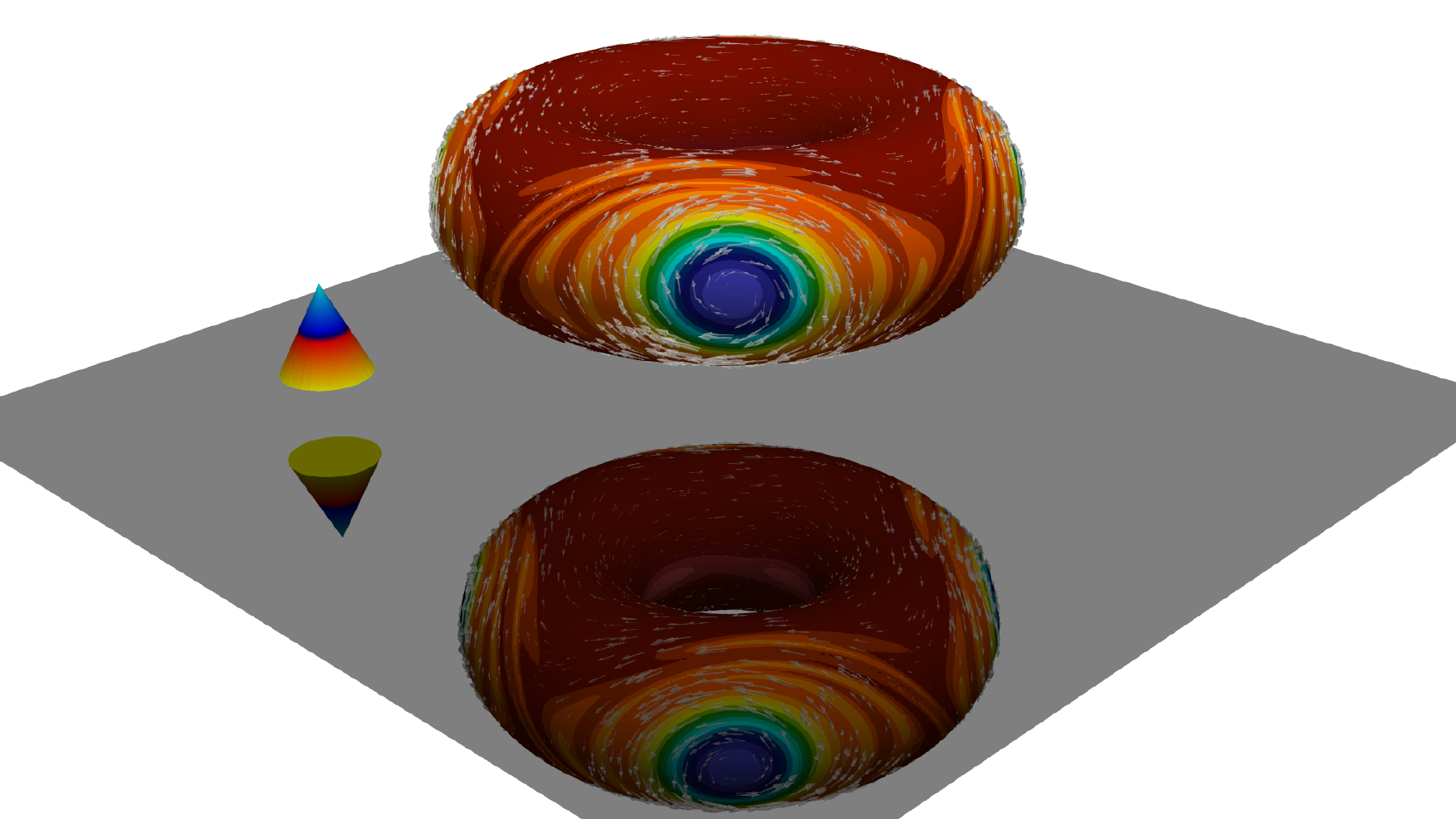}
    \includegraphics[width=0.28\textwidth,trim= 5.5cm 0cm 5.5cm 0cm, clip=True]{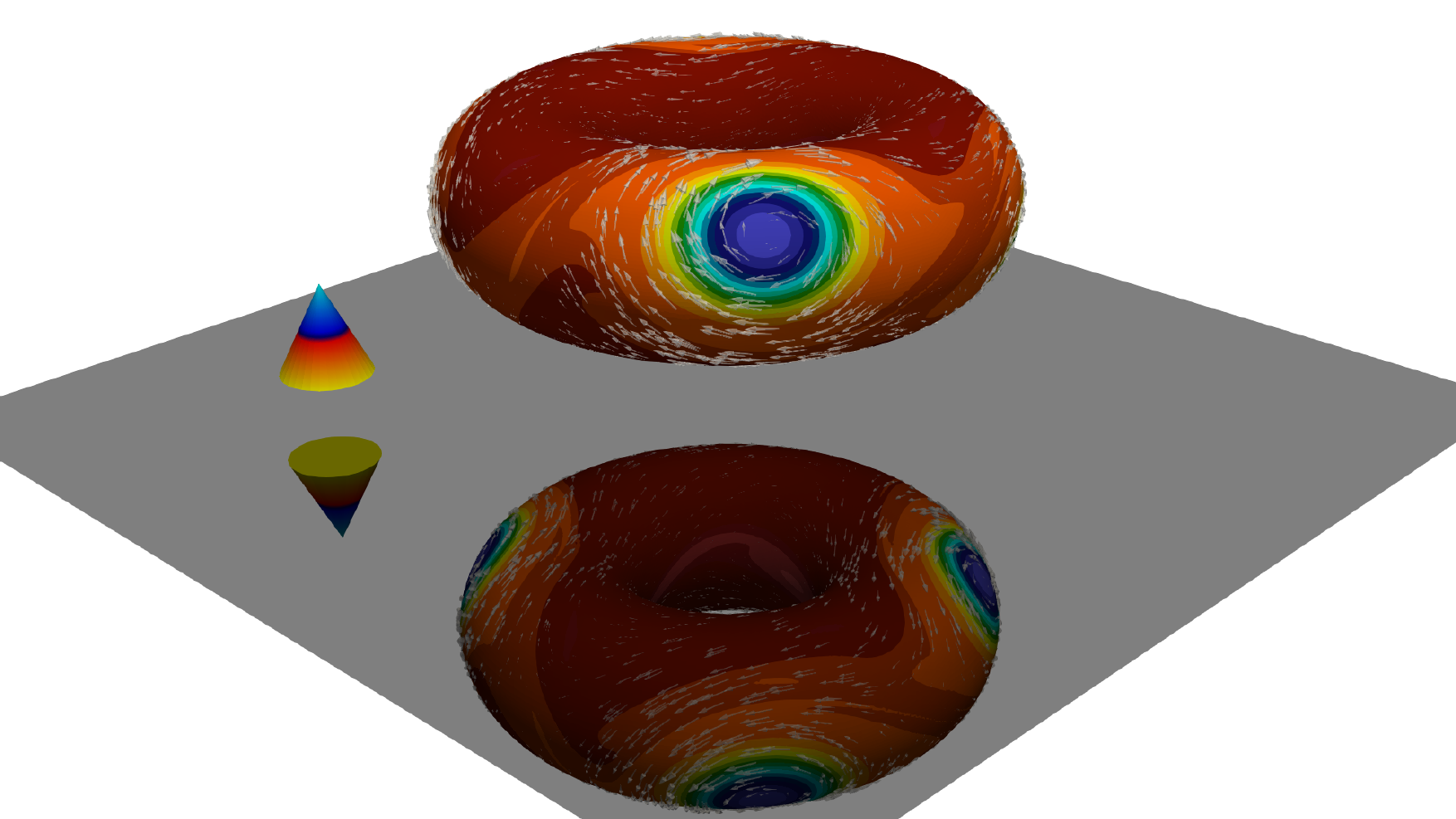}
    \includegraphics[width=0.28\textwidth,trim= 5.5cm 0cm 5.5cm 0cm, clip=True]{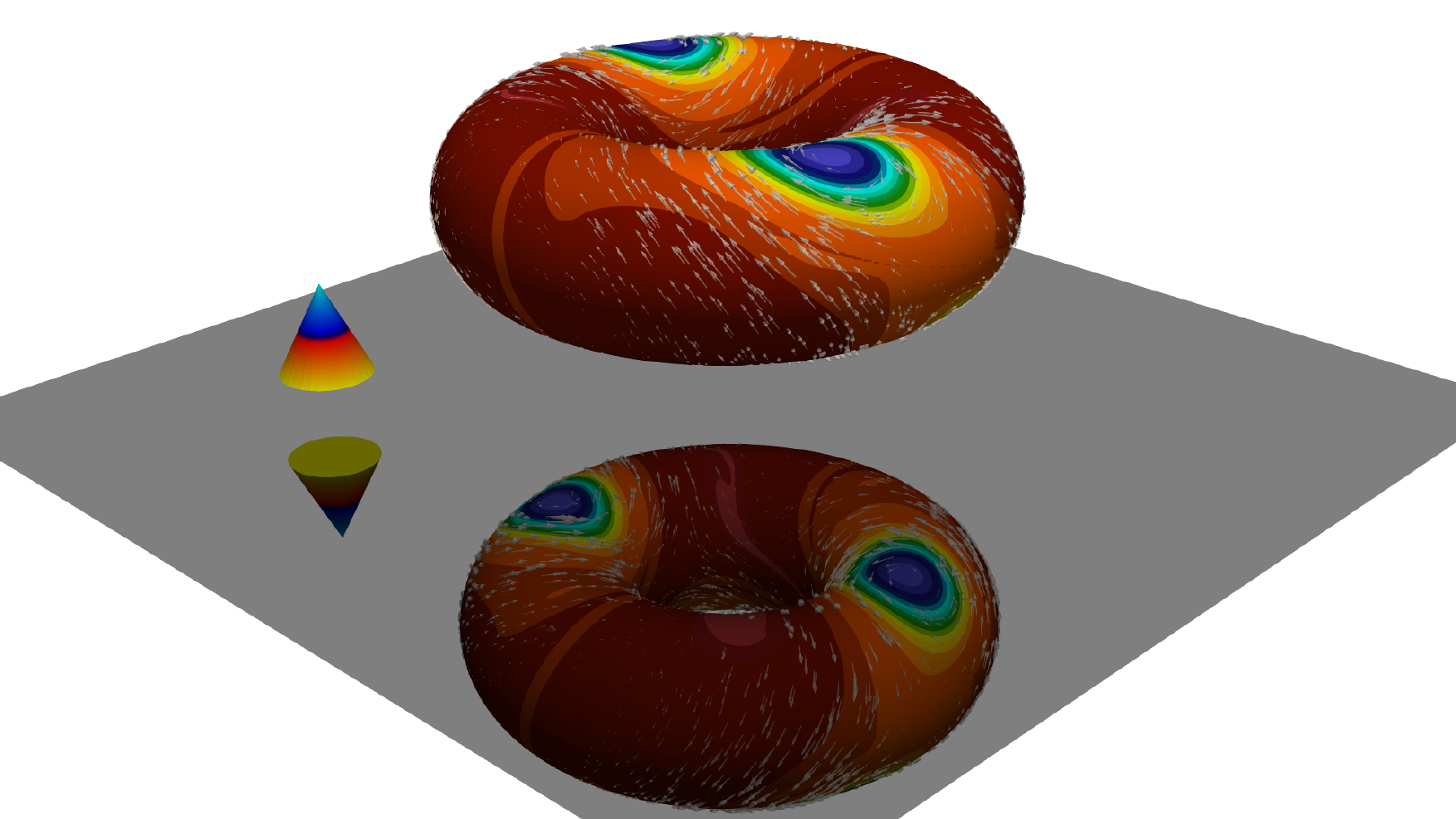}
    \end{center}
    \end{minipage}
    \caption{Kelvin--Helmholtz instability on a torus. Time evolution of vorticity $\omega$ and velocity $V$ (white arrows) governed by the Navier--Stokes equations. A mirror below reveals the rear dynamics.}
    \label{fig:Torus}
\end{figure}

\bibliographystyle{plain}
\bibliography{sources/biblio}
\FloatBarrier
\appendix
\section{Notation}
\label{sec:not}
In this section, we introduce the necessary background on Sobolev spaces, boundary traces, and Bochner spaces.
The notation employed here extends standard conventions from the Euclidean setting.

\paragraph{Surfaces.}
We begin by specifying the class of surfaces considered in this work.

\begin{definition}[Surface with $C^{k,1}$ or $C^{k+1}$ boundary]
\label{def:surface}
    \nextline Let $W$ be a smooth, oriented, two-dimensional Riemannian manifold, possibly with smooth boundary.
    A subset $M \subset W$ is called a \textit{surface with $C^{k,1}$ (or $C^{k+1}$) boundary}, for $k \in \mathbb{N}_0$, if $M$ is the closure of an open, connected subset of $W$, $M$ is compact and if for every $x \in\partial M$, there exists a neighborhood $U\subset W$ of $x$ and a chart $\varphi:U\to\mathbb{R}^2$ of $W$ such that $\varphi(U\cap\partial M)$ is the graph of a $C^{k,1}$ (or $C^{k+1}$) function, with $\varphi(U\cap M^\circ)$ lying on one side of this graph.
\end{definition}

It is worth noting that a surface $M$ with a $C^{k,1}$ (or $C^{k+1}$) boundary constitutes a $C^{k,1}$ (or $C^{k+1}$) submanifold of $W$ \cite[Thm 1.2.1.5]{Grisvard}.
The converse, however, holds only if $M$ is at least a $C^1$ submanifold.\\
Prominent examples of surfaces with $C^{k,1}$ or $C^{k+1}$ boundary include: (i) the closure of any open, bounded, and connected subset of $\mathbb{R}^2$ with a $C^{k,1}$ (or $C^{k+1}$) boundary (see, e.g., \Cref{fig:domains} a)), (ii) any compact, connected, oriented two-dimensional smooth Riemannian manifold $M$, with or without smooth boundary (see, e.g., \Cref{fig:domains} b) and c)).\\
Throughout the remainder of this work, we assume that $M$ possesses at least a $C^{0,1}$ (Lipschitz) boundary.
Any requirements for higher boundary regularity will be stated explicitly.
\FloatBarrier
\begin{figure}[tbp]
\begin{center}
    \begin{tikzpicture}
        \draw (3.35, 0) node[inner sep=0]{\includegraphics[width=0.625\textwidth, trim={0cm 0cm 0cm 0cm}, clip]{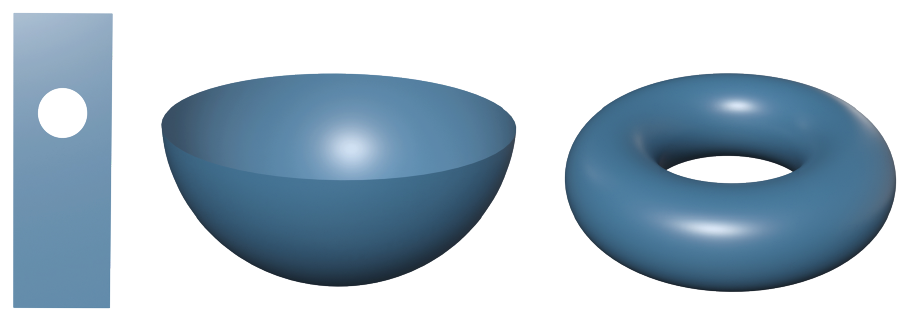}};
        \draw (-0.3, -1.45) node {a)};
        \draw (3.7, -1.45) node {b)};
        \draw (8.1, -1.45) node {c)};
    \end{tikzpicture}
    \caption{Examples of surfaces $M$: a) A subset of $\mathbb{R}^2$ with a $C^{0,1}$ boundary (rectangle with an open disk removed); b) A smooth manifold with boundary (hemisphere); c) A smooth manifold without boundary (torus).}
    \label{fig:domains}
\end{center}
\end{figure}

\paragraph{Function spaces.}
We now introduce the Sobolev spaces on $M$ relevant to this work, extending standard definitions from bounded domains in $\mathbb{R}^2$ \cite{Navier} to $(r,s)$-tensors on manifolds.
We rely on the standard operators from Riemannian geometry defined in \Cref{sec:RiemGeo}.

Let $\Gamma^{(r,s)}(W)$ be the set of smooth sections of $T^{(r,s)}W$, $\Gamma^{(r,s)}\coloneqq \Gamma^{(r,s)}(W)\vert_{M}$ and $\Gamma^{(r,s)}_0$ be the subset of $\Gamma^{(r,s)}$ consisting of sections with compact support in the interior of $M$.
For the specific cases of smooth functions, vector fields, and 1-forms, we adopt the following notation:
\begin{align*}
    C^\infty\coloneqq \Gamma^{(0,0)},~C^\infty_0\coloneqq \Gamma^{(0,0)}_0,~\Gamma\coloneqq \Gamma^{(1,0)},~\Gamma_0\coloneqq \Gamma^{(1,0)}_0,~\Omega^1\coloneqq \Gamma^{(0,1)},~\Omega^1_0\coloneqq \Gamma^{(0,1)}_0.
\end{align*}
Further, we denote by $(L^p(\Gamma^{(r,s)}), \Vert\cdot\Vert_{L^p})$ the standard Lebesgue spaces for $1\leq p\leq \infty$.
We define the dual pairing between $U\in L^p(\Gamma^{(r,s)})$ and $V\in L^q(\Gamma^{(r,s)})$ (where $\frac{1}{p}+\frac{1}{q}=1$) by $(U,V) \coloneqq \int_M \langle U,V\rangle\mathrm{d}V$.

For $X,Y\in \Gamma$, $\nabla_Y X$ denotes the Levi-Civita connection.
This extends to the covariant derivative $\nabla: \Gamma^{(r,s)} \to \Gamma^{(r,s+1)}$ for smooth tensor fields.
Utilizing the formal $L^2$-adjoint $\delta(A) \coloneqq -\mathrm{tr}_{12}(\nabla A)$ for $A \in \Gamma^{(r,s)}$ ($s\geq 1$), we generalize the concept of weak derivatives to $L^p(\Gamma^{(r,s)})$.
This yields the Sobolev spaces $W^{m,p}(\Gamma^{(r,s)})$ for $m\in \mathbb{N}$ and $1\leq p\leq \infty$, equipped with the norm $\Vert A\Vert_{W^{m,p}} \coloneqq \left( \sum_{k=0}^m \Vert\nabla^k A\Vert_{L^p}^p \right)^{\frac{1}{p}}$.

It is folklore that standard results for Sobolev spaces in $\mathbb{R}^2$ carry over to surfaces via a partition of unity argument.
In particular, the density results $L^p(\Gamma^{(r,s)})=\overline{\Gamma^{(r,s)}_0}^{\Vert\cdot\Vert_{L^{p}}}$ and $W^{m,p}(\Gamma^{(r,s)})=\overline{\Gamma^{(r,s)}}^{\Vert\cdot\Vert_{W^{m,p}}}$ hold for $1\leq p<\infty$ \cite[Lem I.1.1, Thm I.1.2]{Navier}.
This motivates the definition of the following subspace
\begin{align*}
    W^{m,p}_0(\Gamma^{(r,s)})\coloneqq \begin{cases}W^{m,p}(C^\infty\cap L^1_0(C^\infty)
    ~&\mathrm{if~}\partial M=\emptyset~\mathrm{and~}r=0=s
    \\\overline{\Gamma^{(r,s)}_0}^{\Vert\cdot\Vert_{W^{m,p}}},~&\mathrm{else}
    \end{cases}.
\end{align*}
Here, $L^1_0(C^\infty)\coloneqq\{f\in L^1(C^\infty)\vert~\int_M f\mathrm{d}V=0\}$ ensures a vanishing mean value.
The corresponding dual space is denoted by $W^{-m,q}(\Gamma^{(r,s)})$, where $q$ is the dual exponent.

Since a surface with a $C^{k,1}$ boundary constitutes a $C^{k,1}$ submanifold of $W$, we define the boundary trace spaces $W^{m,p}(\Gamma^{(r,s)}\vert_{\partial M})$ for $\partial M\neq \emptyset$ and $m\leq k+1$, following the construction in \cite[Def 1.3.3.2]{Grisvard}.
In particular, when $m\geq 1$ and $m-\frac{1}{p}\notin\mathbb{N}$, the boundary trace map $W^{m,p}(\Gamma^{(r,s)}) \to W^{m-1,p}(\Gamma^{(r,s)}\vert_{\partial M})$ is continuous and compact, with kernel $W^{m,p}(\Gamma^{(r,s)}) \cap W^{1,p}_0(\Gamma^{(r,s)})$.\\
Furthermore, for $W\in W^{1,q}(\Gamma^{(1,1)})$ and $V\in W^{1,p}(\Gamma)$ with $1<p,q< \infty$ and $\frac{1}{p}+\frac{1}{q}=1$, the following Green's formula holds
\begin{align*}
    (W,\nabla V)=(\delta W, V)+(W(N),V)_{\partial M}.
\end{align*}
Here, $N \in L^\infty(\Gamma\vert_{\partial M})$ denotes the unique outward-pointing unit normal vector field on $\partial M$, and $W$ is identified with a linear map $TM \to TM$.\\
Moreover, the boundary trace induces the fractional Sobolev spaces $W^{m-\frac{1}{p},p}(\Gamma^{(r,s)}\vert_{\partial M})$.
These are defined as the image of $W^{m,p}(\Gamma^{(r,s)})$ under the trace operator, equipped with the natural quotient norm of $W^{m,p}(\Gamma^{(r,s)})/\mathrm{ker}(\gamma_0)$.

Finally, we adopt the following abbreviations for function spaces: $W^{m,p}\coloneqq W^{m,p}(C^\infty),~W^{m,p}(\partial M)\coloneqq W^{m,p}(C^\infty\vert_{\partial M})$.
For the Hilbert space case ($p=2$), we use the standard notation: $H^m(\Gamma^{(r,s)}) \coloneqq W^{m,2}(\Gamma^{(r,s)}),~H^{m-\frac{1}{2}}(\Gamma^{(r,s)}\vert_{\partial M}) \coloneqq W^{m-\frac{1}{2},2}(\Gamma^{(r,s)}\vert_{\partial M})$.
Analogous notation applies to the corresponding dual spaces.

\paragraph{Additional differential operators.}
We introduce the following differential operators:
First, the gradient $\mathrm{grad}(\cdot)$ and its formal $L^2$-adjoint, the divergence $\mathrm{div}(\cdot)$.
Second, the rotated gradient operator $\mathrm{rot}(\cdot) \coloneqq -J\mathrm{grad}(\cdot)$ and its formal $L^2$-adjoint, the scalar curl $\mathrm{curl}(\cdot) \coloneqq -\mathrm{div}(J\cdot)$, where $J$ denotes the Hodge star operator, i.e., a rotation by 90°.

These definitions ensure that the weak divergence and curl operators are well-defined, giving rise to the Hilbert spaces $H(\mathrm{div})$ and $H(\mathrm{curl})$, equipped with their natural graph norms $\Vert\cdot\Vert_{H(\mathrm{div})}$ and $\Vert\cdot\Vert_{H(\mathrm{curl})}$.
Analogous to the standard Sobolev spaces, density arguments yield $H(\mathrm{div}) = \overline{\Gamma}^{\Vert\cdot\Vert_{H(\mathrm{div})}}$ and $H(\mathrm{curl}) = \overline{\Gamma}^{\Vert\cdot\Vert_{H(\mathrm{curl})}}$ \cite[Thm 1.1]{Temam}.

Furthermore, defining the tangential vector field $T \coloneqq JN$, the standard trace construction \cite[Thm 2.5, Thm I.2.11]{Navier} provides the continuous trace operators:
 $\langle\,\cdot\,,N\rangle\vert_{\partial M}:H(\mathrm{div})\to H^{-\frac{1}{2}}(\partial M)$ and $\langle\,\cdot\,,T\rangle\vert_{\partial M}:H(\mathrm{curl})\to H^{-\frac{1}{2}}(\partial M)$.
These traces satisfy the following Green's formulas for $V_1 \in H(\mathrm{div})$, $V_2 \in H(\mathrm{curl})$, and $f \in H^1$:
\begin{align*}
    &(\mathrm{div}(V_1),f)=-(V_1,\mathrm{grad}(f))+\langle\langle V_1,N\rangle ,f\rangle_{\partial M},
    \\&(\mathrm{curl}(V_2),f)=(V_2,\mathrm{rot}(f))+\langle\langle V_2,T\rangle ,f\rangle_{\partial M}.
\end{align*}
We denote the kernels of these trace operators by $H_0(\mathrm{div})\coloneqq \mathrm{ker}(\langle\,\cdot\,,N\rangle)$ and $H_0(\mathrm{curl})\coloneqq\mathrm{ker}(\langle\,\cdot\,,T\rangle)$.
It is well-known that these kernels coincide with the closures of $\Gamma_0$, i.e., $H_0(\mathrm{div}) = \overline{\Gamma_0}^{\Vert\cdot\Vert_{H(\mathrm{div})}}$ and $H_0(\mathrm{curl}) = \overline{\Gamma_0}^{\Vert\cdot\Vert_{H(\mathrm{curl})}}$ \cite[Thm I.2.6, Thm I.2.12]{Navier}.

Finally, we define the velocity spaces:
\begin{align*}
    &J_{L^2}\coloneqq\{V\in H(\mathrm{div})\vert~\mathrm{div}(V)=0~\mathrm{and}~\langle V, N\rangle\vert_{\partial M}=0\},
    \\&J_{H^1_0}\coloneqq\{V\in H^1_0(\Gamma)\vert~\mathrm{div}(V)=0\}.
\end{align*}
The notation for these spaces is motivated by the density results $J_{L^2} = \overline{\mathscr{V}}^{\Vert\cdot\Vert_{L^2}}$ and $J_{H^1_0} = \overline{\mathscr{V}}^{\Vert\cdot\Vert_{H^1}}$, where $\mathscr{V} \coloneqq \{V \in \Gamma_0 \mid \mathrm{div}(V)=0\}$ (see \Cref{thm:DecompH-1AndDensity}).

\paragraph{Bochner spaces.}
Let $(L^p_T(B), \Vert \cdot\Vert_{L^p_T(B)})$ denote the Bochner space of functions $f:(0,T) \to B$, where $T \in (0,\infty]$ and $B$ is a Banach space.
We adopt the notation $f_t \coloneqq f(t) \in B$ for $t \in (0,T)$.
To interpret equations holding almost everywhere in time $t$, we recall the following equivalence
\begin{align*}
    f=0\in L^p_T(B) \Leftrightarrow \mathrm{for~a.e.~t}\in(0,T):~\langle b',f_t\rangle=0,~\forall b'\in B'.
\end{align*}
where $\langle \cdot, \cdot \rangle$ denotes the duality pairing between $B'$ and $B$.

To handle initial conditions, we utilize the space $C([0,T]; B)$ of continuous functions $h:[0,T]\to B$.
For $T < \infty$, this space is equipped with the standard maximum norm $\Vert h\Vert_{C([0,T];B)}=\max_{t\in[0,T]}\Vert h_t\Vert$.
A key criterion for time continuity arises in the context of Gelfand triples $\mathcal{V} \subset \mathcal{H} \subset \mathcal{V}'$, where $\mathcal{V}$ and $\mathcal{H}$ are Hilbert spaces.
Specifically, if $V \in L^2_T(\mathcal{V})$ and its weak time derivative satisfies $\partial_t V \in L^2_T(\mathcal{V}')$, then it follows that $V \in C([0,T]; \mathcal{H})$ \cite[p.150-151]{NavierTime}.
Furthermore, under these regularity assumptions, the following Green's formula holds for any $V, W$ satisfying these conditions
\begin{align*}
    \int_0^T\left(\langle \partial_tV_t,W_t \rangle+\langle V_t,\partial_tW_t \rangle\right)\mathrm{d}t=(V_T,W_T)_{\mathcal{H}}-(V_0,W_0)_{\mathcal{H}}.
\end{align*}

\section{Riemannian geometry}
\label{sec:RiemGeo}
\begin{definition}[Covariant derivative on {$\Gamma^{(r,s)}$} {\cite[Lem 4.6]{Lee2}}]
    \label{def:connection}
    \nextline The Levi-Civita connection gives rise to the covariant derivative $\nabla:\Gamma^{(r,s)}\to\Gamma^{(r,s+1)}$, which is defined for $\alpha_1,\ldots,\alpha_r\in \Omega^1$ and $X,Y_1,\ldots,Y_s\in \Gamma$ by:
    \begin{align*}
        &{\textstyle(\nabla A)(\underbrace{\ldots}_{r~\mathrm{times}},X,\underbrace{\ldots}_{s~\mathrm{times}})\!\coloneqq\! (\nabla_XA)(\ldots)},
        \\&(\nabla_XA)(\alpha_1,\ldots,\alpha_r,Y_1,\ldots Y_s)\!\coloneqq\! X(A(\alpha_1,\ldots,\alpha_r,Y_1,\ldots Y_s))\!-\!{\textstyle\sum_{i=1}^{r+s}}A(\alpha_1,\ldots,\nabla_XW_i,\ldots, Y_s),
    \end{align*}
    where $W_i=\alpha_i$ for $i\leq r$ and $W_i=Y_{i-r}$ for $i>r$.
\end{definition}

\begin{definition}[Traces {\cite[p.13, p.28]{Lee2}}]
    \label{def:trace}
    \nextline The basis independent traces of $A\in T^{(1,1)}M$ and $B\in T^{(r,s)}M$ (for $s\geq 2$) are defined by:
    \begin{align*}
        &\mathrm{tr}(A)\coloneqq {\textstyle \sum_{k=1}^n}dx^k(A(\partial_k)),
        \\&\mathrm{tr}_{12}(B)={\textstyle \sum_{i,j=1}^ng^{ij}B(\underbrace{\ldots}_{r~\mathrm{times}},\partial_i,\partial_j,\underbrace{\ldots}_{s-2~\mathrm{times}})}.
    \end{align*}
    Notice that $\mathrm{tr}_{12}$ explicitly depends on the Riemannian metric $g$, whereas $\mathrm{tr}$ does not.
\end{definition}

\begin{defnpro}[Curvature {\cite[Lem 8.7]{Lee2}}]
    \label{def:curv}
    \nextline The curvature endomorphism is defined by $R(X,Y)\coloneqq\nabla_Y\nabla_X-\nabla_X\nabla_Y+\nabla_{[X,Y]}$ for $X,Y\in \Gamma$.
    This induces the curvature tensor $Rm(X,Y,Z,W)\coloneqq\langle R(X,Y)Z,W\rangle$ for $Z,W\in \Gamma$.
    Moreover on 2 dimensional manifolds the Gaussian curvature may be characterized as $\kappa\coloneqq Rm(E_1,E_2,E_1,E_2)$ for any local orthonormal frame $E_1,E_2$.
\end{defnpro}

\begin{definition}[Hodge operator on vector fields {\cite[p.105]{Jost}}]
    \label{def:Hodge*}
    \nextline Let ${*}_k$ denote the Hodge-${*}$-operator on differential $k$-forms.
    The Hodge operator $J$ acting on a vector field $X \in \Gamma$ is defined by: $J(X)\coloneqq\sharp({*}_1(\flat X))$.
\end{definition}

\begin{theorem}[Killing field]
\label{thm:KillingField}
\nextline The space of Killing fields $\mathcal{K}=\{K\in H^1(\Gamma)\vert ~D_K=0,~\langle K,N\rangle\vert_{\partial M}=0\}$ satisfies the following properties:
\begin{enumerate}[1.)]
    \item $\mathcal{K}\subset H^1(\Gamma)\cap J_{L^2}$. 
    \item For any $k \in \mathbb{N}$, the embedding $\mathcal{K} \hookrightarrow H^k(\Gamma)$ holds. In particular, $\mathcal{K} \subset \Gamma$, i.e., every Killing field is smooth.    
    \item If $\partial M \neq \emptyset$ and $K \in \mathcal{K}$ vanishes on the boundary (i.e., $K|_{\partial M}=0$), then $K=0$.
\end{enumerate}
\end{theorem}

\begin{proof}
\begin{enumerate}[1.)]
    \item This is a direct consequence of $\mathrm{div}(K)=\mathrm{tr}(\nabla K)=\mathrm{tr}(D_K)=0$.
    \item A detailed proof in local coordinates is provided in \cite[Lem 3]{priebe1994solvability}.
    Combining this local result with a partition of unity yields the global statement for surfaces.
    \item The proof proceeds similarly to \cite[Prop 6.2]{ShaoSimonettWilke}.
    First, one establishes that $K|_{\partial M} = 0$ implies $\nabla K|_{\partial M} = 0$.
    Next, we choose a surface $\Tilde{M}$ with smooth boundary such that $M \subset \Tilde{M}\subset W$.   
    Extending $K$ by zero to $\Tilde{M}$, we obtain $K \in H^1(\Gamma(\Tilde{M}))$.
    Thus, $K$ remains a Killing field on $\Tilde{M}$, and by property 2.), it follows that $K \in \Gamma(\Tilde{M})$.   
    Finally, \cite[Prop 8.1.4]{Petersen} states that a Killing field is uniquely determined by its value $K(p)$ and its covariant derivative $(\nabla K)(p)$ at any single point $p \in \Tilde{M}$.
    Choosing $p \in \partial M$, where both $K$ and $\nabla K$ vanish, we conclude that $K=0$ on $\Tilde{M}$, and consequently on $M$.
\end{enumerate}
\end{proof}
\end{document}


\maketitle

\section{Further results and proofs}
\label{sec:proofs}

\begin{lemma}[Poincar\'e inequality]
\label{lem:Poincare}
Let $\partial M\neq \emptyset$ and $\Sigma$ be a portion of $\partial M$ with strictly positive measure.
Then, for any $1 < p < \infty$, there exists a positive constant $C(p,\Sigma,M)$ such that every $A \in W^{1,p}$ vanishing on $\Sigma$ (i.e., $A\vert_\Sigma = 0$) satisfies
\begin{align}
    \Vert A\Vert_{L^p}\leq C(p,\Sigma,M)\Vert\nabla A\Vert_{L^p}.
    \label{eq:Poincare}
    \tag{P$_\leq$}
\end{align}
\end{lemma}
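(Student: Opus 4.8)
The plan is to argue by contradiction via a compactness argument of Rellich--Kondrachov type. Suppose that no admissible constant exists. Then for every $n \in \mathbb{N}$ there is some $A_n \in W^{1,p}$ with $A_n\vert_\Sigma = 0$, which after normalization satisfies $\Vert A_n\Vert_{L^p} = 1$, yet $\Vert \nabla A_n\Vert_{L^p} < 1/n$. In particular the sequence $(A_n)_n$ is bounded in $W^{1,p}$.

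Since $1 < p < \infty$, the space $W^{1,p}$ is reflexive, so after passing to a subsequence (not relabelled) we may assume $A_n \rightharpoonup A$ weakly in $W^{1,p}$ for some $A \in W^{1,p}$. Because $M$ is a compact manifold with boundary (equivalently, the ambient domain is bounded with sufficiently regular boundary), the embedding $W^{1,p} \hookrightarrow L^p$ is compact; hence $A_n \to A$ strongly in $L^p$, and therefore $\Vert A\Vert_{L^p} = 1$. On the other hand, by definition of weak convergence in $W^{1,p}$ we have $\nabla A_n \rightharpoonup \nabla A$ in $L^p$, while simultaneously $\nabla A_n \to 0$ in $L^p$; uniqueness of weak limits then forces $\nabla A = 0$, so $A$ is constant on every connected component of $M$. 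Assuming $M$ is connected (which is necessary for the statement to hold — a component disjoint from $\Sigma$ would otherwise provide an immediate counterexample), $A \equiv c$ for some constant $c$.

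It then remains to identify $c$. The set $V = \{B \in W^{1,p} : B\vert_\Sigma = 0\}$ is the kernel of the bounded trace operator $W^{1,p} \to L^p(\Sigma)$, hence a closed linear subspace, and therefore weakly closed. Since $A_n \in V$ and $A_n \rightharpoonup A$, we conclude $A \in V$, i.e. $c = A\vert_\Sigma = 0$ in $L^p(\Sigma)$; as $\Sigma$ has strictly positive measure this forces $c = 0$, contradicting $\Vert A\Vert_{L^p} = 1$. This contradiction yields the existence of $C(p,\Sigma,M)$ and proves \eqref{eq:Poincare}.

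I expect the only genuinely non-routine point to be supplying the compact embedding $W^{1,p} \hookrightarrow L^p$ and the boundedness of the trace map in the precise geometric setting of the main text (a compact Riemannian manifold with boundary, or the relevant Sobolev spaces introduced there); once these two ingredients are available, together with the elementary fact that a closed linear subspace is weakly closed, the argument above closes immediately. It is also worth recording explicitly the standing hypothesis that $M$ is connected, since the inequality fails without it.
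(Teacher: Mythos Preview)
Your argument is correct and follows essentially the same contradiction-plus-Rellich strategy as the paper; the only structural difference is that you pass through reflexivity and weak $W^{1,p}$-convergence, whereas the paper observes directly that the extracted subsequence is Cauchy in $W^{1,p}$ (the $L^p$-part by Rellich, the gradient part because $\nabla A_n\to 0$ strongly), which avoids invoking weak closedness of the trace kernel.

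One point to tighten: in the paper's setting $A$ is a section of a tensor bundle $T^{(r,s)}M$, so the phrase ``$A\equiv c$ for some constant $c$'' does not literally make sense. The paper handles this by noting that $\nabla A=0$ means $A$ is \emph{parallel}, whence the scalar $\langle A,A\rangle$ is constant on the connected manifold $M$; since $A\vert_\Sigma=0$ and $\Sigma$ has positive measure, this constant is zero, giving $A=0$ and the same contradiction. Your argument goes through verbatim once you replace ``$A$ is constant'' by this observation.
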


\begin{proof}
    This result is standard and can be found, for example, in \cite[Lem I.3.1]{Navier}.
    To prove the estimate \eqref{eq:Poincare}, we argue by contradiction.\\
    Assume the inequality does not hold.
    Then there exists a sequence $A_n \in W^{1,p}$ with $A_n\vert_\Sigma = 0$ such that $\Vert A_n\Vert_{L^p}=1$ and $\Vert \nabla A_n\Vert_{L^p}<\frac{1}{n}$.
    By Rellich's lemma, since the sequence is bounded in $W^{1,p}$, we can extract a subsequence that is Cauchy in $\Vert \cdot\Vert_{L^p}$.
    Combined with the convergence of the gradients to zero, this subsequence is Cauchy in $\Vert \cdot\Vert_{W^{1,p}}$.
    Its limit $A \in W^{1,p}$ must satisfy $\nabla A=0$ and $A\vert_\Sigma = 0$, but $\Vert A\Vert_{L^p}=1$.
    Since $\nabla A=0$ (i.e., $A$ is parallel) and $M$ is connected, the scalar function $\langle A,A\rangle$ must be constant.
    Because $A$ vanishes on $\Sigma$ (a set of strictly positive measure), this constant must be zero.
    Consequently, $A=0$ on $M$, which contradicts the normalization $\Vert A\Vert_{L^p}=1$.
\end{proof}

\begin{remark}
    A very similar argument establishes the Poincar\'e inequality on $W^{1,p}_0$ for $1\leq p<\infty$ in the case where $\partial M=\emptyset$ and $r=s=0$.
\end{remark}

\begin{proof}[Proof of \Cref{main:thm:GagNirenberg}]
    For bounded domains in $\mathbb{R}^2$ and scalar functions (i.e., $r=0=s$), the estimate \eqref{main:eq:GagNirenberg} was originally established in a generalized form in \cite{GagNirenberg1} and \cite{GagNirenberg2}.
    This result extends to $(r,s)$-tensor fields by applying the scalar estimate component-wise and using standard estimates for discrete norms.\\
    For surfaces, we combine this Euclidean tensor estimate with a partition of unity argument.
    For any $A \in \Gamma^{(r,s)}$, this initially yields an inequality involving the full Sobolev norm
    \begin{align*}
        \Vert A\Vert_{L^p} \leq C(p,\theta,M)\left(\Vert A\Vert_{W^{1,a}}^\theta\Vert A\Vert_{L^a}^{1-\theta}+\Vert A\Vert_{L^a}\right).
    \end{align*}
    Algebraic manipulations lead directly to the standard form presented in \eqref{main:eq:GagNirenberg}.
    Since smooth sections are dense in $W^{1,a}(\Gamma^{(r,s)})$, the result extends to the full Sobolev space by a standard density argument.
    Finally, the tightened estimate \eqref{main:eq:GagNirenbergPoincare} is a direct consequence of the Poincar\'e inequality (\Cref{lem:Poincare}).
\end{proof}

\begin{proof}[Proof of \Cref{main:cor:LsJL2ToJH10}]
    The inclusion ``$\subset$'' follows immediately from the Sobolev embedding theorem (\Cref{main:eq:SobEmbLp}), \Cref{main:lem:s&r} and Green's formula.\\ 
    For the inclusion ``$\supset$'', let $V \in L^s(\Gamma) \cap J_{L^2}$.
    By definition, $\mathrm{div}(V)=0$ and $\langle V,N\rangle\vert_{\partial M}=0$.
    Testing the variational equation with $\chi \in C^\infty_0$ implies that $\mathrm{curl}(V)=\omega \in L^2$.
    Since $H^1 \subset W^{1,r}$, we may test with arbitrary $\chi \in H^1$.
    Applying Green's formula for $H(\mathrm{curl})$ then yields $\langle\langle V, T \rangle,\chi\rangle_{\partial M} = 0$.
    By definition of $H^{1/2}(\partial M)$, this implies $\langle V,T\rangle=0$ in $H^{-1/2}(\partial M)$.
    Hence, Gaffney's inequality implies $V \in H^1_0(\Gamma)$, and thus $V \in J_{H^1_0}$.
    A final application of Green's formula, combined with the density of $C^\infty_0$ in $L^2$, confirms that the equation holds for all $\chi \in L^2$.
\end{proof}

\begin{proof}[Proof of $\vert V\vert^2\in L^2_T(W^{1,r})$ from \Cref{main:thm:reg(V.d)V}]
We begin with the case $\partial M\neq \emptyset$.
First, we verify that $\vert V\vert^2 \in L^2_T(L^2)$. Since $M$ is compact, this implies $\vert V\vert^2 \in L^2_T(L^r)$.
Applying the Gagliardo--Niremberg inequality (\Cref{main:eq:GagNirenbergPoincare}) with $p=4$ and $a=2$, we estimate
\begin{align*}
    \Vert \vert V\vert^2\Vert_{L^2_T(L^2)}&
    \leq C(M)\Vert \Vert \nabla V\Vert_{L^2}\Vert V\Vert_{L^2}\Vert_{L^2_T(\mathbb{R})}
    \leq C(M)\Vert V\Vert_{L^2_T(H^1)}\Vert V\Vert_{L^\infty_T(L^2)}.
\end{align*}
Next we prove that $\mathrm{grad}(\vert V\vert^2)\in L^2_T(L^r)$.
Let $s \in (2,\infty)$ be the dual exponent of $r$ (i.e., $\frac{1}{s}+\frac{1}{r}=1$) and choose $q\in (2,\infty)$ such that $\frac{1}{q}=\frac{1}{2}-\frac{1}{s}$ (which ensures $\frac{1}{r}=\frac{1}{2}+\frac{1}{q}$).
Applying Hölder's inequality yields
\begin{align*}
    \Vert \mathrm{grad}(\vert V\vert^2)\Vert_{L^2_T(L^r)}&\leq 2\left\Vert\Vert \nabla V\Vert_{L^2} \Vert V\Vert_{L^q}\right\Vert_{L^2_T(\mathbb{R})},
\end{align*}
We observe that the regularity $V \in L^2_T(J_{H^1_0}) \cap L^\infty_T(J_{L^2})$ alone is insufficient to bound the remaining term.
However, we can exploit the additional regularity of $\mathrm{curl}(V)$ combined with Gaffney's inequality (\Cref{main:eq:GaffneyIneq}).
Applying the Gagliardo--Niremberg inequality (\Cref{main:eq:GagNirenbergPoincare}) with $p=q$ and $a=2$ yields
\begin{align*}
    \Vert V\Vert_{L^q}\leq C(M)\Vert \nabla V\Vert_{L^2}^{\frac{2}{s}}\Vert V\Vert_{L^2}^{1-\frac{2}{s}}\leq C(M)\Vert \mathrm{curl}(V)\Vert_{L^2}^{\frac{2}{s}}\Vert V\Vert_{L^2}^{1-\frac{2}{s}},
\end{align*}
where the second inequality follows from Gaffney's estimate.
Substituting this into our previous bound, we obtain
\begin{align}
    \Vert \Vert \nabla V\Vert_{L^2} \Vert V\Vert_{L^q}\Vert_{L^2_T(\mathbb{R})}&\leq C(M)\left\Vert \Vert \mathrm{curl}(V)\Vert_{L^2}^{1+\frac{2}{s}}\Vert V\Vert_{L^2}^{1-\frac{2}{s}}\right\Vert_{L^2_T(\mathbb{R})}\nonumber
    \\&\leq C(M)\left\Vert \Vert \mathrm{curl}(V)\Vert_{L^2}^{1+\frac{2}{s}}\right\Vert_{L^2_T(\mathbb{R})}\Vert V\Vert_{L^\infty_T(J_{L^2})}^{1-\frac{2}{s}}.
    \label{proof:1+2s}
\end{align}
Finally, to bound the term $\Vert \mathrm{curl}(V)\Vert_{L^2}$, we utilize the hypothesis $\mathrm{curl}(V) \in L^2_T(W^{1,r}) \cap L^\infty_T(L^r)$.
A final application of \Cref{main:eq:GagNirenbergPoincare} with $p=2$ and $a=r$ gives
\begin{align*}
    \Vert \mathrm{curl}(V)\Vert_{L^2}\leq C(M)\Vert \mathrm{curl}(V)\Vert_{W^{1,r}}^{1-\frac{2}{s}}\Vert \mathrm{curl}(V)\Vert_{L^{r}}^{\frac{2}{s}}.
\end{align*}
Combining this inequality with \eqref{proof:1+2s} yields
\begin{align*}
    \left\Vert \Vert \mathrm{curl}(V)\Vert_{L^2}^{1+\frac{2}{s}}\right\Vert_{L^2_T(\mathbb{R})}&\leq C(s,M)\left\Vert \left(\Vert \mathrm{curl}(V)\Vert_{W^{1,r}}^{1-\frac{2}{s}}\Vert \mathrm{curl}(V)\Vert_{L^{r}}^{\frac{2}{s}}\right)^{1+\frac{2}{s}}\right\Vert_{L^2_T(\mathbb{R})}
    \\&\leq C(s,M)\left\Vert\Vert \mathrm{curl}(V)\Vert_{W^{1,r}}^{1-\frac{4}{s^2}}\right\Vert_{L^2_T(\mathbb{R})}\Vert \mathrm{curl}(V)\Vert_{L^\infty_T(L^{r})}^{\frac{2}{s}+\frac{4}{s^2}}
    \\&\leq C(s,M)\Vert \mathrm{curl}(V)\Vert_{L^2_T(W^{1,r})}^{1-\frac{4}{s^2}}\Vert \mathrm{curl}(V)\Vert_{L^\infty_T(L^{r})}^{\frac{2}{s}+\frac{4}{s^2}}.
\end{align*}
This concludes the proof for the case where $\partial M\neq \emptyset$.

If $\partial M=\emptyset$, the proof proceeds analogously.
The only modification required is the use of the general Gaffney inequality (\Cref{main:eq:GenGaffneyIneq}) and the general Gagliardo--Nirenberg inequality (\Cref{main:eq:GagNirenberg}), rather than their simplified counterparts (\Cref{main:eq:GaffneyIneq,main:eq:GagNirenbergPoincare}).
This introduces additional terms involving $\Vert V\Vert_{L^2}$ and $\Vert \mathrm{curl}(V)\Vert_{L^r}$.
However, since $V\in L^\infty_T(J_{L^2})$ and $\mathrm{curl}(V)\in L^\infty_T(L^r)$, these terms remain bounded.

Finally, we prove that $\nabla_VV\in L^2_T(L^r(\Gamma))$.
By \refwl{main:lem:s&r} and the fact that $V\in L^\infty_T(J_{L^2})$, we deduce that the product $\mathrm{curl}(V) JV$ belongs to $L^2_T(L^r(\Gamma))$.
Combining this observation with \refwl{main:lem:Nonlinterm} (which relates $\nabla_V V$ to $\mathrm{grad}\vert V\vert^2$ and the $\mathrm{curl}$ term) and the established regularity $\vert V\vert^2\in L^2_T(W^{1,r})$, we conclude that $\nabla_VV\in L^2_T(L^r(\Gamma))$.
\end{proof}

\begin{proof}[Proof of \Cref{main:prop:pres}]
The proof follows the construction in \cite[p.160]{NavierTime}.  
To define the pressure, we first introduce the functional $L_t \in H^{-1}(\Gamma)$ for each $t\in[0,T]$ by
\begin{align*}
    \langle L_t, W\rangle\coloneqq\int_0^t\left(\langle F_s,W\rangle-(\nabla_{V_s}V_s,W)-2\nu(D_{V_s},\nabla W)\right)\mathrm{d}s-(V_t-V_0,W),
\end{align*}
for all $W \in H^1_0(\Gamma)$.
Using the momentum equation (\Cref{main:eq:dtVNavStokes}, or \eqref{main:eq:dtVEuler} if $\nu=0$) and Green's formula, we observe that $\langle L_t, W\rangle = 0$ whenever $\mathrm{div}(W)=0$ (i.e., for all $W \in J_{H^1_0}$).
Consequently, by \Cref{main:thm:DecompH-1AndDensity}, there exists a unique scalar function $h_t \in L^2_0$ such that $\langle L_t,W\rangle=-(h_t,\mathrm{div}(W))$ for all $W \in H^1_0(\Gamma)$.
We then define the unique (distributional) pressure $p\in (C^\infty_0(M\times(0,T)))'$ as $p \coloneqq\frac{\mathrm{d}}{\mathrm{d}t}h$.
A straightforward computation verifies that $p$ satisfies \Cref{main:eq:DistribP}, where the operators $\mathrm{grad}$, $\delta$, and $\frac{\mathrm{d}}{\mathrm{d}t}$ are understood in the distributional sense.
Now assume $\partial_t V \in L^2_T(H^{-1}(\Gamma))$. We claim that this implies $p \in L^2_T(L^2_0)$.
To verify this, we define the functional $\tilde{L}_t \in H^{-1}(\Gamma)$ for a.e. $t \in (0,T)$ by
\begin{align*}
    \langle \tilde{L}_t, W\rangle\coloneqq \langle F_t-\partial_tV,W\rangle-(\nabla_{V_t}V_t,W)-2 \nu(D_{V_t},\nabla W),
\end{align*}
for all $W \in H^1_0(\Gamma)$.
Analogous to the previous argument, the weak formulation implies that $\langle \tilde{L}_t, W\rangle = 0$ for all $W \in J_{H^1_0}$.
By \Cref{main:thm:DecompH-1AndDensity}, for a.e. $t$, there exists a unique $p_t \in L^2_0$ such that $\langle \tilde{L}_t, W\rangle = -(p_t, \mathrm{div}(W))$.
This confirms that \Cref{main:eq:NavStokesP} holds.\\
Finally, to establish that $p \in L^2_T(L^2_0)$, we recall that the divergence operator restricts to a topological isomorphism $\mathrm{div}:(J_{H^1_0})^\perp \to L^2_0$ (\Cref{main:thm:divIsom}).
Since the functional $\tilde{L}$ belongs to $L^2_T(H^{-1}(\Gamma))$ (following estimates similar to those in \Cref{main:lem:VeloForm}), the isomorphism property ensures that $p$ inherits this time regularity.
\end{proof}

\section{Mixed boundary conditions}
\label{sec:MixedBoundCond}
In this section, we briefly derive and analyze more general mixed boundary conditions for the vorticity formulation of the Navier--Stokes equations.
We accomplish this by carefully deducing the weak vorticity formulation directly from the strong velocity formulation (\Cref{main:eq:V-pstrong}).
For the sake of brevity, we do not address existence or regularity results for these specific boundary conditions here.

First, we must ensure that the velocity and pressure spaces are compatible.
Recall that the weak pressure gradient term can be expanded as $(\mathrm{grad}(p),W)=-(p,\mathrm{div}(W))+(p,\langle W, N\rangle)_{\partial M}$.
We require that for any test function $W$ in the homogeneous velocity space, these boundary terms vanish or are independent of the unknown pressure $p$.
To achieve this, we partition the boundary into two disjoint admissible patches $\partial M=\partial M_p\sqcup\partial M_N$ (following \cite[Def 3.7]{gol2011hodge}) and impose boundary conditions on $p$ along $\partial M_p$ and on the normal velocity $\langle V, N\rangle$ along $\partial M_N$.
Accordingly, we define the homogeneous velocity space as
$$J_{H^1_0, \partial M_N} \coloneqq \{V \in H^1(\Gamma) \mid \mathrm{div}(V)=0, \, \langle V, N\rangle\vert_{\partial M_N} = 0\}.$$
Since the velocity space satisfies boundary conditions only on a portion of the boundary, we require a generalized Hodge decomposition to construct our vorticity formulation, analogous to \Cref{main:thm:EquiForm}.
We utilize the Hodge decomposition for mixed boundary conditions from \cite[Thm 4.3]{gol2011hodge}, which provides the following unique $L^2$-orthogonal decomposition
\begin{align}
    L^2(\Gamma)=\mathrm{grad}(H^1_{0,\partial M_p})\oplus \mathrm{rot}(H^1_{0,\partial M_N})\oplus H_{N,T}.
    \label{eq:mixedHodge}
\end{align}
Here, for any admissible boundary patch $\Sigma \subset \partial M$, we define the potential space $$H^1_{0,\Sigma}\coloneqq\begin{cases}\{\varphi\in H^1\vert~\varphi\vert_{\Sigma}=0\}&\text{if}~\Sigma\neq\emptyset,\\H^1\cap L^2_0&\text{if}~\Sigma=\emptyset\end{cases}.$$
The space of harmonic fields $H_{N,T}$ is defined as
\begin{align*}
    &H_{N,T}\coloneqq\left\{X\in H(\mathrm{div})\cap H(\mathrm{curl})\vert~\mathrm{div}(X)=0=\mathrm{curl}(X),~\langle Z,T\rangle\vert_{\partial M_p}=0=\langle Z,N\rangle\vert_{\partial M_N}\right\}.
\end{align*}
As before, $H_{N,T}$ is a finite-dimensional space.\\
Notice that if $\partial M_N = \partial M$ (implies $\partial M_p = \emptyset$), we recover the standard Hodge decomposition for $L^2(\Gamma)$ from \Cref{main:thm:HodgeDecomp}.
Furthermore, the corresponding decomposition for $H^k(\Gamma)$ follows from regularity results for the Laplace equation with mixed boundary conditions.

With this Hodge decomposition in hand, we derive the weak vorticity formulation from \Cref{main:eq:V-pstrong} by following the steps of the symbolic proof for \refwl{main:thm:EquiForm}.
However, deriving the analog of \Cref{main:eq:001} involves integration by parts, which introduces a new boundary term.
This additional term necessitates the imposition of further boundary conditions to close the system.
To address this, we again partition the boundary into two disjoint admissible patches $\partial M=\partial M_\omega\sqcup\partial M_T$, imposing boundary conditions on the vorticity $\omega$ along $\partial M_\omega$ and on the tangential velocity $\langle V, T\rangle$ along $\partial M_T$.

Finally, we summarize the system incorporating all these boundary conditions.
Let the boundary data $(g_p,g_N,g_\omega,g_T)$ be given in the trace space $H^{\frac{1}{2}}(\partial M_p)\times H^{-\frac{1}{2}}(\partial M_N)\times H^{\frac{1}{2}}(\partial M_\omega)\times H^{-\frac{1}{2}}(\partial M_T)$, corresponding to the conditions:
$$p\vert_{\partial M_p} = g_p, \quad \langle V,N\rangle\vert_{\partial M_N} = g_N, \quad \omega\vert_{\partial M_\omega} = g_\omega, \quad \langle V,T\rangle\vert_{\partial M_T} = g_T.$$
We decompose the pressure and vorticity using liftings $p=p_{\partial M_p}+\tilde{p}$ and $\omega=\omega_{\partial M_\omega}+\tilde{\omega}$, where $p_{\partial M_p}, \omega_{\partial M_\omega}\in H^1$ satisfy the respective boundary conditions.
Similarly, we decompose the velocity as $V=V_{\partial M_N}+\tilde{V}$.
Here, $V_{\partial M_N} \coloneqq \mathrm{grad}(q)$ is defined via the unique potential $q \in H^1_{0,\partial M_p}$ that solves the mixed boundary value problem $\Delta_{\mathrm{dR}}q = 0$ with $\langle \mathrm{grad}(q), N \rangle\vert_{\partial M_N} = g_N$.
Applying the mixed Hodge decomposition \eqref{eq:mixedHodge} to the remainder $\tilde{V}$ (which has vanishing normal trace on $\partial M_N$), we write $\tilde{V} = \mathrm{rot}(\tilde{\psi}) + \tilde{H}$ with $\tilde{\psi} \in H^1_{0,\partial M_N}$ and $\tilde{H} \in H_{N,T}$.

Assuming sufficient regularity for the unknowns $\tilde{\psi}, \tilde{H}, \tilde{\omega}$, and $\tilde{p}$, the system satisfies the following variational equations for a.e. $t \in (0,T)$:
\begin{align*}
    (\tilde{\omega},\chi)=~&(V,\mathrm{rot}(\chi))-(\omega_{\partial M_\omega},\chi)+(g_T,\chi)_{\partial M_T},
    \\(\mathrm{rot}(\partial_t\tilde{\psi}),\mathrm{rot}(\varphi)+(\partial_tH,Z)=~&(F-\omega JV-\nu \mathrm{rot}(\omega)+2\nu \kappa V,\mathrm{rot}(\varphi)+Z)
    \\~&+\frac{1}{2}(\vert V\vert^2,\langle\mathrm{rot}(\varphi)+Z,N\rangle)_{\partial M_p},
    \\(\mathrm{grad}(\tilde{p}^*),\mathrm{grad}(q))=~&(F-\omega JV-\nu \mathrm{rot}(\omega)+2\nu \kappa V,\mathrm{grad}(q)),
\end{align*}
for all test functions $(\chi,\varphi,Z,q)\in H^1_{0,\partial M_\omega}\times H^1_{0,\partial M_N}\times H_{N,T}\times H^1_{0,\partial M_p}$.
In the final equation, the modified pressure is defined as $\tilde{p}^*\coloneqq \tilde{p}+\frac{\vert V\vert^2}{2}-\hat{p}\in H^1_{0,\partial M_p}$.
Here, $\hat{p} \in H^1$ is the harmonic extension ($\Delta_{\mathrm{dR}}\hat{p}=0$) of the boundary data $\hat{p}\vert_{\partial M_p}=\frac{1}{2}\vert V\vert^2\vert_{\partial M_p}$ if $\partial M_p\neq\emptyset$, or $\hat{p}=\frac{\Vert V\Vert_{L^2}^2}{2\mathrm{Vol}(M)}$ if $\partial M_p = \emptyset$.

Finally, we relate the boundary conditions for $p$ and $\omega$ derived above to those for the Cauchy stress tensor $\sigma(V,p)\coloneqq-pI+2\nu D_V$, which frequently appears in velocity formulations.
This relationship is established via the following identities for the deformation tensor on a admissible boundary patch $\Sigma$:
\begin{align*}
    &(D_V(N),W)_\Sigma=\frac{1}{2}(\omega,\langle W,T\rangle)_\Sigma+(L_{\Sigma}(V),W)_\Sigma \mathrm{,~if~}\langle V,N\rangle\vert_{\Sigma}=0=\langle W,N\rangle\vert_{\Sigma},
    \\&(D_V(N),W)_\Sigma=(L_{\Sigma}(JV),JW)_\Sigma \mathrm{,~if~}\langle V,T\rangle\vert_{\Sigma}=0=\langle W,T\rangle\vert_{\Sigma}.
\end{align*}
Here, $L_{\Sigma}$ denotes the Weingarten tensor (or shape operator) of the boundary curve $\Sigma$, regarded as a section of $T^{(1,1)}\Sigma$.
It is defined by $L_{\Sigma}(V) \coloneqq -\langle V, \nabla_T N\rangle T$, which can be expressed in terms of the geodesic boundary curvature $\kappa_g$ as $L_{\Sigma}(V) = -\kappa_g \langle V, T\rangle T$. 

\section{Details on numerical examples}
\paragraph{Schäfer Turek benchmark}
The computational domain is the rectangle with a circular obstacle, defined as $M=[0,2.2]\times [0,0.41]\setminus B_{0.05}(0.2, 0.2)$.
At the inlet boundary $\partial M_{\mathrm{In}}=\{0\}\times [0, 0.41]$, we prescribe a parabolic velocity profile $V_0(y) = 4V_m y(0.41-y)/0.41^2$ with a maximum velocity of $V_m=1.5$, as illustrated in \Cref{main:fig:TurSchäfer}.
The flow evolution is determined by the Navier--Stokes equations with viscosity $\nu=0.001$ and vanishing external force ($F=0$).

Regarding boundary conditions, we impose homogeneous Dirichlet conditions ($V=0$) on the circular obstacle and the channel walls. 
At the outlet boundary $\partial M_{\mathrm{out}}=\{2.2\}\times [0,0.41]$, we enforce a vanishing tangential velocity $\langle V,T\rangle=0$ and zero pressure $p=0$.
Given the geometry and the condition $\langle V,T\rangle=0$, this pressure condition is equivalent to $\langle \sigma(V,p)N,N\rangle=0$, where $\sigma(V,p)=-pI+2\nu D_V$ denotes the Cauchy stress tensor, discussed in \Cref{sec:MixedBoundCond}.

To address the topological complexity and the mixed boundary conditions of this benchmark, we rely on the generalized formulation derived in \Cref{sec:MixedBoundCond}.
This theoretical framework, based on \cite{gol2011hodge}, explicitly provides the machinery to realize the necessary boundary conditions for the pressure, velocity, and vorticity as discussed above.

The numerical implementation adheres to the strategy outlined in the \Cref{main:sec:AbstractSetting}.
The primary adaptation involves the definition of the discrete scalar spaces: by adjusting the boundary regions where homogeneous conditions are enforced, we construct the specific discrete Hodge decomposition required for the mixed boundary constraints.
Consequently, the spatial discretization and explicit Euler time integration proceed as described in \Cref{main:sec:AbstractSetting} without further algorithmic changes.

However, enforcing the tangential boundary condition $\langle V,T\rangle = 0$ at the outlet warrants detailed attention, particularly on coarser meshes.
Due to the asymmetric positioning of the obstacle, a \emph{Kármán vortex street} develops and convects towards the outlet (see \Cref{main:fig:TurSchäfer}).
Since these vortices naturally carry significant tangential velocity components, abruptly forcing $\langle V,T\rangle$ to zero creates a strong discontinuity in the tangential velocity across the boundary elements.
This, in turn, induces spurious oscillations in the vorticity field near the outlet.
To mitigate these artifacts, we introduce the local $\mathcal{O}(h^2)$ coupling term described in the main text.

Next, we introduce the quantitative metrics used to evaluate the benchmark.
The problem configuration is characterized by the cylinder diameter $L=0.1$.
Given the kinematic viscosity $\nu=0.001$ and the mean inflow velocity $V_{\mathrm{mean}}=1$, the Reynolds number is $\Rey=\frac{V_{\mathrm{mean}}L}{\nu}=100$.

We are primarily interested in the hydrodynamic forces acting on the obstacle boundary $\partial M_{\circ}$.
The total force is defined by the surface integral of the stress tensor, $F_\circ = \int_{\partial M_\circ}\sigma(V,p)N\mathrm{d}s$.
Using the no-slip condition $V|_{\partial M_\circ}=0$, this expression simplifies to
$F_\circ=\int_{\partial M_\circ}(-pN+\nu\omega T)\mathrm{d}s$,
where $N$ and $T$ denote the unit normal and tangent vectors, respectively.
The drag force $F_D$ and lift force $F_L$ correspond to the $x$- and $y$-components of $F_\circ$.
From these, the dimensionless drag and lift coefficients are derived as
\begin{equation*}
    C_D = \frac{2 F_D}{V_{\mathrm{mean}}^2 L} \quad \text{and} \quad C_L = \frac{2 F_L}{V_{\mathrm{mean}}^2 L}.
\end{equation*}
After an initialization period of approximately $t=10$, the flow settles into a periodic regime where the lift coefficient $C_L$ oscillates regularly.
The Strouhal number is then computed from the shedding frequency $f$ as $\mathrm{St} = \frac{Lf}{V_{\mathrm{mean}}}$.

\paragraph{Kelvin--Helmholtz instability on a torus}
We define the surface $M$ as a torus symmetric about the $y$-axis (lying in the $xz$-plane), parameterized by
$$M = \left\{x\in\mathbb{R}^3\middle|~x(\theta,\varphi)=
\begin{pmatrix}(2+\cos\varphi)\cos\theta\\\sin\varphi\\(2+\cos\varphi)\sin\theta\end{pmatrix},~\theta,\varphi\in[0, 2\pi)\right\}.$$
The dynamics are driven entirely by the initial velocity $V_0$, which is constructed in Cartesian coordinates as
\begin{equation*}
    V_0(x,y,z)=V_\infty \tanh\left(\frac{y}{\delta_0}\right)\begin{pmatrix}z\\0\\-x\end{pmatrix}+c_n r(x,z)\exp\left(-\frac{y^2}{\delta_0^2}\right)\mathrm{rot}(\psi_0)(x,y,z).
\end{equation*}
Here, $V_\infty=1$ denotes the reference velocity magnitude, $\delta_0 = 0.2$ is the initial vorticity thickness, and $c_n = 0.02$ is a noise scaling factor. 
The term $r(x,z) = \sqrt{x^2+z^2}$ denotes the radial distance in the $xz$-plane.
Based on these parameters, the Reynolds number is computed using the characteristic length scale $\delta_0$, the velocity scale $V_\infty$, and the viscosity, yielding $\Rey=\frac{\delta_0V_\infty}{\nu}=200$.
The perturbation is generated by the streamfunction $\psi_0(\theta)=\cos(4\theta)$, where $\theta$ corresponds to the toroidal angle in the parameterization above.

\bibliographystyle{plain}
\bibliography{sources/biblio}